\documentclass{amsart}
\usepackage{amsmath,amsthm}
\usepackage{amsfonts,amssymb}

\usepackage{color}

\usepackage{enumerate}

\hfuzz1pc


\newtheorem{thm}{Theorem}[section]
\newtheorem{cor}[thm]{Corollary}
\newtheorem{lem}[thm]{Lemma}
\newtheorem{prop}[thm]{Proposition}
\newtheorem{exam}[thm]{Example}

\newtheorem{defn}[thm]{Definition}

\theoremstyle{remark}
\newtheorem{rem}[thm]{Remark}

 
\def\bnu{{\boldsymbol{\large {\nu}}}} 
\def\bmu{{\boldsymbol{\large {\mu}}}} 
\def\bka{{\boldsymbol{\large {\kappa}}}} 
\def\bom{{\boldsymbol{\large {\omega}}}} 
\def\bal{{\boldsymbol{\large {\alpha}}}} 

 \def\a{{\alpha}}
 \def\b{{\beta}}
 \def\g{{\gamma}}
 \def\k{{\kappa}}
 \def\t{{\theta}}
 \def\l{{\lambda}}
 \def\d{{\delta}}
 
 \def\s{{\sigma}}
 \def\la{{\langle}}
 \def\ra{{\rangle}}
 \def\ve{{\varepsilon}}

 \def\hs{{\mathsf h}}
 \def\ks{{\mathsf k}}
 \def\sB{{\mathsf B}}
 \def\sC{{\mathsf C}}
 
 \def\sH{{\mathsf H}}
 \def\sK{{\mathsf K}}

 \def\sQ{{\mathsf Q}}
 \def\sR{{\mathsf R}}

 \def\xb{{\mathbf x}}
 \def\yb{{\mathbf y}}

 \def\CH{{\mathcal H}}

 \def\CV{{\mathcal V}}
 \def\BB{{\mathbb B}}

 \def\NN{{\mathbb N}}
 \def\PP{{\mathbb P}}
 \def\QQ{{\mathbb Q}}
 \def\RR{{\mathbb R}}
 \def\SS{{\mathbb S}}
 \def\ZZ{{\mathbb Z}}

\def\dim{\operatorname{dim}}

\newcommand{\wh}{\widehat}

\def\f{\frac}
\def\one{{\mathbf{1}}}       
\def\brho{{\boldsymbol{\rho}}}

\def\xt{\tilde x}
\def\bet{\tilde \beta}
\def\rt{\tilde \rho}
\def\nut{\tilde \nu}
\def\mut{\tilde \mu}
\newcommand{\fFt}[7]{{}_4F_3\left(\begin{matrix} #1 , #2, #3, #4 \\
#5, #6, #7 \end{matrix}\,; 1\right)}

\begin{document}

\title[Connection coefficients for orthogonal polynomials]
{Connection coefficients for classical orthogonal polynomials of several variables}

\date{January 20, 2017}

\author{Plamen~Iliev}
\address{P.~Iliev, School of Mathematics, Georgia Institute of Technology, 
Atlanta, GA 30332--0160, USA}
\email{iliev@math.gatech.edu}
\thanks{The first author is partially supported by Simons Foundation Grant \#280940.}

\author{Yuan~Xu}
\address{Y.~Xu, Department of Mathematics, University of Oregon, Eugene, 
OR 97403--1222, USA}
\email{yuan@uoregon.edu}
\thanks{The second author is partially supported by NSF grant \#1510296}

\keywords{Jacobi polynomials, simplex, Hahn, Racah, Krawtchouk, connection coefficients, several variables}
\subjclass[2010]{33C50, 33C70, 42C05}

\begin{abstract}
Connection coefficients between different orthonormal bases satisfy two discrete orthogonal relations themselves. For classical 
orthogonal polynomials whose weights are invariant under the action of the symmetric group, connection coefficients between a 
basis consisting of products of hypergeometric functions and another basis obtained from the first one by applying a 
permutation are studied. For the Jacobi polynomials on the simplex, it is shown that the connection coefficients can be 
expressed in terms of Tratnik's multivariable Racah polynomials and their weights. This gives, in particular, a new interpretation 
of the hidden duality between the variables and the degree indices of the Racah polynomials, which lies at the heart of their 
bispectral properties. These techniques also lead to explicit formulas for connection coefficients of Hahn and Krawtchouk polynomials 
of several variables, as well as for orthogonal polynomials on balls and spheres.
\end{abstract}
 \maketitle

\section{Introduction}
\setcounter{equation}{0}

For a positive measure $\rho$ defined on $\RR^d$ that satisfies some mild assumptions, the space 
$\CV_n^d$ of orthogonal polynomials of degree $n$ in $d$ variables with respect to the inner product
$$  
  \la f,g\ra = \int_{\RR^d} f(x) g(x) d\rho(x)
$$ 
has dimension $\dim \CV_n^d = \binom{n+d-1}{n}$, where $0\neq P \in \CV_n^d$ if $P$ is a polynomial of degree $n$ and
$\la P, Q\ra =0$ for all polynomials $Q$ of degree less than $n$. The elements of a basis $\{P_\nu: \nu \in \NN_0^d, |\nu| =n\}$ for 
$\CV_n^d$ may not be orthogonal among themselves.  A basis whose elements are orthogonal to each other is called orthogonal, that is,  $\la P_\nu, P_\mu\ra = 0$ for $\nu \ne \mu$, and it is called orthonormal if, in addition, 
$\la P_\nu, P_\nu\ra = 1$. 
 
If $\PP_n: = \{P_\nu: \nu \in \NN_0^d, |\nu| =n\}$ and $\QQ_n:=\{Q_\nu: \nu \in \NN_0^d, |\nu| =n\}$ are two 
orthogonal bases of $\CV_n^d$, then we can express one in terms of the other; for example, 
\begin{equation}\label{eq:QccP}
   Q_\nu = \sum_{|\mu|=n} c_{\nu,\mu} P_\mu, \qquad |\nu| = n, \quad c_{\nu,\mu} \in \RR.
\end{equation}
We call the coefficients $c_{\nu,\mu}$ {\it connection coefficients} of $\QQ_n$ in terms of $\PP_n$. 
Throughout the paper we adopt the convention of using $\wh P_\nu=P_\nu/||P_\nu||$ to denote the orthonormal polynomial 
when $P_\nu$ is an orthogonal polynomial, and we denote by $\wh c_{\nu,\mu}$ the {\it normalized} 
connection coefficients between two orthonormal bases. Thus, \eqref{eq:QccP} can be rewritten as 
\begin{equation*}
   \wh Q_\nu = \sum_{|\mu|=n} \wh c_{\nu,\mu} \wh P_\mu, \qquad |\nu| = n.
\end{equation*}
If both $\{P_\mu\}$ and $\{Q_\nu\}$ are orthonormal bases, the matrix $\mathbf{C}:=(\wh c_{\nu,\mu})$ is orthogonal 
(\cite[p.\ 67]{DX}) and, therefore, satisfies 
\begin{equation}\label{eq:cc-2op}
  \sum_{|\omega| = n} \wh c_{\nu,\omega} \wh c_{\mu,\omega} = \delta_{\nu,\mu} \quad \hbox{and}\quad 
   \sum_{|\omega| = n}\wh  c_{\omega,\nu}\wh c_{\omega,\mu} = \delta_{\nu,\mu}.
\end{equation} 
We are interested in identifying the connection coefficients for classical orthogonal 
polynomials of both continuous and discrete variables. Especially interesting is the case when $c_{\nu,\mu}$ 
can be expressed explicitly in terms of discrete orthogonal polynomials and their weights.

Our starting point is the Jacobi polynomials of two variables that are orthogonal with respect to the weight function 
$x^\a y^\b(1-x-y)^\g $ on the triangle $T^2 =\{(x,y): x\ge 0, y \ge 0,1-x-y\ge0\}$. For this weight function, one 
orthonormal basis of $\CV_n^2$ consists of polynomials $P_j(x,y):= P_{n-j,j}^{\a,\b,\g}(x,y)$ for $0\le j \le n$ given 
in terms of the classical Jacobi polynomials. Another orthonormal basis consists of 
$Q_j(x,y):=P_{n-j,j}^{\b,\g,\a}(y, 1-x-y)$, obtained from $P_j$ by applying the permutation $(123)$ on $(\a,\b,\g)$ 
and $(x,y,1-x-y)$ simultaneously. It was shown in \cite{Du2} that the connection coefficients in this case are 
expressed in terms of the Racah polynomials (see Section \ref{se3})  by taking a limit of the 
corresponding results for Hahn polynomials, whereas the proof of the latter result in \cite{Du1} uses 
specific irreducible representations of the symmetric group.

We start with an elementary proof of the result in \cite{Du2} for two variables and then we consider the connection 
coefficients for the Jacobi polynomials of arbitrary number of variables on the simplex for bases generated by elements 
of the symmetric group. The picture in several variables quickly becomes much more complicated. The constructions 
require new ingredients and the results lead to interesting phenomena. For instance, in the case of three variables, the 
connection coefficients can be expressed by the Racah polynomials, the Racah polynomials of two variables defined
in \cite{Tr2}, or by the sum of products of such polynomials, depending on what permutation is applied in obtaining 
the second basis. Moreover, in the case of the Racah polynomials of two variables, the dual orthogonality provides 
a natural interpretation of a subtle duality between the degree indices and the variables, which lies at the heart of the 
bispectral property studied in \cite{GI}. Indeed, in our framework, the duality amounts to transposing the orthogonal 
matrix $\mathbf{C}$ and comparing the orthogonality relations in \eqref{eq:cc-2op}. All this is trivial for the one variable 
Racah polynomials, for which the symmetry between the variable and the degree index follows immediately from the 
explicit hypergeometric representation.

From the results about the Jacobi polynomials on the simplex, connection coefficients for several other families 
of classical orthogonal polynomials can be derived. First of all, using the Jacobi polynomials on the simplex as 
generating functions of the Hahn polynomials \cite{KM,X14}, the connection coefficients for the Hahn 
polynomials of several variables and their permutations can be derived from those for the Jacobi polynomials
on the simplex. Secondly, taking an appropriate limit (\cite{IX07,X14}), we can derive explicit formulas for the 
connection coefficients of the Krawtchouk polynomials of several variables, which can be expressed by Krawtchouk 
polynomials with different parameters. Finally, it is known that the orthogonal polynomials on the unit ball and the 
unit sphere and those on the simplex are related, which can be used to derive connection coefficients for orthogonal 
bases on these domains. In particular, this leads to explicit formulas for the connection coefficients for orthogonal bases
given in Cartesian coordinates and in polar coordinates, respectively, on the unit ball. 

It should be pointed out that this work is connected to several other areas. The explicit formulas of the connection 
coefficients for the Hahn polynomials of two variables in \cite{Du1} and those for the Jacobi polynomials on the 
simplex in \cite{Du2} were re-proved in \cite{KVdJ} from Lie and quantum algebra representations; see also \cite{LVdJ} 
and the references therein along this line. Classical multivariable orthogonal polynomials have also appeared recently 
in a large body of work around the quantum singular oscillator model and, more generally, quantum systems, which 
appear to share many common points with our work, see \cite{GV,GVZ} and reference therein. In the language 
of representation theory and mathematical physics, our connection coefficients share common trait with the 
Clebsch-Gordan coefficients and the $3n j$ symbols. While we adopt a more traditional special function approach, 
we believe that it would be interesting to find a Lie-theoretic interpretation of the new results concerning the 
multivariable Racah polynomials in the present paper. Note that for the large family of multivariable Krawtchouk polynomials 
introduced in \cite{G71}, such Lie interpretation together with their spectral properties were obtained in \cite{I}. For a 
recent introduction to these polynomials and numerous probabilistic applications, see \cite{DG} and the references 
therein.

The paper is organized as follows. In the next section we recall the Jacobi polynomials on the simplex, define the 
connection coefficients for these polynomials and their permutations, and study their general properties. Since the 
explicit formulas for these coefficients are given in terms of the Racah polynomials of one or several variables, we collect 
the results for the Racah polynomials in Section \ref{se3}, including several new results clarifying the dual Racah 
orthogonality. Explicit formulas of the connection coefficients in dimension 2 are given in Section \ref{se4}, which serves 
as a basis for further study, and Section \ref{se5} contains a detailed study of all connection coefficients in dimension 3. 
In Section \ref{se6}, we present the general results in arbitrary dimension and, in particular, we give the explicit 
formula for the cyclic permutation in terms of the Racah polynomials of several variables. The connection coefficients 
for the Hahn and Krawtchouk polynomials are discussed in Sections \ref{se7} and \ref{se8}, respectively. Finally, the 
connection coefficients for orthogonal polynomials on the unit ball and the unit sphere are studied in Section \ref{se9}.

\section{Connection coefficients for Jacobi polynomials on the simplex}\label{se2}
\setcounter{equation}{0}

The Jacobi weight function on the simplex $T^d: = \{x \in \RR^d: x_i \ge 0, |x|\le 1\}$, where $|x|:=x_1+\ldots + x_d$,
is defined by 
\begin{equation}\label{eq:weightW}
  W_\kappa (x):=  x_1^{\kappa_1} \cdots x_d^{\kappa_d} (1-|x|)^{\kappa_{d+1}},  \quad x \in T^d,
\end{equation}
where $\k \in \RR^{d+1}$ with $\k_1 > -1,\ldots, \k_{d+1} > -1$. The Jacobi polynomials of $d$ variables 
are orthogonal polynomials with respect to the inner product
\begin{equation}\label{eq:innerW}
   \la f, g \ra_{W_\k} : = \frac{\Gamma(|\kappa| + d+1)}
       {\prod_{i=1}^{d+1}\Gamma(\kappa_i +1)} \int_{T^d} f(x) g(x) W_\k(x) dx. 
\end{equation}
The normalizing factor in front of the integral is chosen so that $\la 1, 1 \ra_{W_\k} =1$.
Let $\CV_n^d(W_\k)$ denote the space of orthogonal polynomials of degree $n$ with respect to $W_\k$. 
A family of orthogonal polynomials with respect to $W_\k$ can be given explicitly in terms of the Jacobi
polynomials. For $\a, \b > -1$, the Jacobi polynomial $P_n^{(\a,\b)}$ satisfies
\begin{align}
P_n^{(\a,\b)}(x) = & \frac{(\a+1)_n}{n!} {}_2F_1 \left (\begin{matrix} -n, n+\a+\b+1 \\ \a+1 \end{matrix}; \frac{1-x}2 \right)
      \label{Jacobi1}\\
  = &  \frac{(\a+1)_n}{n!} \left(\frac{1+x}2 \right)^n{}_2F_1 \left (\begin{matrix} -n, -n-\b \\ \a+1 \end{matrix}; \frac{x-1}{x+1} \right).
      \label{Jacobi2}
\end{align}

To describe our orthogonal polynomials on the simplex, we will need the following notations that will be used 
throughout this paper: 
For $y=(y_1,\ldots, y_{d}) \in \RR^{d}$ and $1 \le j \le d$, we define 
\begin{equation}\label{xsupj}
    \yb_j := (y_1, \ldots, y_j) \quad \hbox{and}\quad \yb^j := (y_j, \ldots, y_d), 
\end{equation}
and also define $\yb_0 := 0$ and $\yb^{d+1} :=  0$. It follows that $\yb_d = \yb^1 = y$, 
and 
$$
   |\yb_j| = y_1 + \cdots + y_j,   \quad |\yb^j| = y_j + \cdots + y_d, \quad\hbox{and}\quad
   |\yb_0| = |\yb^{d+1}| = 0.
$$
For $\k = (\k_1,\ldots, \k_{d+1})$, we have $\bka^j := (\k_j, \ldots, \k_{d+1})$ for $1 \le j \le d+1$ and adopt
this notation also for other Greek letters. For $\nu \in \NN_0^d$ and $\k \in \RR^{d+1}$, we define
\begin{equation}\label{eq:aj}
   a_j:=a_j(\kappa,\nu):=|\bka^{j+1}| + 2 |\bnu^{j+1}| + d-j, \qquad 1 \le j \le d.
\end{equation} 
Notice that $a_{d} = \k_{d+1}$ since $|\bnu^{d+1}| =0$ by definition. The standard basis on the simplex
is given in the following proposition \cite[p. 150]{DX}.

\begin{prop}
For $\k \in \RR^{d+1}$ with $\k_i > -1$, $\nu \in \NN_0^d$ and $x \in \RR^d$, define
\begin{equation}\label{eq:Pnu}
  P_\nu^\kappa (x) := \prod_{j=1}^d \left(1-|\xb_{j-1}| \right)^{\nu_j} 
               P_{\nu_j}^{(a_j,\kappa_j)}\left (\frac{2x_j}{1-|\xb_{j-1}|} -1\right), 
\end{equation}
where $a_j = a_j(\k, \nu)$ is defined in \eqref{eq:aj}. The polynomials in $\{P_\nu^\k :|\nu|=n\}$ form an 
orthogonal basis of $\CV_n^d(W_\k)$ with 
$A_\nu(\k)  = \langle P_\nu^\k, P_\nu^\k \rangle_{W_\kappa}$ given by  
\begin{align} \label{eq:norm1}
  A_\nu(\k) := 
   \frac{1}{(|\kappa|+d+1)_{2|\nu|}}
     \prod_{j=1}^d \frac{(\k_j+a_j+1)_{2 \nu_j} (\k_j +1)_{\nu_j} (a_j+1)_{\nu_j}} {(\kappa_j+a_j+1)_{\nu_j}\nu_j!}.
\end{align}
\end{prop}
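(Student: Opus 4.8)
The plan is to reduce the $d$-dimensional integral to a product of one-variable Jacobi orthogonality integrals by unfolding the simplex onto the cube $[0,1]^d$. Concretely, I would introduce the coordinates
\begin{equation*}
  u_j := \frac{x_j}{1 - |\xb_{j-1}|}, \qquad 1 \le j \le d,
\end{equation*}
so that the argument of the $j$-th Jacobi factor in \eqref{eq:Pnu} is exactly $2u_j - 1$. The elementary identities $x_j = u_j(1-|\xb_{j-1}|)$ and $1 - |\xb_j| = (1-|\xb_{j-1}|)(1-u_j)$ give, by induction, $1 - |\xb_{j-1}| = \prod_{i<j}(1-u_i)$, and show that $x \mapsto u$ maps $T^d$ bijectively onto $[0,1]^d$. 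The Jacobian matrix $(\partial x_j/\partial u_k)$ is lower triangular, so its determinant is $\prod_{j=1}^d (1-|\xb_{j-1}|) = \prod_{i=1}^d (1-u_i)^{d-i}$. Substituting into \eqref{eq:weightW} and collecting, for each $i$, the powers of $(1-u_i)$ arising from the factors $x_j^{\k_j}$ with $j>i$, from $(1-|x|)^{\k_{d+1}}$, and from the Jacobian, I expect the weight to factor completely:
\begin{equation*}
  W_\k(x)\,dx = \prod_{i=1}^d u_i^{\k_i}(1-u_i)^{|\bka^{i+1}| + d - i}\,du_i.
\end{equation*}

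The subtlety is that $P_\nu^\k$ does \emph{not} factor as a product of univariate polynomials in the $u_j$: the prefactor $(1-|\xb_{j-1}|)^{\nu_j} = \prod_{i<j}(1-u_i)^{\nu_j}$ mixes the coordinates. The key observation, and the engine of the whole argument, is that these mixing factors are precisely accounted for by the definition \eqref{eq:aj} of $a_j$. I would integrate the variables one at a time in the order $u_d, u_{d-1}, \ldots, u_1$. In $P_\nu^\k P_\mu^\k\, W_\k\, |\mathrm{Jac}|$ the total power of $(1-u_i)$ equals $|\bka^{i+1}| + d - i + |\bnu^{i+1}| + |\bmu^{i+1}|$. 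At the innermost step $i = d$ this power is $\k_{d+1} = a_d$ independently of $\nu,\mu$, and since both Jacobi factors then carry the parameters $(a_d,\k_d)$, the $u_d$-integral is a one-variable Jacobi orthogonality that vanishes unless $\nu_d = \mu_d$. Proceeding by downward induction, once $\nu_j = \mu_j$ for all $j>i$ one has $|\bnu^{i+1}| = |\bmu^{i+1}|$, so the power of $(1-u_i)$ collapses to $|\bka^{i+1}| + 2|\bnu^{i+1}| + d - i = a_i$ with $a_i(\k,\nu)=a_i(\k,\mu)$, and the $u_i$-integral becomes
\begin{equation*}
  \int_0^1 u_i^{\k_i}(1-u_i)^{a_i}\, P_{\nu_i}^{(a_i,\k_i)}(2u_i-1)\,P_{\mu_i}^{(a_i,\k_i)}(2u_i-1)\,du_i,
\end{equation*}
again a Jacobi orthogonality forcing $\nu_i = \mu_i$. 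This yields $\la P_\nu^\k, P_\mu^\k\ra_{W_\k} = 0$ for all $\nu \ne \mu$, regardless of degree; since $\{P_\mu^\k : |\mu|\le n\}$ is triangular with respect to the monomial basis and hence spans the polynomials of degree $\le n$, each $P_\nu^\k$ with $|\nu|=n$ is orthogonal to every polynomial of lower degree, i.e. $P_\nu^\k\in\CV_n^d(W_\k)$. Having $\binom{n+d-1}{n}=\dim\CV_n^d(W_\k)$ mutually orthogonal elements then gives a mutually orthogonal basis.

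On the diagonal $\nu = \mu$ the same computation makes everything factor, so that
\begin{equation*}
  A_\nu(\k) = \frac{\Gamma(|\k|+d+1)}{\prod_{i=1}^{d+1}\Gamma(\k_i+1)} \prod_{j=1}^d \int_0^1 u_j^{\k_j}(1-u_j)^{a_j}\,\big[P_{\nu_j}^{(a_j,\k_j)}(2u_j-1)\big]^2\,du_j.
\end{equation*}
Each factor is a standard Jacobi norm, evaluable in closed form from \eqref{Jacobi1}--\eqref{Jacobi2}, and it then remains to simplify the resulting product of Gamma factors into \eqref{eq:norm1}. The mechanism is the recursion $a_{j-1} = a_j + \k_j + 2\nu_j + 1$, immediate from \eqref{eq:aj}, which makes consecutive Gamma factors telescope; in particular the global constant $\Gamma(|\k|+d+1)/\prod_i\Gamma(\k_i+1)$ collapses against the product of beta-function normalizations to leave the single Pochhammer $1/(|\k|+d+1)_{2|\nu|}$ in front.

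The main obstacle is conceptual rather than computational: because $P_\nu^\k$ fails to factor in the $u$-coordinates, one must integrate in the correct inward order and verify that the accumulated powers of $(1-u_i)$ produced by the non-factoring prefactors reconstitute \emph{exactly} the Jacobi parameter $a_i$ needed for orthogonality at the next level. Checking this parameter-matching identity $|\bka^{i+1}| + 2|\bnu^{i+1}| + d - i = a_i$, together with the compatibility $a_i(\k,\nu)=a_i(\k,\mu)$ once the tail indices agree, is the crux of the argument; the remaining Gamma-function bookkeeping leading to \eqref{eq:norm1} is routine but must be carried out with care.
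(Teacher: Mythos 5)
Your proof is correct, but there is nothing in the paper to compare it against: the paper does not prove this proposition, it simply quotes it as a known result with the citation \cite[p.~143]{DX}. Your argument --- the substitution $u_j = x_j/(1-|\xb_{j-1}|)$ factoring $W_\k(x)\,dx$ over the cube, the observation that the exponent $|\bka^{i+1}|+d-i+|\bnu^{i+1}|+|\bmu^{i+1}|$ of $(1-u_i)$ collapses to $a_i$ precisely when the tail indices agree (so each coordinate integral becomes a univariate Jacobi orthogonality), and the telescoping of the Gamma factors via $a_{j-1}=a_j+\k_j+2\nu_j+1$, $a_0=|\k|+2|\nu|+d$ --- is essentially the standard proof given in that reference, and all of its steps check out (in particular $2\nu_j+a_j+\k_j+1=a_{j-1}$ makes the product of univariate norms reduce exactly to \eqref{eq:norm1}).
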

We shall call the polynomials $P_\nu^\k$ the Jacobi polynomials on the simplex. 

Let $S_{d+1}$ be the symmetric group consisting of all permutations of $d+1$ symbols. 
For $\tau\in S_{d+1}$ and $y\in\RR^{d+1}$ we define
$$\tau y:=(y_{\tau(1)},\dots,y_{\tau(d+1)}).$$
To capture the symmetry of the simplex, we make the following definition. 
\begin{defn}\label{ppp}
For $\tau\in S_{d+1}$ and $x\in T^d$, we define $\tau x$ as the first $d$ components of $\tau (x_1,\ldots, x_d, x_{d+1})$, where $x_{d+1}: =1-|x|$; i.e. $\tau x:=(x_{\tau(1)},\dots,x_{\tau(d)})\in T^{d}$.
\end{defn} 

Evidently the weight function $W_\k$ satisfies $W_{\tau\k}(\tau x) = W_\k(x)$. Furthermore, it is easy to see that the
set of polynomials $\{P_\nu^{\tau\k}(\tau x): |\nu| =n\}$ is also an orthogonal basis of $\CV_n^d(W_\k)$. 
Moreover, the square of the norm of $P_\nu^{\tau\k}(\tau\{\cdot\})$ is $A_\nu(\tau\k )$. Indeed, using the fact 
that $W_\k(\tau^{-1} y) = W_{\tau\k}(y)$, 
\begin{align*}
  \int_{T^d} \left[ P_\nu^{\tau\k}(\tau x)\right]^2 W_k(x)dx = \int_{T^d} \left[ P_\nu^{\tau\k}(y)\right]^2 W_{\tau\k}(y)dy
     = A_\nu(\tau\k). 
\end{align*}
Since $\{P_\nu^{\k}: |\nu| =n\}$ and $\{P_\nu^{\tau\k}(\tau\{\cdot\}): |\nu| =n\}$ are both bases of $\CV_n^d(W_\k)$, we 
can write one basis in terms of the other. 

\begin{defn}\label{permact}
Let $\tau \in S_{d+1}$ and $\nu, \mu \in \NN_0^d$ with $|\nu| = |\mu| =n$. The connection coefficients $ c_{\nu,\mu}^\tau(\k)$
are defined by 
\begin{equation} \label{defn:cc}
  P_\nu^{\tau\k}(\tau x) = \sum_{|\mu|= n} c_{\nu,\mu}^\tau(\k) P_\mu^\k (x). 
\end{equation}
\end{defn}

Let $\wh P_\nu^\k (x) =  A_\nu(\k)^{-1/2} P_\nu^\k (x)$. Then $\{ \wh P_\nu^\k: |\nu| =n\}$ is an orthonormal basis of 
$\CV_n^d$. We can write \eqref{defn:cc} as 
\begin{equation}\label{eq:conn-c-coeff}
    \wh P_\nu^{\tau\k}(\tau x) = \sum_{|\mu|= n} \wh c_{\nu,\mu}^{\, \tau}(\k) \wh P_\mu^\k (x), \quad\hbox{with}\quad
        \wh c_{\nu,\mu}^{\, \tau}(\k) := \sqrt{\frac{A_\mu(\k)}{A_\nu(\tau\k)} } c_{\nu,\mu}^\tau(\k),  
\end{equation}
where $\wh c_{\nu,\mu}^{\, \tau}(\k)$ are the normalized connection coefficients. 

The connection coefficients satisfy the following useful relation.
\begin{prop} \label{prop:convolu}
Let $\tau_1$ and $\tau_2$ be two elements of $S_{d+1}$. Then, for $\mu,\nu \in \NN_0^{d}$ and $|\nu| = |\mu| =n$, 
\begin{align} \label{eq:c-convolution}
   c_{\nu,\mu}^{\tau_1\tau_2}(\k) = \sum_{|\omega|=n} c_{\nu,\omega}^{\tau_2} (\tau_1\k) c_{\omega,\mu}^{\tau_1}(\k).
\end{align}
Moreover, formula \eqref{eq:c-convolution} holds also for the normalized connection coefficients.
\end{prop}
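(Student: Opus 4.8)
The plan is to prove the identity by iterating the defining relation \eqref{defn:cc} and then matching coefficients against the basis $\{P_\mu^\k : |\mu| = n\}$. The one structural fact I would isolate first is that the assignments $x \mapsto \tau x$ and $\k \mapsto \tau\k$ are compatible with the group law: since every $\tau \in S_{d+1}$ permutes the coordinates of $X = (x_1,\dots,x_d,1-|x|)$ and preserves the sum $\sum_i X_i = 1$, one checks that $X(\tau x) = \tau X(x)$, and hence that applying $\tau_1\tau_2$ factors as two successive substitutions. Concretely, with the conventions of Section \ref{se2} this reads
\begin{equation*}
  (\tau_1\tau_2)x = \tau_2(\tau_1 x), \qquad (\tau_1\tau_2)\k = \tau_2(\tau_1\k),
\end{equation*}
and getting this bookkeeping right --- in particular the order in which the two permutations are peeled off --- is the step I expect to be the main (and essentially the only) obstacle, since an error here merely transposes $\tau_1$ and $\tau_2$ in the final formula.

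Granting this, I would compute $P_\nu^{(\tau_1\tau_2)\k}\big((\tau_1\tau_2)x\big)$ in two ways. On one hand, the definition \eqref{defn:cc} applied to the permutation $\tau_1\tau_2$ gives directly
\begin{equation*}
  P_\nu^{(\tau_1\tau_2)\k}\big((\tau_1\tau_2)x\big) = \sum_{|\mu|=n} c_{\nu,\mu}^{\tau_1\tau_2}(\k)\, P_\mu^\k(x).
\end{equation*}
On the other hand, rewriting the left-hand side as $P_\nu^{\tau_2(\tau_1\k)}\big(\tau_2(\tau_1 x)\big)$ and applying \eqref{defn:cc} first with the permutation $\tau_2$ and parameter $\tau_1\k$ (evaluated at the point $\tau_1 x$), and then with $\tau_1$ and parameter $\k$, yields
\begin{equation*}
  P_\nu^{(\tau_1\tau_2)\k}\big((\tau_1\tau_2)x\big) = \sum_{|\omega|=n} c_{\nu,\omega}^{\tau_2}(\tau_1\k)\, P_\omega^{\tau_1\k}(\tau_1 x) = \sum_{|\mu|=n}\Big(\sum_{|\omega|=n} c_{\nu,\omega}^{\tau_2}(\tau_1\k)\, c_{\omega,\mu}^{\tau_1}(\k)\Big) P_\mu^\k(x).
\end{equation*}
Comparing the two expansions and invoking the linear independence of $\{P_\mu^\k : |\mu| = n\}$ (they form a basis of $\CV_n^d(W_\k)$ by the Proposition of Section \ref{se2}) gives \eqref{eq:c-convolution} at once.

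For the normalized statement I would simply carry the normalization factors through. Substituting $\wh c_{\nu,\mu}^{\,\tau}(\k) = \sqrt{A_\mu(\k)/A_\nu(\tau\k)}\; c_{\nu,\mu}^\tau(\k)$ into the right-hand side, the factor $A_\omega(\tau_1\k)$ coming from $\wh c_{\nu,\omega}^{\,\tau_2}(\tau_1\k)$ cancels against the matching factor in $\wh c_{\omega,\mu}^{\,\tau_1}(\k)$, leaving the common prefactor $\sqrt{A_\mu(\k)/A_\nu(\tau_2(\tau_1\k))}$; since $\tau_2(\tau_1\k) = (\tau_1\tau_2)\k$, this is exactly the normalization attached to $\wh c_{\nu,\mu}^{\,\tau_1\tau_2}(\k)$, so the unnormalized identity upgrades to the normalized one. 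Alternatively, one can avoid the telescoping altogether by using the integral representation $\wh c_{\nu,\mu}^{\,\tau}(\k) = \langle \wh P_\nu^{\tau\k}(\tau\cdot), \wh P_\mu^\k\rangle_{W_\k}$ (as in the proof of Proposition \ref{prop:inverse}): expanding $\wh P_\nu^{(\tau_1\tau_2)\k}((\tau_1\tau_2)\cdot)$ in the orthonormal basis $\{\wh P_\omega^{\tau_1\k}(\tau_1\cdot)\}$ of $\CV_n^d(W_\k)$ and changing variables $y = \tau_1 x$ produces the normalized convolution directly, with $\delta$-orthonormality playing the role of the linear-independence step.
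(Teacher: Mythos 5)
Your proposal is correct and follows essentially the same route as the paper: iterate the defining relation \eqref{defn:cc} (the paper phrases this as ``applying $\tau_1$ on both sides'' of the $\tau_2$-expansion), compare coefficients against the basis $\{P_\mu^\k : |\mu|=n\}$, and handle the normalized case by carrying the factors $\sqrt{A_\mu(\k)/A_\nu(\tau\k)}$ through via \eqref{eq:conn-c-coeff}. Your explicit verification of the right-action bookkeeping $(\tau_1\tau_2)x=\tau_2(\tau_1 x)$ and $(\tau_1\tau_2)\k=\tau_2(\tau_1\k)$ is precisely the convention the paper uses implicitly (consistent, e.g., with its identities $P_{n-j,j}^{(123)\k}=(-1)^jP_{n-j,j}^{(12)\k}$ and $(123)=(13)(12)$), so your formula indeed lands with $\tau_1$ and $\tau_2$ in the correct positions.
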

 
\begin{proof}
By \eqref{defn:cc}, we have  $P_\nu^{\tau_2\k}(\tau_2x)  = \sum_{|\omega| = n} c_{\nu, \omega}^{\tau_2} (\k) P_\omega^{\k}(x).$ Applying $\tau_1$ on both sides we get
\begin{align*}
  P_\nu^{\tau_1\tau_2\k}(\tau_1\tau_2x) & = \sum_{|\omega| = n} c_{\nu, \omega}^{\tau_2} (\tau_1\k) P_\omega^{\tau_1\k}(\tau_1x) \\
       & = \sum_{|\omega| = n} c_{\nu, \omega}^{\tau_2} (\tau_1\k)  \sum_{|\mu| = n} c_{\omega,\mu }^{\tau_1}(\k)
             P_\mu^{\k}(x) \\
       & = \sum_{|\mu| = n} \sum_{|\omega| = n} c_{\nu, \omega}^{\tau_2} (\tau_1\k) c_{\omega,\mu }^{\tau_1}(\k) 
             P_\mu^{\k}(x), 
\end{align*}
from which the identity \eqref{eq:c-convolution} follows from the definition of $c_{\nu,\mu}^{\tau_1\tau_2}(\k)$. The proof of \eqref{eq:c-convolution} for the normalized coefficients follows along the same lines by working with formula \eqref{eq:conn-c-coeff}.
\end{proof}

As an immediate corollary we obtain the following useful relations for the normalized connection coefficients.

\begin{prop}
For $\nu, \mu \in \NN_0^{d}$ with $|\nu| = |\mu| =n$ we have
\begin{align}\label{eq:discreteOP}
     \sum_{|\omega| =n}  \wh c_{\nu,\omega}^{\, \tau}(\k)  \wh c_{\mu,\omega}^{\, \tau}(\k) = \delta_{\nu,\mu},
        \quad\hbox{}\quad
     \sum_{|\omega| =n}  \wh c_{\omega,\nu}^{\, \tau}(\k)  \wh c_{\omega,\mu}^{\, \tau}(\k) = \delta_{\nu,\mu},
\end{align}
and
\begin{equation} \label{eq:inverse}
\wh c_{\nu,\mu}^{\, \tau^{-1}}(\k) = \wh c_{\mu,\nu}^{\, \tau}(\tau^{-1}\k).
\end{equation}
\end{prop}

Directly from the orthogonality, it follows that 
$$
   c_{\nu,\mu}^\tau(\k) = \frac{1}{A_{\nu}(\k)} \la  P_\mu^{\tau\k}(\tau\{\cdot\}),P_\nu^\k \ra_{W_\k}.
$$
We are interested in finding an explicit expression for $c_{\nu,\mu}^\tau(\k)$. It turns out that these coefficients can be regarded
as polynomials, which satisfy, by \eqref{eq:discreteOP}, a discrete orthogonality relation. In fact, these discrete orthogonal 
polynomials can be expressed by the Racah polynomials. 

\section{Racah polynomials}\label{se3}
\setcounter{equation}{0}

Since the explicit formula of the connection coefficient $c_{\nu,\mu}^\tau(\k)$ involves the Racah polynomials, 
we collect what we need for these polynomials in this section. 

The classical Racah polynomials in one-variable are defined by 
$$
   R_n(\l(x)):= R_n(\l(x); \a,\b,\g,\delta) 
    =  {}_4F_3 \left (\begin{matrix} -n, n+\a+\b+1,-x, x+\g+\delta +1\\ \a+1,\b+\delta+1,\g+1 \end{matrix}; 1 \right) 
$$ 
for $n=0,1,\ldots, N$ and $\l(x) = x(x+\g+\d+1)$ and one of the bottom parameters is $-N$, see \cite[p.~190]{KLS}. They satisfy the orthogonality relation
$$
    \sum_{x=0}^N w_R(x) R_n(\l(x)) R_m(\l(x)) = r_n \delta_{m,n},
$$ 
where $r_n$ is the square of the norm of $R_n$ and $w_R$ is the weight function given by
\begin{align} \label{eq:Racah-weight}
   w_R(x): = & \,w(x;\a,\b,\g,\delta) \\
            =  & \,   \frac{(\g+\delta+1)_x ((\g+\delta+3)/2)_x (\a+1)_x(\b+\delta+1)_x (\g +1)_x}
       {x!((\g+\delta+1)/2)_x (\g+\delta -\a+1)_x(\g -\b+1)_x (\delta +1)_x}. \notag
\end{align}

For $d \ge 2$, the Racah polynomials of $d$ variables are defined on the simplex 
$V_N=\{x \in \NN_0^d: 0 \le x_1 \le x_2 \le \cdots \le x_d \le  N\}$ and they are orthogonal with respect to the 
weight function
\begin{align}\label{eq:d-Racah-weight}
     w_\sR(x;\beta,N):= \prod_{j=0}^d \frac{(\b_{j+1}-\b_j)_{x_{j+1}-x_j}(\b_{j+1})_{x_{j+1}+x_j}}
       {(x_{j+1}-x_j)! (\b_{j}+1)_{x_{j+1}+x_j}} \prod_{j=1}^d \frac{((\b_j+2)/2)_{x_j}}{(\b_j/2)_{x_j}},
\end{align}
where $x_0=0$ and $x_{d+1}=N$.
A basis for $\CV_n(w_\sR)$ is given explicitly by $\{\sR_\nu: \nu \in \NN_0^d,\;|\nu| = n\}$, where 
\begin{align*}
  & \sR_\nu(x;\beta, N):=   \prod_{j=1}^d (2 |\bnu_{j-1}| +\b_j-\b_0)_{\nu_j} 
   ( |\bnu_{j-1}|+\b_{j+1}+x_{j+1})_{\nu_j}( |\bnu_{j-1}|-x_{j+1})_{\nu_j} \\
      & \qquad \quad \times  
       {}_4F_3 \left (\begin{matrix} -\nu_j, \nu_j+  2 |\bnu_{j-1}| +\b_{j+1}-\b_0-1,   |\bnu_{j-1}|-x_j,   |\bnu_{j-1}| +\b_j+x_j\\
      2 |\bnu_{j-1}| +\b_j-\b_0, |\bnu_{j-1}|+\b_{j+1}+x_{j+1},  |\bnu_{j-1}|-x_{j+1} \end{matrix}; 1 \right).
\end{align*}
These polynomials are mutually orthogonal with respect to $w_\sR$, that is, 
$$
  \sum_{x\in V_N} \sR_\nu(x;\beta,N) \sR_\mu(x;\beta,N) w_\sR(x;\beta,N) = \delta_{\nu,\mu}  \left[ r_\nu(\beta,N) \right]^2,
$$
where $r_\nu(\beta,N)$ is the norm of $\sR_\nu(\cdot;\beta,N)$. Using formula (2.4) in \cite{Tr2}, one can show that
\begin{align}\label{eq:norm}
   &  [r_\nu(\beta,N)]^2=\frac{(\b_{d+1})_{N+|\nu|} (-N)_{|\nu|}(-N-\b_0)_{|\nu|} (2|\nu|+\b_{d+1}-\b_{0})_{N-|\nu|}}{N!\,(\b_0+1)_N}\\ 
      \times 
     \prod_{k=1}^{d}& \nu_k!(\b_{k+1}-\b_{k})_{\nu_k}(2|\bnu_{k-1}|+\b_{k}-\b_{0})_{\nu_k}(|\bnu_{k}|+|\bnu_{k-1}|+\b_{k+1}-\b_{0}-1)_{\nu_k}
     \notag.
\end{align}

\begin{rem} 
When $d=1$, the formulas above define the same polynomials, up to some unessential factors. More precisely, if we set 
\begin{equation}\label{eq:connection}
\alpha=-N-1,\quad \beta=\beta_2-\beta_0-1+N,\quad \gamma=\beta_1-\beta_0-1,\quad \delta=\beta_0 ,
\end{equation}
it is not hard to see that weights in \eqref{eq:Racah-weight} and \eqref{eq:d-Racah-weight} differ by a factor independent of $x$ and the  ${}_4F_3$ functions defining the polynomials coincide. In particular, if we consider the orthonormal polynomials we have  $\hat{\sR}_n(x)=(-1)^n\hat{R}_n(\lambda(x))$.
\end{rem}

Next we formulate an important duality relation which lies at the heart of the bispectral property of the Racah polynomials studied in \cite{GI}. Let us define dual indices $\nut$, variables $\xt$, and parameters $\bet$ by
\begin{equation}
\begin{alignedat}{2}
  \xt_j&=N-|\bnu_{d+1-j}| && \text{   for } 
                                 j=1,\dots, d,\label{eq:dualvariables}\\
\nut_j&=x_{d+2-j}-x_{d+1-j} &&\text{   for } j=1,\dots, d, \\
\bet_0&=\beta_0, \\
\bet_j&=\beta_0-\beta_{d+2-j}-2N+1 &&\text{   for }j=1,\dots, d+1. 
\end{alignedat}
\end{equation}

\begin{prop}\label{prop:duality}
The map $(x,\b,\nu,N)\to(\xt,\bet,\nut,N)$ is an involution. Moreover, the Racah polynomials satisfy the following duality relation
\begin{align}\label{eq:duality}
 & \frac{\sR_{\nu}(x;\beta,N)}{(-N)_{|\nu|}(-N-\beta_0)_{|\nu|}\prod_{j=1}^{d}(\beta_{j+1}-\beta_{j})_{\nu_j}} \\
  & \qquad\qquad\qquad =\frac{\sR_{\nut}(\xt;\bet,N)}{(-N)_{|\nut|}(-N-\bet_0)_{|\nut|}\prod_{j=1}^{d}(\bet_{j+1}-\bet_{j})_{\nut_j}}. \notag
\end{align}
\end{prop}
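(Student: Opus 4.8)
The statement splits into two independent claims, and I would treat them separately: the involution is a direct finite computation, while the duality \eqref{eq:duality} is a hypergeometric identity that I would reduce to a one–variable statement. For the involution I would apply the map twice and check that each datum returns. The rule $\bet_0=\beta_0$, $\bet_j=\beta_0-\beta_{d+2-j}-2N+1$ is an affine reflection in $j\mapsto d+2-j$, and substituting it into itself gives $\beta_0-\bet_{d+2-j}-2N+1=\beta_j$, so $\bet$ is self-inverse. For the discrete data, note that $\nut_j=x_{d+2-j}-x_{d+1-j}$ records the consecutive gaps of the chain $0=x_0\le\cdots\le x_{d+1}=N$ in reverse, whereas $\xt_j=N-|\bnu_{d+1-j}|$ records $N$ minus the partial sums of $\nu$ in reverse. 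The only identities needed are the telescopings $\sum_{i=1}^{k}\nut_i=N-x_{d+1-k}$ and $\xt_{d+2-j}-\xt_{d+1-j}=\nu_j$; fed back through the definitions these return $\nu$ and $x$ after two passes. Monotonicity of the partial sums shows the dual chain again lies in the simplex, and $|\nut|=N-x_1$.

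For the duality I would exploit the product form of $\sR_\nu(x;\beta,N)$. Reading off parameters, the $j$-th hypergeometric factor is a one–variable Racah polynomial $R_{\nu_j}$ in the staircase variable $x_j-|\bnu_{j-1}|$, whose four parameters are built from $\beta_0,\beta_j,\beta_{j+1}$ with the neighbour $x_{j+1}$ entering as a parameter. My plan is to try to establish the normalized identity factor by factor, pairing the $j$-th factor on the left with the $(d+1-j)$-th on the right. The substitution \eqref{eq:dualvariables} supports this pairing: with $k=d+1-j$ one finds the dual degree is the gap $\nut_{d+1-j}=x_{j+1}-x_j$, the dual partial sum is $\nut_1+\cdots+\nut_{d-j}=N-x_{j+1}$, and $\xt_{d+1-j}=N-|\bnu_j|$, $\xt_{d+2-j}=N-|\bnu_{j-1}|$, while $\bet$ delivers the reflected parameters. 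Thus the \emph{degree} slot $\nu_j$ and a \emph{variable} slot are exchanged, which is the essence of the duality.

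The engine of the factor-wise step is \emph{not} the trivial symmetry of the ${}_4F_3$ (interchanging its two numerator pairs merely relabels parameters and changes nothing); it is a genuine transformation of a terminating, balanced (Saalschützian) ${}_4F_3$ of Sears type, which trades the terminating parameter for a variable-type parameter and emits Pochhammer ratios. These ratios must then be reconciled with the explicit prefactors in the definition of $\sR_\nu$ and with the normalizing denominator $(-N)_{|\nu|}(-N-\beta_0)_{|\nu|}\prod_{j}(\beta_{j+1}-\beta_j)_{\nu_j}$, which is tailored precisely so that the products telescope into their dual counterparts. The step I expect to be the main obstacle is the coupling between adjacent factors: $x_{j+1}$ is at once the argument of factor $j+1$ and a parameter of factor $j$, so the blocks cannot be dualized independently and the reversal $j\mapsto d+1-j$ genuinely mixes neighbours. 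I would therefore fall back on induction on $d$, peeling off the outermost factor—whose $|\bnu_0|=0$ makes it a clean one–variable Racah—and carrying the shared variable through the inductive hypothesis, with base case $d=1$ the trivial symmetry that the authors single out. Checking that all prefactors combine correctly, with particular care for the $\beta_0$-shift and the $\pm2N$ terms in $\bet$, is the delicate remaining work.
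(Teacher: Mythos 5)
Your involution check is fine, and the factor-wise plan you describe first---transform the $j$-th terminating balanced ${}_4F_3$ by a Sears/Whipple-type transformation and match it with factor $d+1-j$ of the dual polynomial---is precisely the paper's proof: the paper applies the Whipple identity \eqref{Whipple} to the $j$-th factor (with $m=\nu_j$ and the other six slots built from $|\bnu_{j-1}|$, $x_j$, $x_{j+1}$, $\beta_0$, $\beta_j$, $\beta_{j+1}$), observes that the transformed ${}_4F_3$, rewritten in the dual data \eqref{eq:dualvariables}, is exactly the ${}_4F_3$ of the $(d+1-j)$-th factor of $\sR_{\nut}(\xt;\bet,N)$, so all ${}_4F_3$'s cancel in the ratio $\sR_{\nu}(x;\beta,N)/\sR_{\nut}(\xt;\bet,N)$ and only Pochhammer bookkeeping against the normalizing denominators remains. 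The ``coupling'' that makes you abandon this plan is not an obstacle: the transformation is applied to each factor separately, and the whole point is that the duality map carries the \emph{complete} parameter list of factor $j$ (which includes the neighbour $x_{j+1}$ and the partial sums $|\bnu_{j-1}|$, $|\bnu_j|$) onto the complete parameter list of factor $d+1-j$ of the dual. No induction is needed, and your retreat from the correct strategy is the first problem.

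The second, genuine, flaw is that the fallback induction you commit to rests on a false base case. For $d=1$, identity \eqref{eq:duality} is \emph{not} the trivial numerator-parameter symmetry of the ${}_4F_3$: that symmetry exchanges $\nu_1$ and $x_1$, whereas here $\nut_1=N-x_1$, $\xt_1=N-\nu_1$, and the parameters are reflected, $\bet_1=\beta_0-\beta_2-2N+1$, $\bet_2=\beta_0-\beta_1-2N+1$. This complementary duality cannot be read off the hypergeometric representation; it already requires the very Whipple/Sears transformation you were hoping to confine to the ``delicate remaining work.'' (What the authors call trivial in the introduction is the duality in the classical parametrization $R_n(\l(x);\a,\b,\g,\d)$, where no complements $N-{}\cdot{}$ appear.) So as written your induction stalls at $d=1$; and once you prove that base case by a Whipple transformation, the same transformation, applied factor by factor as above, proves the general case outright, making the induction superfluous. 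There is also an unverified step hidden in your inductive scheme: peeling off factor $j=1$ on the left corresponds to peeling off factor $j=d$ on the right, and the remaining product is a $(d-1)$-variable Racah polynomial only after shifting all variables and parameters by $\nu_1$; checking that this shift intertwines with the duality map is additional work your outline does not address.
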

\begin{proof}
The duality can be obtained from the one established in \cite{GI}, by applying the involutions $x_j\to-\b_j-x_j$ to the duality relations (4.1) and Theorem 4.4 there. Alternatively, we give a direct proof as follows. Applying the Whipple identity
\begin{align}
&(U)_m(V)_m(W)_m\;\fFt{-m}{X}{Y}{Z}{U}{V}{W}   =(1-V+Z-m)_m \label{Whipple}\\
& \quad \times (1-W+Z-m)_m(U)_m\;
\fFt{-m}{U-X}{U-Y}{Z}{1-V+Z-m}{1-W+Z-m}{U}\notag
\end{align}
with $m=\nu_j$, $X=\nu_j+2|\bnu_{j-1}|+\b_{j+1}-\b_0-1$, $Y=|\bnu_{j-1}|-x_j$, $Z=|\bnu_{j-1}|+\b_j+x_{j}$, $U=|\bnu_{j-1}|-x_{j+1}$,  
$V=2|\bnu_{j-1}|+\b_{j}-\b_0$,  $W=|\bnu_{j-1}|+\b_{j+1}+x_{j+1}$, we see that the $j$th term in the product defining 
the Racah polynomials can be rewritten as
\begin{align*}
&(|\bnu_{j-1}| -x_{j+1})_{\nu_j} 
   (1- |\bnu_{j}|+\b_0+x_{j})_{\nu_j}(1- \nu_{j}+\b_{j}-\b_{j+1}+x_{j}-x_{j+1})_{\nu_j} \\
  &\quad    \times  
       {}_4F_3 \left (\begin{matrix} -\nu_j, -x_{j+1}+x_{j}, -x_{j+1}-|\bnu_j| -\b_{j+1}+\b_0+1,  |\bnu_{j-1}| +\b_j+x_j\\
       |\bnu_{j-1}| -x_{j+1}, 1- |\bnu_{j}|+\b_0+x_{j}, 1- \nu_{j}+\b_{j}-\b_{j+1}+x_{j}-x_{j+1}\end{matrix}; 1 \right).
\end{align*}
Plugging the dual variables \eqref{eq:dualvariables} in the last formula, one can show that the ${}_4F_3$ term coincides with the ${}_4F_3$ term above with $j$ replaced by $d+1-j$. Thus all ${}_4F_3$ terms in $\sR_{\nu}(x;\beta,N)/\sR_{\nut}(\xt;\bet,N)$ cancel and after we simplify and rearrange the remaining products, we obtain equation \eqref{eq:duality}.
\end{proof}

Using formulas \eqref{eq:d-Racah-weight}, \eqref{eq:norm} and \eqref{eq:dualvariables} one can check that 
\begin{equation}
\frac{r_{\nu}(\b,N)^2\,  w_\sR(\xt;\bet,N)}{\left[(-N)_{|\nu|}(-N-\b_0)_{|\nu|} \prod_{k=1}^{d}(\b_{k+1}-\b_{k})_{\nu_k}\right]^2}
=\frac{(\b_{d+1})_{2N}(\bet_{d+1})_{2N}}{[N!\,(\b_0+1)_N]^2},
\end{equation}
which is a constant independent of $\nu$. 

Combining Proposition~\ref{prop:duality} with the last equation we obtain the following corollary.
\begin{cor}\label{cor:dualorthogonality}
The orthonormal Racah polynomials satisfy the duality relation 
\begin{equation}\label{eq:dualorthogonality}
\sqrt{w_\sR(x; \beta, N)}\, \wh \sR_{\nu}(x; \beta, N)=\sqrt{w_\sR(\xt; \bet, N)}\, \wh \sR_{\nut}(\xt; \bet, N).
\end{equation}
\end{cor}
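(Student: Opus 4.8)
The plan is to prove the duality for the orthonormal polynomials by directly combining the signed duality relation \eqref{eq:duality} of Proposition~\ref{prop:duality} with the constancy identity displayed immediately before the corollary (the one expressing $r_\nu(\beta,N)^2\,w_\sR(\xt;\bet,N)/[\cdots]^2$ as a constant), together with the image of that identity under the involution $(x,\beta,\nu,N)\mapsto(\xt,\bet,\nut,N)$. To streamline the bookkeeping I would abbreviate the denominator common to both ingredients by
\[
  D_\nu(\beta,N):=(-N)_{|\nu|}(-N-\beta_0)_{|\nu|}\prod_{j=1}^{d}(\beta_{j+1}-\beta_{j})_{\nu_j},
\]
so that \eqref{eq:duality} reads $\sR_\nu(x;\beta,N)/D_\nu(\beta,N)=\sR_{\nut}(\xt;\bet,N)/D_{\nut}(\bet,N)$ and the constancy identity reads $r_\nu(\beta,N)^2\,w_\sR(\xt;\bet,N)=C\,D_\nu(\beta,N)^2$, where $C=(\beta_{d+1})_{2N}(\bet_{d+1})_{2N}/[N!\,(\beta_0+1)_N]^2$. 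Recalling the convention $\wh\sR_\nu=\sR_\nu/r_\nu$, I would begin from the left-hand side $\sqrt{w_\sR(x;\beta,N)}\,\sR_\nu(x;\beta,N)/r_\nu(\beta,N)$ and use \eqref{eq:duality} to replace $\sR_\nu(x;\beta,N)$ by $(D_\nu(\beta,N)/D_{\nut}(\bet,N))\,\sR_{\nut}(\xt;\bet,N)$, keeping $\sR_{\nut}(\xt;\bet,N)$ as a common factor so that no division by the polynomial is ever needed.

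Next, since Proposition~\ref{prop:duality} already establishes that the map is an involution, and $C$ is visibly symmetric under $\beta\leftrightarrow\bet$ because $\bet_0=\beta_0$ and the numerator is symmetric, applying the involution to the constancy identity produces its companion $r_{\nut}(\bet,N)^2\,w_\sR(x;\beta,N)=C\,D_{\nut}(\bet,N)^2$ with the \emph{same} constant $C$. These two constancy identities are exactly what is needed to convert the leftover weight and norm factors: the first controls $D_\nu(\beta,N)/r_\nu(\beta,N)$ via $\sqrt{w_\sR(\xt;\bet,N)/C}$, and the second controls $1/D_{\nut}(\bet,N)$ via $\sqrt{C/w_\sR(x;\beta,N)}/r_{\nut}(\bet,N)$. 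Substituting both and cancelling the factors $\sqrt{C}$ and $\sqrt{w_\sR(x;\beta,N)}$ collapses the expression to $\sqrt{w_\sR(\xt;\bet,N)}\,\sR_{\nut}(\xt;\bet,N)/r_{\nut}(\bet,N)=\sqrt{w_\sR(\xt;\bet,N)}\,\wh\sR_{\nut}(\xt;\bet,N)$, which is the claimed right-hand side.

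The one point needing care, and the only non-mechanical step, is the choice of branch when passing from the squared constancy identities to their square roots: these fix the two sides up to an overall sign, so the magnitudes match automatically and it remains to pin the sign to $+$. For this I would invoke that \eqref{eq:duality} is an \emph{exact signed} identity, so that the sign of the ratio $\sR_\nu(x;\beta,N)/\sR_{\nut}(\xt;\bet,N)$ is precisely the sign of $D_\nu(\beta,N)/D_{\nut}(\bet,N)$, while $r_\nu(\beta,N)$, $r_{\nut}(\bet,N)$ and the square-rooted weights are positive on the orthogonality set. One then checks that $D_\nu(\beta,N)$ and $D_{\nut}(\bet,N)$ carry the same sign by tracking each Pochhammer block separately (the factors $(-N)_{|\nu|}$ and $(-N-\beta_0)_{|\nu|}$ contributing controlled signs such as $(-1)^{|\nu|}$, and similarly for $\nut$), which forces the plus sign and finishes the argument. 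Everything apart from this sign verification is a routine substitution guided by the abbreviation $D_\nu(\beta,N)$.
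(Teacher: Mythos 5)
Your proposal is correct and follows essentially the same route as the paper, whose entire proof consists of the remark that the corollary is obtained "by combining Proposition~\ref{prop:duality} with the last equation" — i.e.\ precisely the two ingredients you use, the signed duality \eqref{eq:duality} together with the constancy identity and its image under the involution. Your explicit treatment of the branch-of-square-root issue (checking that $D_\nu(\beta,N)$ and $D_{\nut}(\bet,N)$ carry the same sign, which holds under the positivity conditions on the parameters that are in any case implicit in writing $\sqrt{w_\sR}$ and in calling $r_\nu$ a norm) is a detail the paper leaves tacit, and you resolve it correctly.
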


\begin{rem} \label{rem:Racah2}
Following Tratnik \cite{Tr2}, let us define new variables, indices and parameters by 
\begin{equation}
\begin{alignedat}{2}
   x'_j&=N-x_{d+1-j} && \text{  for } 
                                 j=1,\dots, d,\label{eq:conjvariables}\\
\nu'_j&=\nu_{d+1-j} &&\text{  for } j=1,\dots, d,  \\
\b_j'&=-2N-\b_{d+1-j} &&\text{  for }j=0,1,\dots, d+1. 
\end{alignedat}
\end{equation}
Then it is easy to check that
\begin{equation}
\sR_{\nu}(x;\beta,N)=\sR'_{\nu'}(x';\beta',N),
\end{equation}
where $\sR'$ is defined by
\begin{align*}
  & \sR'_\nu(x;\beta, N):=   \prod_{j=1}^d (2 |\bnu^{j+1}| +\b_{d+1}-\b_j)_{\nu_j} \\
   & \qquad\qquad \qquad\qquad \times (|\bnu^{j+1}|-N-\b_{j-1}-x_{j-1})_{\nu_j}(|\bnu^{j+1}|-N+x_{j-1} )_{\nu_j} \\
      &   \times  
       {}_4F_3 \left (\begin{matrix} -\nu_j, \nu_j+  2 |\bnu^{j+1}| +\b_{d+1}-\b_{j-1}-1,   |\bnu^{j+1}|-N+x_j,   |\bnu^{j+1}| -N-\b_j-x_j\\
      2 |\bnu^{j+1}| +\b_{d+1}-\b_j, |\bnu^{j+1}|-N-\b_{j-1}-x_{j-1},  |\bnu^{j+1}|-N+x_{j-1} \end{matrix}; 1 \right).
\end{align*}
Since the Racah weights $w_{\sR}(x',\b',N)$ and $w_{\sR}(x,\b,N)$ differ by a factor independent of $x$, we see that the last formula defines a second family of Racah polynomials of $d$ variables. In particular, if we combine this with Corollary~\ref{cor:dualorthogonality}, we obtain
\begin{equation}\label{eq:dualorthogonality2}
\sqrt{w_\sR(x; \beta, N)}\, \wh \sR_{\nu}(x; \beta, N)=\sqrt{w_\sR(\xt'; \bet', N)}\, \wh \sR'_{\nut'}(\xt'; \bet', N),
\end{equation}
where 
\begin{equation}
\begin{alignedat}{2}
\xt_j'&=|\bnu_{j}| && \text{  for } 
                                 j=1,\dots, d,\label{eq:dualvariables2}\\
\nut_j'&=x_{j+1}-x_{j} &&\text{  for } j=1,\dots, d,  \\
\bet_j'&=\beta_{j+1}-\beta_{0}-1 &&\text{  for }j=0,\dots, d, \\
\bet_{d+1}'&=-2N-\beta_0. 
\end{alignedat}
\end{equation}
\end{rem} 

\section{Jacobi polynomials of two variables}\label{se4}
\setcounter{equation}{0} 

In this section we consider the Jacobi polynomials on the triangle, or simplex when $d=2$. In this case, 
the basis \eqref{eq:Pnu} of $\CV_n^2(W_\k)$ consists of $P_{\nu}$ for $\nu_1+\nu_2 = n$. 
For later use, however, we will write $\nu = (n-j,j)$, so that the polynomials in \eqref{eq:Pnu} become 
\begin{equation} \label{eq:Pjn}
   P_{n-j,j}^\k(x_1,x_2) = P_{n-j}^{(\k_2+\k_3+2j+1,\k_1)}(2x_1-1) (1-x_1)^j P_j^{(\k_3,\k_2)}\left (\frac{2 x_2}{1-x_1}-1\right)
\end{equation} 
for $0 \le j \le n$. We use the usual notation for permutations to denote the elements of the symmetric group $S_3$, 
$$
  S_3 = \{ (1), (12), (13), (23), (123), (132)\}.
$$
Let $\tau$ be an element of $S_3$ and $x=(x_1,x_2) \in \RR^2$. By Definition~\ref{ppp}, $\tau x = (x_{\tau(1)}, x_{\tau(2)})$, 
where $x_3 = 1-x_1-x_2$. To simplify the notation, we define
$$
  P_{n-j,j}^{\tau; \kappa} (x_1,x_2) :=  P_{n-j,j}^{\tau \kappa}(x_{\tau(1)}, x_{\tau(2)}).  
$$
Accordingly, we denote the connection coefficients by $c_{j,m}^\tau(\k, n)$ and  \eqref{defn:cc} becomes 
\begin{equation} \label{eq:con-coe-2d}
   P_{n-j,j}^{\tau;\k}(x_1,x_2) = \sum_{m=0}^n c_{j,m}^{\tau}(\k,n) P_{n-m,m}^{\k}(x_1,x_2). 
\end{equation}

Under permutations of the basis $P_{n-j,j}^\k$, there are essentially three distinct families of orthogonal polynomials. 
The first one consists of $P_{n-j,j}^\k$ given above, the second one consists of   
\begin{equation} \label{eq:Pjn(12)}
 P_{n-j,j}^{(12);\k} (x_1,x_2)  =  P_{n-j}^{(\k_1+\k_3+2j+1,\k_2)}(2x_2-1) (1-x_2)^j P_j^{(\k_3,\k_1)}\left (\frac{2 x_1}{1-x_2}-1\right)
\end{equation}
for $0 \le j \le n$, and the third one consists of
\begin{equation} \label{eq:Pjn(13)}
 P_{n-j,j}^{(13);\k} (x_1,x_2)  =  P_{n-j}^{(\k_1+\k_2+2j+1,\k_3)}(1-2x_1-2x_2) (x_1+x_2)^j P_j^{(\k_1,\k_2)}\left (\frac{2 x_2}{x_1+x_2}-1\right) 
\end{equation}
for $0 \le j \le n$. Indeed, there are essentially these three families since it is easy to see that 
\begin{align} \label{eq:trivial-2d}
    P_{n-j,j}^{(23);\k} (x_1,x_2) &= (-1)^j P_{n-j,j}^{\k} (x_1,x_2), \notag \\
    P_{n-j,j}^{(123);\k} (x_1,x_2) &= (-1)^j P_{n-j,j}^{(12);\k} (x_1,x_2), \\
    P_{n-j,j}^{(132);\k} (x_1,x_2) &= (-1)^j P_{n-j,j}^{(13);\k} (x_1,x_2).  \notag  
\end{align}
Since  $(123)=(13)(12)$ and $(132) = (12)(13)$, the last two identities can also be derived from $P_{n-j,j}^{(12);\k}$ and $P_{n-j,j}^{(13);\k}$, which 
explains our notation $\tau\k$, that is, writing the group action on the left side.   

\begin{lem}
For $0 \le j \le n$, 
\begin{align} \label{eq:Pk(12)}
P_{n-j,j}^{(12);\k} (1,x_2)= & (-1)^{n-j} \frac{(\k_2+1)_{n-j}(\k_3+1)_j}{(n-j)!j!} \sum_{m=0}^n
    \frac{(-n)_m (n+|\k|+2)_m}{m!(\k_2+1)_m} \\
     & \times {}_4F_3 \left (\begin{matrix} -m, m+\k_2+\k_3+1, -j, j+\k_1+\k_3+1 \\-n, \k_3+1, n+|\k|+2 \end{matrix}; 1 \right) x_2^m.
     \notag 
\end{align}
\end{lem}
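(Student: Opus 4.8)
The plan is to evaluate the polynomial \eqref{eq:Pjn(12)} at $x_1=1$, turn each of its two Jacobi factors into a terminating ${}_2F_1$ in $x_2$, take the Cauchy product of the two series, and finally recognize the coefficient of $x_2^m$ as the ${}_4F_3$ in the statement after a single application of the Whipple transformation \eqref{Whipple}.

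First I would substitute $x_1=1$ in \eqref{eq:Pjn(12)}. The inner Jacobi argument becomes $\frac{2}{1-x_2}-1=\frac{1+x_2}{1-x_2}$, and applying the representation \eqref{Jacobi2} to $P_j^{(\k_3,\k_1)}$ at this point produces a factor $(1-x_2)^{-j}$ that cancels the prefactor $(1-x_2)^j$, giving
\[
(1-x_2)^j P_j^{(\k_3,\k_1)}\!\left(\tfrac{1+x_2}{1-x_2}\right)=\frac{(\k_3+1)_j}{j!}\,{}_2F_1\!\left(\begin{matrix} -j, -j-\k_1 \\ \k_3+1\end{matrix};x_2\right).
\]
For the outer factor I would combine the reflection $P_N^{(\a,\b)}(x)=(-1)^N P_N^{(\b,\a)}(-x)$ with \eqref{Jacobi1} to obtain
\[
P_{n-j}^{(\k_1+\k_3+2j+1,\k_2)}(2x_2-1)=\frac{(-1)^{n-j}(\k_2+1)_{n-j}}{(n-j)!}\,{}_2F_1\!\left(\begin{matrix} -(n-j), n+|\k|+j+2 \\ \k_2+1\end{matrix};x_2\right).
\]
Multiplying these two displays already reproduces the scalar prefactor $(-1)^{n-j}\frac{(\k_2+1)_{n-j}(\k_3+1)_j}{(n-j)!\,j!}$ appearing in the claim.

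It then remains to extract the coefficient of $x_2^m$ from the product of the two ${}_2F_1$'s. Writing out the Cauchy product and rewriting every factor of the form $(a)_{m-q}$ by means of $(a)_{m-q}=(-1)^q(a)_m/(1-a-m)_q$ (which cancels all the sign factors), I expect this coefficient to come out as
\[
\frac{(j-n)_m(n+|\k|+j+2)_m}{(\k_2+1)_m\, m!}\;\fFt{-m}{-j}{-j-\k_1}{-\k_2-m}{1-j+n-m}{-n-|\k|-j-1-m}{\k_3+1}.
\]
A parameter count shows that both this ${}_4F_3$ and the one in the statement are terminating and Saalschützian (lower-minus-upper parameter sum equal to $1$), so they are linked by a Whipple/Sears transformation.

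The crux is to apply \eqref{Whipple} with the specialization $Z=-j$, $U=\k_3+1$, $V=-n$, $W=n+|\k|+2$, $X=m+\k_2+\k_3+1$, $Y=j+\k_1+\k_3+1$: one checks that $U-X=-\k_2-m$, $U-Y=-j-\k_1$, $1-V+Z-m=1-j+n-m$ and $1-W+Z-m=-n-|\k|-j-1-m$, so \eqref{Whipple} carries the ${}_4F_3$ of the statement precisely into the one displayed above. Reconciling the two scalar prefactors then reduces to the elementary reflection $(a)_m=(-1)^m(1-a-m)_m$ applied to $(1-j+n-m)_m$ and $(-n-|\k|-j-1-m)_m$, which turns them into $(-1)^m(j-n)_m$ and $(-1)^m(n+|\k|+j+2)_m$, the two signs cancelling. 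I expect the only delicate point to be this parameter matching in \eqref{Whipple} — verifying that all three lower entries and the distinguished upper entry line up — while the Pochhammer bookkeeping in the Cauchy product, though tedious, is routine.
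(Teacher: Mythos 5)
Your proof is correct, and it shares the paper's overall skeleton: evaluate \eqref{eq:Pjn(12)} at $x_1=1$ via \eqref{Jacobi1}--\eqref{Jacobi2} (your two displays, including the cancellation of $(1-x_2)^{\pm j}$ and the prefactor $(-1)^{n-j}\tfrac{(\k_2+1)_{n-j}(\k_3+1)_j}{(n-j)!\,j!}$, match the paper exactly), then read off the coefficient of $x_2^m$ in the Cauchy product and transform the resulting ${}_4F_3$. Where you genuinely diverge is the middle step. The paper pulls the index-$m$ Pochhammer symbols out of the \emph{second} ${}_2F_1$ (the one with lower parameter $\k_3+1$), which produces a ${}_4F_3$ with lower parameters $\k_2+1,\;1-m+j,\;1-m+\k_1+j$, and then needs the \emph{iterated} Whipple identity \cite[(4.5)]{GI} to reach the ${}_4F_3$ in \eqref{eq:Pk(12)}. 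You instead pull them out of the \emph{first} ${}_2F_1$ (lower parameter $\k_2+1$), getting the mirror-image ${}_4F_3$ with lower parameters $1-j+n-m,\;-n-|\k|-j-1-m,\;\k_3+1$; this one is connected to the statement's ${}_4F_3$ by a \emph{single} application of \eqref{Whipple}, and your parameter check ($Z=-j$, $U=\k_3+1$, $V=-n$, $W=n+|\k|+2$, $X=m+\k_2+\k_3+1$, $Y=j+\k_1+\k_3+1$, both series balanced) is accurate, as is the final scalar reconciliation via $(1+n-j-m)_m=(-1)^m(j-n)_m$ and $(-n-|\k|-j-1-m)_m=(-1)^m(n+j+|\k|+2)_m$. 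What your route buys is self-containedness: it uses only the identity \eqref{Whipple} already stated and used in \secref{se3} for the duality, whereas the paper's proof imports the doubly-iterated transformation from \cite{GI}.

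One detail you should still patch. The rearrangement $(a)_{m-q}=(-1)^q(a)_m/(1-a-m)_q$ divides by zero when $m>n-j$: there your prefactor $(j-n)_m$ vanishes while your ${}_4F_3$ acquires the nonpositive-integer lower parameter $1+n-j-m$ inside its summation range, a $0\times\infty$ ambiguity. The paper faces the mirror-image problem (at $m>j$, through the lower parameter $1-m+j$) and disposes of it with a one-sentence remark, regarding each term as a meromorphic function of the degree parameter, analytic at the nonnegative integers; you need the same remark with $j$ (equivalently $n-j$) as the continued parameter, noting that the two ends of your chain of identities are polynomials in it. This is a regularization footnote, not a flaw in the method.
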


\begin{proof}
By \eqref{Jacobi1} with $P_n^{(\a,\b)}(-t) = (-1)^n P_n^{(\b,\a)}(t)$ and \eqref{Jacobi2}, it is easy to see that
\begin{align*}
  P_{n-j,j}^{(12);\k} (1,x_2)= & (-1)^{n-j} \frac{(\k_2+1)_{n-j}(\k_3+1)_j}{(n-j)!j!} \\
  & \times  {}_2F_1 \left (\begin{matrix} -n+j, n+j+|\k|+2\\\k_2+1\end{matrix}; x_2 \right) 
      {}_2F_1 \left (\begin{matrix} -j, -j - \k_1 \\ \k_3+1\end{matrix}; x_2 \right). 
\end{align*}
Writing the product of the two ${}_2F_1$ as a single sum and rearranging the terms by using
$$
   \frac{(-j)_{m-i}}{(m-i)!} = \frac{(-j)_m(-m)_i}{m!(1-m+j)_i} \quad\hbox{and}\quad
       \frac{(-j-\k_1)_{m-i}}{(\k_3+1)_{m-i}} = \frac{(-j-\k_1)_m(-\k_3-m)_i}{(\k_3+1)_m(1-m+j+\k_1)_i},
$$
we see that the product of the two  ${}_2F_1$ is equal to 
\begin{align*}
   \sum_{m=0}^\infty  \frac{(-j)_m (-j -\k_1)_m}{m!(\k_3+1)_m}  
  {}_4F_3 \left (\begin{matrix} -n+j, n+j+|\k|+2, -m, -\k_3-m\\\k_2+1, 1-m+j, 1-m+\k_1+j \end{matrix}; 1 \right)x_2^m.
\end{align*}
We can think of each term in the above sum as a meromorphic function of $j$, which is analytic at the nonnegative integers. 
This ${}_4F_3$ can be rewritten by using the iterated Whipple identity \cite[(4.5)]{GI}
\begin{align*}
(u)_m(v)_m & (w)_m  {}_4F_3 \left (\begin{matrix} -m, x,y,z\\ u,v,w \end{matrix}; 1 \right) \\
  = \, & (1-x-m)_m(1-v+y-m)_m(1-v+z-m)_m \\
   & \times {}_4F_3 \left (\begin{matrix} -m, w-x,u-x,1-v-m\\ 1-x-m,1-v+y-m,1-v+z-m \end{matrix}; 1 \right)
\end{align*}
with $x = -j$, $y=m+\k_2+\k_3+1$, $z= j+\k_1+\k_3+1$, $u=-n$, $v=\k_3+1$, and $w=n+|\k|+2$, which
leads to the ${}_4F_3$ in \eqref{eq:Pk(12)} with a constant that can be combined with the constant in
front of the original ${}_4F_3$. Putting these together proves \eqref{eq:Pk(12)}.
\end{proof}

We are now ready to derive explicit formulas for the connection coefficients.

\begin{prop}
For  $0 \le j \le n$, the connection coefficients for $\tau = (12)$ satisfy
\begin{equation} \label{eq:C(12)}
 c_{j,m}^{(12)}(\k,n) = D_{j,m}^n(\k)  {}_4F_3 \left (\begin{matrix} -m, m+\k_2+\k_3+1, -j, j+\k_1+\k_3+1 \\-n, \k_3+1, n+|\k|+2 \end{matrix}; 1 \right),
\end{equation}
where the constant $D_{j,m}^n(\k)$ is given by
\begin{align} \label{eq:Ajm}
 D_{j,m}^n(\k) = & (-1)^{n+m} \frac{(-n)_j (\k_2+1)_{n-j}(\k_3+1)_j}{j! (\k_2+1)_m} \\
  & \times  \frac{ (n+|\k|+2)_m}{(\k_2+\k_3+2m+2)_{n-m}(\k_2+\k_3+m+1)_m}. \notag
\end{align}
\end{prop}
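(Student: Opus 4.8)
The plan is to exploit the fact that the connection relation \eqref{eq:con-coe-2d} is an identity between genuine polynomials in $(x_1,x_2)$, so it survives the substitution $x_1=1$, and that under this substitution the basis $\{P_{n-m,m}^\k\}$ collapses into a family of monomials in $x_2$. Concretely, the first step is to show that
\[
   P_{n-m,m}^\k(1,x_2) = \frac{(\k_2+\k_3+2m+2)_{n-m}}{(n-m)!}\cdot\frac{(\k_2+\k_3+m+1)_m}{m!}\, x_2^m .
\]
This follows from \eqref{eq:Pjn} by two observations: the leading Jacobi factor is evaluated at argument $2x_1-1=1$, where $P_{n-m}^{(\a,\b)}(1)=(\a+1)_{n-m}/(n-m)!$; and, after rewriting $P_m^{(\k_3,\k_2)}$ via \eqref{Jacobi2}, the prefactor $\bigl((1+t)/2\bigr)^m$ with $t=2x_2/(1-x_1)-1$ equals $x_2^m/(1-x_1)^m$, which exactly cancels the factor $(1-x_1)^m$ in \eqref{eq:Pjn}. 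The residual ${}_2F_1$ is then evaluated at $(t-1)/(t+1)=(x_1+x_2-1)/x_2$, which equals $1$ at $x_1=1$, and Chu--Vandermonde gives ${}_2F_1(-m,-m-\k_2;\k_3+1;1)=(\k_2+\k_3+m+1)_m/(\k_3+1)_m$. This is the step I expect to be the main obstacle, since the intermediate argument $2x_2/(1-x_1)-1$ blows up as $x_1\to1$, and one must pass through the representation \eqref{Jacobi2} to recognize that the limit is in fact a clean monomial.

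Granting this, setting $x_1=1$ in \eqref{eq:con-coe-2d} turns the right-hand side into $\sum_{m=0}^n c_{j,m}^{(12)}(\k,n)\,c_m\, x_2^m$, where $c_m$ is the constant displayed above, while the left-hand side $P_{n-j,j}^{(12)\k}(1,x_2)$ is supplied explicitly by the preceding lemma, namely \eqref{eq:Pk(12)}, as a power series in $x_2$ whose $x_2^m$ coefficient carries precisely the ${}_4F_3$ appearing in \eqref{eq:C(12)}. Since the monomials $\{x_2^m\}_{m=0}^n$ are linearly independent, I would equate the coefficient of $x_2^m$ on both sides; this isolates $c_{j,m}^{(12)}(\k,n)$ as the coefficient from \eqref{eq:Pk(12)} divided by $c_m$, with the ${}_4F_3$ factor carried through unchanged.

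Finally I would simplify the resulting constant into the closed form $D_{j,m}^n(\k)$ of \eqref{eq:Ajm}. This is routine Pochhammer bookkeeping: using $(-n)_m(n-m)!=(-1)^m n!$ and $n!/(n-j)!=(-1)^j(-n)_j$ converts the factorials $(n-j)!$, $(n-m)!$ and the Pochhammer $(-n)_m$ arising from \eqref{eq:Pk(12)} and from $c_m$ into the factors $(-1)^{n+m}(-n)_j$ while collecting the sign, after which the stated expression for $D_{j,m}^n(\k)$ emerges directly. Because the hypergeometric factor is never touched in this reduction, the formula \eqref{eq:C(12)} follows at once, and the argument is entirely elementary, as promised.
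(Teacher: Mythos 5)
Your proposal is correct and takes essentially the same route as the paper: set $x_1=1$ in \eqref{eq:con-coe-2d}, observe that $P_{n-m,m}^\k(1,x_2)$ collapses to the monomial multiple in \eqref{eq:Pk(1,x2)}, feed in the lemma \eqref{eq:Pk(12)} for the left-hand side, and compare coefficients of $x_2^m$; your Pochhammer bookkeeping for $D_{j,m}^n(\k)$ checks out. The only (immaterial) difference is how the monomial collapse is justified --- you pass through \eqref{Jacobi2} and Chu--Vandermonde, while the paper evaluates $P_{n-m}^{(\k_2+\k_3+2m+1,\k_1)}(1)$ and uses the leading coefficient of $P_m^{(\k_3,\k_2)}$, which are equivalent elementary computations.
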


\begin{proof}
Setting $x_1 =1$ shows that $P_{n-m,m}^\k(1,x_2)$ is equal to a multiple of $x_2^m$, where the constant is determined
by $P_{n-j}^{(\k_2+\k_3+2j+1,\k_1)}(1)$ and the leading coefficient of $P_j^{(\k_3,\k_2)}$. More precisely, 
\begin{equation} \label{eq:Pk(1,x2)}
   P_{n-m,m}^\k(1,x_2) = \frac{(\k_2+\k_3+2m+2)_{n-m} (\k_2+\k_3+m+1)_m}{(n-m)! m!} x_2^m.
\end{equation}
Hence, the right hand side of \eqref{eq:con-coe-2d} when $x_1 =1$ is a polynomial of degree $n$ in $x_2$, so is the right hand 
side, by \eqref{eq:Pk(12)}, of \eqref{eq:con-coe-2d}. Comparing the coefficients of $x_2^m$ in the two sides proves the stated result. 
\end{proof}

The ${}_4F_3$ in the lemma is a Racah polynomial of degree $j$ in $m$ or a Racah polynomial of degree $m$ in $j$. More
precisely, let us state the connection coefficients in terms of the orthonormal Racah polynomials. 

\begin{cor} \label{cor:C(12)d=2}
For  $0 \le j \le n$, the connection coefficients for $\tau = (12)$ satisfy
$$
\wh  c_{j,m}^{(12)}(\k) = (-1)^{n+m+j}\sqrt{w_R(m; \sigma_1)} \wh R_j(\l(m); \s_1), \qquad  \l(m) = m (m+\k_2+\k_3+1)
$$
where $\s_1 = (-n-1, n+\k_1+\k_3+1, \k_3,\k_2)$, and 
$$
\wh  c_{j,m}^{(12)}(\k) = (-1)^{n+m+j}\sqrt{w_R(j; \sigma_2)} \wh R_m(\l(j); \s_2), \qquad  \l(j) = j (j+\k_1+\k_3+1)
$$
where $\s_2 = (-n-1, n+\k_2+\k_3+1, \k_3,\k_1)$.
\end{cor}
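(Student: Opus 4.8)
The statement to prove is Corollary~\ref{cor:C(12)d=2}, which rewrites the normalized connection coefficients $\wh c_{j,m}^{(12)}(\k)$ in terms of the orthonormal one-variable Racah polynomials. The raw material is already in hand: Proposition giving the unnormalized $c_{j,m}^{(12)}(\k,n)$ as the constant $D_{j,m}^n(\k)$ times a balanced ${}_4F_3$, together with the normalization recipe $\wh c_{\nu,\mu}^{\,\tau}(\k)=\sqrt{A_\mu(\k)/A_\nu(\tau\k)}\,c_{\nu,\mu}^\tau(\k)$ from \eqref{eq:conn-c-coeff}. So the proof is essentially a bookkeeping computation: match the ${}_4F_3$ in \eqref{eq:C(12)} to the Racah polynomial $R_n(\l(x);\a,\b,\g,\d)$ from Section~\ref{se3}, and then absorb all the Pochhammer prefactors into the weight $w_R$ and the Racah norm.

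\emph{First} I would identify the Racah parameters. The defining ${}_4F_3$ in \eqref{eq:C(12)} has top row $-m,\,m+\k_2+\k_3+1,\,-j,\,j+\k_1+\k_3+1$ and bottom row $-n,\,\k_3+1,\,n+|\k|+2$. Reading this against $R_j(\l(m);\a,\b,\g,\d)={}_4F_3(-j,\,j+\a+\b+1,\,-m,\,m+\g+\d+1;\,\a+1,\,\b+\d+1,\,\g+1;1)$, the degree index is $j$ and the variable is $m$ (or vice versa, by the evident symmetry of the ${}_4F_3$ under swapping the two pairs, which is exactly why both halves of the corollary hold). This forces $\a+\b+1=\k_1+\k_3+1$, $\g+\d+1=\k_2+\k_3+1$, and matching the three bottom entries to $\a+1,\b+\d+1,\g+1$ gives $\s_1=(-n-1,\,n+\k_1+\k_3+1,\,\k_3,\,\k_2)$ as claimed; one of the bottom parameters being $-n$ is precisely the truncation condition $-N$ with $N=n$.

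\emph{Second}, and this is where the real work sits, I would verify the constant. The assertion is that
\[
  \wh c_{j,m}^{(12)}(\k)=(-1)^{n+m+j}\sqrt{w_R(m;\s_1)}\,\wh R_j(\l(m);\s_1),
\]
so after substituting $\wh R_j = R_j/r_j$ and $\wh c = \sqrt{A_m(\k)/A_{(n-j,j)}((12)\k)}\,c$, the entire identity reduces to checking that
\[
  (-1)^{n+m}\sqrt{\frac{A_m(\k)}{A_{(n-j,j)}((12)\k)}}\,D_{j,m}^n(\k)
  \;=\;(-1)^{n+m+j}\,\frac{\sqrt{w_R(m;\s_1)}}{r_j(\s_1)}.
\]
The main obstacle is the algebra: $D_{j,m}^n(\k)$ from \eqref{eq:Ajm} is a quotient of many Pochhammer symbols, $A_\nu$ is given by \eqref{eq:norm1}, and $w_R$ and $r_j$ come from \eqref{eq:Racah-weight} and the one-variable Racah norm. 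The cleanest route is to not fight both halves at once: establish the $\s_1$ statement directly, then obtain the $\s_2$ statement either by the symmetry $j\leftrightarrow m$ in the ${}_4F_3$ (which swaps the roles of degree and variable and produces $\s_2$ from $\s_1$), or independently by the same parameter match.

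A useful sanity check that pins down the overall constant without grinding every factor is the orthogonality of Proposition~\ref{prop:descreteOP}: since $\mathbf C$ is an orthogonal matrix, both $\sum_m |\wh c_{j,m}^{(12)}|^2=1$ and the Racah orthonormality $\sum_m w_R(m;\s_1)\,\wh R_j(\l(m);\s_1)^2=1$ hold, so the proposed identity is consistent with the normalization and the only remaining freedom is a $j$-dependent sign, which the $(-1)^{n+m+j}$ factor records. I would therefore match the $m$-dependence (the weight factor) and the $j$-dependence (the norm factor) separately, using the connection \eqref{eq:connection} between the two Racah conventions to translate $w_R$ and $r_j$ into the Pochhammer symbols appearing in $D_{j,m}^n$, $A_m(\k)$, and $A_{(n-j,j)}((12)\k)$, and confirm that all factors cancel up to the asserted sign. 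The sign itself is tracked by the $(-1)^{n-j}$ in the Lemma together with the $(-1)^{n+m}$ in $D_{j,m}^n$.
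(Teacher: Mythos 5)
Your proposal is correct and is essentially the paper's own (implicit) argument: the paper likewise treats the corollary as an immediate consequence of \eqref{eq:C(12)}, recognizing the balanced ${}_4F_3$ there as $R_j(\l(m);\s_1)=R_m(\l(j);\s_2)$ and folding $D_{j,m}^n(\k)$ together with the normalization factor $\sqrt{A_{(n-m,m)}(\k)/A_{(n-j,j)}((12)\k)}$ from \eqref{eq:conn-c-coeff} into $\sqrt{w_R}$ and the Racah norm, with the orthogonality \eqref{eq:discreteOP} pinning the $m$-independent factor up to sign --- exactly the device the paper deploys again for Proposition~\ref{prop:C(123)d=3} and Theorem~\ref{thm:C(cyclic)}. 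One bookkeeping correction: $D_{j,m}^n(\k)$ in \eqref{eq:Ajm} already carries the factor $(-1)^{n+m}$ (and $(-n)_j$ contributes $(-1)^j$, so $D_{j,m}^n(\k)$ has overall sign $(-1)^{n+m+j}$), hence your displayed reduction should not have an extra $(-1)^{n+m}$ on the left, and since the paper's $r_j$ denotes the \emph{square} of the norm, the right-hand side should read $(-1)^{n+m+j}\sqrt{w_R(m;\s_1)}\big/\sqrt{r_j(\s_1)}$.
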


\begin{prop} \label{prop:c-c-2d}
For $0 \le j \le n$, let $c_{j,m}^\tau(\k) = c_{j,m}^\tau(\k, n)$. Then 
\begin{align}\label{eq:not(12)}
\begin{split}
 c_{j,m}^{(13)}(\k) &= (-1)^{m+j} c_{j,m}^{(12)} ((23)\k), \qquad  c_{j,m}^{(123)}(\k) = (-1)^{j} c_{j,m}^{(12)} (\k), \\
 c_{j,m}^{(23)}(\k) & =  (-1)^j \delta_{j,m}, \qquad c_{j,m}^{(132)} (\k) =(-1)^{j} c_{j,m}^{(13)} (\k). 
\end{split}
\end{align}
\end{prop}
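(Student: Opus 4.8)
The plan is to derive all four identities from the elementary relations \eqref{eq:trivial-2d} together with the convolution rule of Proposition~\ref{prop:convolu}; only the first identity, for $\tau=(13)$, requires genuine work, while the remaining three follow by matching coefficients against the linearly independent (indeed mutually orthogonal) basis $\{P_{n-m,m}^\k\}$.

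First I would dispose of the three ``sign-flip'' relations. By \eqref{eq:trivial-2d} we have $P_{n-j,j}^{(23)\k}=(-1)^j P_{n-j,j}^{\k}$; substituting this into the defining expansion \eqref{eq:con-coe-2d} for $\tau=(23)$ and comparing coefficients of the $P_{n-m,m}^\k$ gives $c_{j,m}^{(23)}(\k)=(-1)^j\delta_{j,m}$. In the same manner $P_{n-j,j}^{(123)\k}=(-1)^j P_{n-j,j}^{(12)\k}=(-1)^j\sum_{m} c_{j,m}^{(12)}(\k)\,P_{n-m,m}^\k$ yields $c_{j,m}^{(123)}(\k)=(-1)^j c_{j,m}^{(12)}(\k)$, and likewise $P_{n-j,j}^{(132)\k}=(-1)^j P_{n-j,j}^{(13)\k}$ gives $c_{j,m}^{(132)}(\k)=(-1)^j c_{j,m}^{(13)}(\k)$. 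Each is immediate once \eqref{eq:trivial-2d} is granted, so these carry no real difficulty.

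The substantive step is the $(13)$ identity, and the key is to pick a factorization of $(13)$ in $S_3$ whose convolution collapses. I would use $(13)=(23)(123)$ (apply $(123)$ first, then $(23)$) and feed $\tau_1=(23)$, $\tau_2=(123)$ into \eqref{eq:c-convolution}, obtaining $c_{j,m}^{(13)}(\k)=\sum_{\omega} c_{j,\omega}^{(123)}((23)\k)\,c_{\omega,m}^{(23)}(\k)$. Because $c_{\omega,m}^{(23)}(\k)=(-1)^\omega\delta_{\omega,m}$ from the previous step, the sum collapses to the single term $\omega=m$, contributing a factor $(-1)^m$; then the already-established $(123)$ relation, applied with parameter $(23)\k$, converts $c_{j,m}^{(123)}((23)\k)$ into $(-1)^j c_{j,m}^{(12)}((23)\k)$, and multiplying the two signs gives $c_{j,m}^{(13)}(\k)=(-1)^{m+j}c_{j,m}^{(12)}((23)\k)$, as claimed.

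The only place demanding care is the choice of factorization: most decompositions of $(13)$ (for instance $(13)=(123)(12)$) produce a genuine convolution sum against the non-diagonal coefficients $c^{(123)}$ or $c^{(12)}$ and lead nowhere, whereas pairing $(13)$ with the transposition $(23)$—whose coefficients are diagonal by the relation above—is exactly what makes one factor collapse. I would therefore fix the composition convention first (matching the paper's $(123)=(13)(12)$), verify $(23)(123)=(13)$ in that convention, and only then invoke the convolution; everything after that is routine bookkeeping of the signs $(-1)^j$ and $(-1)^m$.
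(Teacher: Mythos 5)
Your proof is correct, but it handles the one substantive case, $\tau=(13)$, by a genuinely different route than the paper. You deduce the three sign-flip identities from \eqref{eq:trivial-2d} exactly as the paper does (the paper simply says that by \eqref{eq:trivial-2d} only $(13)$ needs proof), and then you obtain the $(13)$ identity purely algebraically: factor $(13)=(23)(123)$, apply the convolution rule \eqref{eq:c-convolution} with $\tau_1=(23)$, $\tau_2=(123)$, and let the diagonal coefficients $c^{(23)}_{\omega,m}(\k)=(-1)^{\omega}\delta_{\omega,m}$ collapse the sum. Your group-theoretic bookkeeping checks out: in the paper's composition convention (rightmost factor acts first, consistent with $(123)=(13)(12)$ and with how Proposition~\ref{prop:convolu} is invoked elsewhere), one indeed has $(23)(123)=(13)$, and the two signs multiply to $(-1)^{j+m}$ with the parameter correctly shifted to $(23)\k$. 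The paper instead proves the $(13)$ case analytically: it restricts to the edge $x_1=1$ of the triangle, where $P^{\k}_{n-m,m}(1,x_2)$ is a multiple of $x_2^{m}$ by \eqref{eq:Pk(1,x2)}, uses the Jacobi reflection symmetry to show $P^{(13)\k}_{n-j,j}(1,-x_2)=(-1)^{j}(23)_{\k}P^{(12)\k}_{n-j,j}(1,x_2)$, and reads off the coefficients. What each approach buys: the paper's argument is self-contained at the level of explicit Jacobi formulas and exposes the structural reason for the identity (the reflection $x_2\mapsto -x_2$ realizes the transposition $(23)$ on the parameters); yours avoids all polynomial computation beyond \eqref{eq:trivial-2d}, at the cost of importing Proposition~\ref{prop:convolu} (which is proved independently in Section~\ref{se2}, so there is no circularity). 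Your closing remark about the choice of factorization is also on point: the decomposition $(13)=(123)(12)$ yields a genuine sum — this is precisely what the paper exploits in dimension three in \eqref{eq:C(13)d=3} — whereas only a factorization through the diagonal $(23)$ coefficients collapses, and the alternative collapsing factorization $(13)=(132)(23)$ would be circular since $c^{(132)}$ is defined off of $c^{(13)}$.
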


\begin{proof}
By \eqref{eq:trivial-2d}, only the case $(13)$ needs a proof. Directly from the definition and using the fact that $P_j^{(\k_1,\k_2)}(\frac{-2 x_2}{1-x_2}-1) =(-1)^j P_j^{(\k_2,\k_1)}(\frac2{1-x_2}-1)$, it follows that 
$$
   P_{n-j,j}^{(13);\k}(1,-x_2) = (-1)^j  (23)_\k P_{n-j,j}^{(12);\k}(1,x_2),
$$
where $(23)_\k g(\k) = g((23)\k)$ and the subscript $\k$ indicates that the transposition is acting on $\k$. 
Moreover, \eqref{eq:Pk(1,x2)} shows that 
$$
  P_{n-m,m}^{\k}(1,-x_2) = (-1)^m P_{n-m,m}^{\k}(1,x_2)  
$$
is invariant under the action of $(23)$. Consequently, setting $x_1 =1$ and replacing
$x_2$ by $-x_2$ in \eqref{eq:con-coe-2d} and applying $(23)$ on $\k$, we obtain
$$
 (-1)^j P_{n-j,j}^{(12);\k}(1,x_2) = \sum_{m=0}^n c_{j,m}^{(13)}((23)\k) (-1)^m P_{n-m,m}^{\k}(1,x_2),
$$
which shows that $c_{j,m}^{(13)}((23)\k) = (-1)^{j+m} c_{j,m}^{(12)}(\k)$ by the definition of $c_{j,m}^{(12)}(\k)$.  
\end{proof}

Since $(12)(13) = (132)$, the relation \eqref{eq:c-convolution} can be used to derive a summation formula between
the Racah polynomials. Recall that the weight function for Racah polynomials is given in 
\eqref{eq:Racah-weight}. We define a related weight function $w^*$ by
$$
     w(x; \a,\b,\g, \g) = w^*(x;\a,\b,\g,\d) \frac{(\g+1)_x}{(\d+1)_x}.
$$
 
\begin{prop} \label{prop:4.5}
Let $u(x;\k,n): = w^*(x; -n-1,n+|\k|-\k_1+1,\k_3,\k_1)$. Then 
\begin{align} \label{eq:sum4F3}
  \sum_{m=0}^n (-1)^m u(m;\k,n)&  {}_4F_3 \left (\begin{matrix} -m, m+\k_1+\k_3+1, -k, k+\k_1+\k_2+1 \\
       -n, \k_1+1, n+|\k|+2 \end{matrix}; 1 \right)  \\
 \times &  {}_4F_3 \left (\begin{matrix} -m, m+\k_1+\k_3+1, -\ell, \ell+\k_2+\k_3+1 \\-n, \k_3+1, n+|\k|+2 \end{matrix}; 1 \right) \notag \\
  = (-1)^{n+k+\ell} & \frac{(\k_2+1)_k (\k_2+1)_\ell}{(\k_2+1)_n} \frac{(\k_1+\k_3+2)_n}{(\k_1+1)_k (\k_3+1)_\ell} \notag \\
  \times & {}_4F_3 \left (\begin{matrix} -k, k+\k_1+\k_2+1, -\ell, \ell+\k_2+\k_3+1 \\-n, \k_2+1, n+|\k|+2 \end{matrix}; 1 \right). \notag
\end{align}
\end{prop}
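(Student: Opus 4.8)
The plan is to read \eqref{eq:sum4F3} as the explicit form of the convolution identity \eqref{eq:c-convolution} for the factorization $(12)(13)=(132)$. Taking $\tau_1=(12)$ and $\tau_2=(13)$ in Proposition~\ref{prop:convolu} gives
\begin{equation*}
  c_{j,\ell}^{(132)}(\k)=\sum_{m=0}^n c_{j,m}^{(13)}\big((12)\k\big)\,c_{m,\ell}^{(12)}(\k).
\end{equation*}
The point is that each of the three coefficients here is, up to a sign and a permutation of the parameters, one of the coefficients $c^{(12)}$ for which \eqref{eq:C(12)}--\eqref{eq:Ajm} already provide a closed form; substituting those forms should turn the identity into a weighted sum of two ${}_4F_3$'s equal to a single ${}_4F_3$, which is exactly \eqref{eq:sum4F3}.

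First I would reduce all three coefficients to the family $(12)$ by means of Proposition~\ref{prop:c-c-2d}. Combining the relations $c_{j,\ell}^{(132)}(\k)=(-1)^j c_{j,\ell}^{(13)}(\k)$ and $c_{j,\ell}^{(13)}(\k)=(-1)^{\ell+j}c_{j,\ell}^{(12)}((23)\k)$ from \eqref{eq:not(12)} yields $c_{j,\ell}^{(132)}(\k)=(-1)^\ell c_{j,\ell}^{(12)}((23)\k)$, while applying $c_{a,b}^{(13)}(\k')=(-1)^{a+b}c_{a,b}^{(12)}((23)\k')$ with $\k'=(12)\k$ together with the left-action identity $(23)\big((12)\k\big)=\big((23)(12)\big)\k=(132)\k$ gives $c_{j,m}^{(13)}((12)\k)=(-1)^{m+j}c_{j,m}^{(12)}((132)\k)$. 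Here $(23)\k=(\k_1,\k_3,\k_2)$ and $(132)\k=(\k_2,\k_3,\k_1)$. After these substitutions the whole identity lives inside the single family $c^{(12)}$, evaluated at the three parameter sets $\k$, $(23)\k$ and $(132)\k$.

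Next I would insert \eqref{eq:C(12)}--\eqref{eq:Ajm}, writing each coefficient as $D^n\cdot{}_4F_3$. Since ${}_4F_3$ is symmetric in its upper parameters and $|\sigma\k|=|\k|$, one checks directly that the three hypergeometric factors are precisely the two ${}_4F_3$'s on the left of \eqref{eq:sum4F3} (the one with denominator parameter $\k_1+1$ coming from $c_{j,m}^{(12)}((132)\k)$ and the one with denominator parameter $\k_3+1$ from $c_{m,\ell}^{(12)}(\k)$) and the ${}_4F_3$ on the right (from $c_{j,\ell}^{(12)}((23)\k)$), after the harmless renaming $j\to k$. What then remains is to show that the scalar prefactor
\begin{equation*}
  W(m):=\frac{(-1)^{m+j}\,D_{j,m}^n((132)\k)\,D_{m,\ell}^n(\k)}{(-1)^\ell\,D_{j,\ell}^n((23)\k)}
\end{equation*}
factors as an $m$-dependent piece proportional to $(-1)^m u(m;\k,n)$ times an $m$-free constant, the reciprocal of which must be the prefactor on the right-hand side of \eqref{eq:sum4F3}.

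The hard part is this last factorization, which is pure Pochhammer bookkeeping but must be done carefully. I expect the factors $(\k_3+1)_m$ coming from $D_{j,m}^n((132)\k)$ and $D_{m,\ell}^n(\k)$ to cancel, and the explicit signs $(-1)^{m+j}$, $(-1)^{n+m}$, $(-1)^{n+\ell}$ (numerator) against $(-1)^{\ell}$, $(-1)^{n+\ell}$ (denominator) to combine into the $m$-free sign $(-1)^{n+j+\ell}$. The genuine $(-1)^m$ of \eqref{eq:sum4F3}, and the denominator factor $(-n-\k_2)_m$ of $u(m;\k,n)=w^*(m;-n-1,n+|\k|-\k_1+1,\k_3,\k_1)$, should then emerge from the reflection $(\k_2+1)_{n-m}=(-1)^m(\k_2+1)_n/(-n-\k_2)_m$, while the half-integer ratio $((\k_1+\k_3+3)/2)_m/((\k_1+\k_3+1)/2)_m=(\k_1+\k_3+2m+1)/(\k_1+\k_3+1)$ implicit in $u$ is matched by rewriting $(\k_1+\k_3+2m+2)_{n-m}$ and $(\k_1+\k_3+m+1)_m$ in terms of $(\k_1+\k_3+\,\cdot\,)$-symbols. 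Once the $m$-dependent part collapses to a constant multiple of $(-1)^m u(m;\k,n)$, the surviving $m$-free factor inverts to the prefactor displayed in \eqref{eq:sum4F3}, with $j$ renamed to $k$. The only real risk is a bookkeeping slip in tracking the three permuted parameter sets and the reversed Pochhammer symbols.
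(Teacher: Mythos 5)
Your proposal follows essentially the same route as the paper's own proof: the same factorization via Proposition~\ref{prop:convolu} with $\tau_1=(12)$, $\tau_2=(13)$, the same reduction of all three coefficients to the $(12)$ family through \eqref{eq:not(12)} (your concrete parameter vector $(\k_2,\k_3,\k_1)$ is exactly the set the paper writes as $(12)(23)\k$, the discrepancy in the cycle's name being only a left- versus right-action labeling convention), the same identification of the three ${}_4F_3$'s, and the same final Pochhammer bookkeeping via the reflection identity $(a)_{n-m}=(-1)^m(a)_n/(1-a-n)_m$ and the duplication formula. The paper likewise stops short of writing out that bookkeeping in full (``We omit the details''), and your outline of where the $(-1)^m$, the factor $(-n-\k_2)_m$, and the half-integer Pochhammer ratio in $u(m;\k,n)$ come from is correct, so the proposal matches the paper's argument step for step.
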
 
 
\begin{proof}
To derive this identity, we use Proposition \ref{prop:convolu} with $\tau_1=(12)$ and $\tau_2=(13)$, so that 
\eqref{eq:c-convolution} becomes 
$$
  c_{k,\ell}^{(132)} (\k,n) =   \sum_{m=0}^n    c_{k,m}^{(13)} ((12)\k,n) c_{m,\ell}^{(12)} (\k,n).
$$
By \eqref{eq:C(12)} and \eqref{eq:not(12)}, all three connection coefficients in this identity are explicitly given in terms of 
${}_4F_3$ series, which are the three ${}_4F_3$ functions in the statement. We still need to check the coefficients in front of 
${}_4F_3$. For the sum in the right hand side, these coefficients are $(-1)^{k+m} D_{k,m}^n((12)(23)\k)D_{m,\ell}^n(\k)$ 
which can be split into a product of two terms, the first term is independent of $m$ and mostly cancels out with the terms in 
$D_{k,\ell}^n((23)\k)$ in the left hand side, whereas the second term is equal to 
$$
 \frac{(n+|\k|+2)_m}{(\k_1+\k_3+2m+2)_{n-m}(\k_1+\k_3+m+1)_m} \frac{ n!(\k_2+1)_{n-m} }{(n-m)!m!}
$$
which can be written as $u(m;\k,n) (\k_2+1)_n/(\k_1+\k_3+2)_n$ using the relations $(a)_{n-m} = (-1)^m (a)_n/(1-a-n)_m$
and $(a)_{2m} = 2^{2m} (\frac{a}2)_m(\f{a+1}2)_m$. We omit the details. 
\end{proof} 
 
One particular interesting case is $\k_1=\k_3$, for which the weight function becomes
$$
u(x;\k,n) = w(x; -n-1,n+|\k|-\k_1+1,\k_3,\k_1)
$$ 
and the two ${}_4F_3$ series in the summation of \eqref{eq:sum4F3} are Racah polynomials of degree $k$ and $\ell$, respectively, 
in the variable $m$ associated with this weigh function. Without the $(-1)^m$ in the summation, the left hand side of 
\eqref{eq:sum4F3} would be zero when $k \ne \ell$. 

\section{Jacobi polynomials of three variables}\label{se5}
\setcounter{equation}{0} 

In this section we consider the case $d=3$, which is complex enough to give us an idea of the complications in high dimensions and
also simple enough that we can have a clear picture for all permutations.  

In this case, the polynomials in \eqref{eq:Pnu} are of the form
\begin{align} \label{eq:OPd=3}
   P_{\nu}^\k(x_1,x_2,x_3) = &\ P_{\nu_1}^{(a_1,\k_1)}(2x_1-1) (1-x_1)^{\nu_2}
        P_{\nu_2}^{(a_2,\k_2)}\left (\frac{2 x_2}{1-x_1}-1\right) \\ 
         & \times (1-x_1-x_2)^{\nu_3}  P_{\nu_3}^{(\k_4,\k_3)}\left (\frac{2 x_3}{1-x_1-x_2}-1\right) \notag
\end{align}
for $|\nu| = \nu_1+\nu_2+\nu_3 = n$, where 
\begin{align*}
  a_1 & = a_1(\nu,\k) = \k_2+\k_3+\k_4+2\nu_2+2\nu_3 + 2,\\
  a_2 & = a_2(\nu,\k) = \k_3+\k_4+2\nu_3+1.
\end{align*}
Let $\tau$ be an element of $S_4$ and $x=(x_1,x_2,x_3) \in \RR^3$. By Definition~\ref{ppp}, $\tau x = (x_{\tau(1)}, x_{\tau(2)},x_{\tau(3)})$, 
where $x_4 = 1-x_1-x_2-x_3$. To simplify the notation, we define
$$
  P_{\nu}^{\tau; \k} (x_1,x_2,x_3) :=  P_{\nu}^{\tau \k}(x_{\tau(1)}, x_{\tau(2)},x_{\tau(3)}).  
$$
There are a total of 24 elements in $S_4$. Under permutations of $P_\mu^\k$, we 
end up with 12 different bases, which are given by 
\begin{equation}\label{eq:S4}
 (1), (12), (13), (14), (23), (24),  (123), (124), (132), (142), (13)(24), (14)(23),
\end{equation}
and bases derived from other permutations can be written in terms of them. Indeed, 
\begin{equation}\label{eq:swap}
\begin{alignedat}{2}
& P_\nu^{(34);\k}(x) = (-1)^{\nu_3} P_\nu^\k(x), && P_\nu^{(12)(34);\k}(x) = (-1)^{\nu_3} P_\nu^{(12);\k}(x),  \\
& P_\nu^{(134);\k}(x) = (-1)^{\nu_3} P_\nu^{(13);\k}(x), && P_\nu^{(143);\k}(x) = (-1)^{\nu_3} P_\nu^{(14);\k}(x), \\
& P_\nu^{(234);\k}(x) = (-1)^{\nu_3} P_\nu^{(23);\k}(x), && P_\nu^{(243);\k}(x) = (-1)^{\nu_3} P_\nu^{(24);\k}(x), \\
&  P_\nu^{(1234);\k}(x) = (-1)^{\nu_3} P_\nu^{(123);\k}(x), && P_\nu^{(1243);\k}(x) = (-1)^{\nu_3} P_\nu^{(124);\k}(x),  \\
& P_\nu^{(1324);\k}(x) = (-1)^{\nu_3}P_\nu^{(13)(24);\k}(x),\quad  && P_\nu^{(1342);\k}(x) = (-1)^{\nu_3} P_\nu^{(132);\k}(x), \\ 
& P_\nu^{(1423);\k}(x) = (-1)^{\nu_3}P_\nu^{(14)(23);\k}(x), \quad && P_\nu^{(1432);\k}(x) = (-1)^{\nu_3} P_\nu^{(142);\k}(x). 
\end{alignedat}
\end{equation}
The polynomials $P_\nu^\k$ possess inherited structures of two variables, which can be utilized to derive formulas for 
connection coefficients from those for orthogonal polynomials of two variables. We start with $c_{\nu,\mu}^{(12)}(\k)$.

\begin{prop}\label{prop:C(12)d=3}
For $|\nu| = |\mu| =n$, 
 \begin{align} \label{eq:C(12)d=3}
    c_{\nu,\mu}^{(12)}(\k)  = & \, \delta_{\nu_3,\mu_3} c_{\nu_2,\mu_2}^{(12)}(\wh \k, n- \nu_3) \qquad\hbox{with}\quad
           \wh \k =(\k_1,\k_2,\k_3+\k_4+2\nu_3+1) \\
           = & \, \delta_{\nu_3,\mu_3} D_{\nu_2,\mu_2}^{n-\nu_3} (\k_1,\k_2,\wh \k_3) \notag \\
        & \times {}_4F_3 \left (\begin{matrix} -\mu_2, \mu_2+\k_2+ \wh \k_3+1, -\nu_2, \nu_2 +\k_1+\wh \k_3+1 \\
              -n + \nu_3, \wh \k_3+1, n+\nu_3+|\k|+3 \end{matrix}; 1 \right)  \notag 
\end{align}
where $c_{\nu_2,\mu_2}^{(12)}(\wh \k,n-\nu_3)$ are the connection coefficients in \eqref{eq:C(12)}. Furthermore,
\begin{align} \label{eq:C(12)(34)} 
     c_{\nu,\mu}^{(34)}(\k) = (-1)^{\nu_3} \delta_{\nu,\mu}, \qquad c_{\nu,\mu}^{(12)(34)}(\k) =  (-1)^{\nu_3} c_{\nu,\mu}^{(12)}(\k).
\end{align}
\end{prop}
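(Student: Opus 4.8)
The plan is to exploit the fact that the transposition $(12)$ acts only on the first two variables $x_1,x_2$ and the first two parameters $\k_1,\k_2$, leaving the third tensor factor of the product basis \eqref{eq:OPd=3} untouched. Concretely, I would first record the factorization
\begin{equation*}
  P_\nu^\k(x_1,x_2,x_3) = P_{\nu_1,\nu_2}^{\wh\k}(x_1,x_2)\,(1-x_1-x_2)^{\nu_3}\, P_{\nu_3}^{(\k_4,\k_3)}\!\left(\tfrac{2x_3}{1-x_1-x_2}-1\right),
\end{equation*}
where $\wh\k=(\k_1,\k_2,\k_3+\k_4+2\nu_3+1)$ and $P_{\nu_1,\nu_2}^{\wh\k}$ is the two-variable Jacobi polynomial \eqref{eq:Pjn} of total degree $\nu_1+\nu_2$. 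The identical factorization holds for $P_\nu^{(12)\k}$ with $P_{\nu_1,\nu_2}^{\wh\k}$ replaced by $P_{\nu_1,\nu_2}^{(12)\wh\k}$ and with the \emph{same} third factor, since $(12)$ moves neither $x_3$ nor $\k_3,\k_4$ and the exponents $a_1,a_2$ are symmetric in $(\k_3,\k_4)$.

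Next I would apply the two-variable expansion \eqref{eq:con-coe-2d} at degree $n'=n-\nu_3$ to write $P_{\nu_1,\nu_2}^{(12)\wh\k}=\sum_{m} c_{\nu_2,m}^{(12)}(\wh\k,n-\nu_3)\,P_{n'-m,m}^{\wh\k}$, multiply through by the common third factor, and observe that each $P_{n'-m,m}^{\wh\k}(x_1,x_2)$ times that factor reassembles precisely into $P_\mu^\k$ with $\mu=(n-\nu_3-m,\,m,\,\nu_3)$. Here it is essential that $\mu_3=\nu_3$, which keeps $\wh\k$ unchanged across the entire sum. Comparing with the defining relation \eqref{defn:cc} and reading off coefficients then yields the diagonal structure $c_{\nu,\mu}^{(12)}(\k)=\delta_{\nu_3,\mu_3}\,c_{\nu_2,\mu_2}^{(12)}(\wh\k,n-\nu_3)$, and the explicit ${}_4F_3$ form follows by substituting \eqref{eq:C(12)} with the shifted data $\wh\k_3=\k_3+\k_4+2\nu_3+1$ and $|\wh\k|=|\k|+2\nu_3+1$, which produce exactly the top and bottom entries stated.

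The two identities in \eqref{eq:C(12)(34)} are then immediate from the sign relations \eqref{eq:swap}: inserting $P_\nu^{(34)\k}=(-1)^{\nu_3}P_\nu^\k$ into \eqref{defn:cc} gives $c_{\nu,\mu}^{(34)}(\k)=(-1)^{\nu_3}\delta_{\nu,\mu}$, while $P_\nu^{(12)(34)\k}=(-1)^{\nu_3}P_\nu^{(12)\k}$ gives $c_{\nu,\mu}^{(12)(34)}(\k)=(-1)^{\nu_3}c_{\nu,\mu}^{(12)}(\k)$.

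The one step requiring genuine care --- the main obstacle --- is verifying the first factorization: one must confirm that the leading two factors of \eqref{eq:OPd=3} genuinely reproduce a two-variable Jacobi polynomial, i.e.\ that $\wh\k_3=a_2=\k_3+\k_4+2\nu_3+1$ and $\wh\k_1+\wh\k_3+2\nu_2+1=a_1$, and that this shifted third parameter depends only on the \emph{fixed} index $\nu_3$, so that it is common to both $P_\nu^\k$ and $P_\nu^{(12)\k}$. Once this inherited two-variable structure is pinned down, everything that follows is parameter bookkeeping together with an appeal to the already-established two-variable case.
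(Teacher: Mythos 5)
Your proposal is correct and follows essentially the same route as the paper's proof: factor $P_\nu^\k$ as a two-variable Jacobi polynomial with parameters $\wh\k=(\k_1,\k_2,\k_3+\k_4+2\nu_3+1)$ times a third factor common to $P_\nu^\k$ and $P_\nu^{(12)\k}$, deduce the $\delta_{\nu_3,\mu_3}$ and the reduction to $c_{\nu_2,\mu_2}^{(12)}(\wh\k,n-\nu_3)$, and obtain \eqref{eq:C(12)(34)} from the sign relations \eqref{eq:swap}. One small index slip in your final paragraph: since in \eqref{eq:Pjn} the first Jacobi parameter is $\k_2+\k_3+2j+1$, the identity to verify is $\wh\k_2+\wh\k_3+2\nu_2+1=a_1$ (not $\wh\k_1+\wh\k_3+2\nu_2+1=a_1$), which indeed holds because $\wh\k_2=\k_2$.
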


\begin{proof}
With $\wh \k_3 = a_2(\k,\nu) = \k_3+\k_4 + 2\nu_3+1$ and $\wh \k = (\k_1,\k_2,\wh \k_3)$, we can write 
$$
  P_\nu^\k(x) = P_{\nu_1,\nu_2}^{\wh \k}(x_1,x_2)(1-x_1-x_2)^{\nu_3}  P_{\nu_3}^{(\k_4,\k_3)}\left (\frac{2 x_3}{1-x_1-x_2}-1\right),
$$
where $P_{\nu_1, \nu_2}^{\wh \k} = P_{n-\nu_3- \nu_2, \nu_2}^{\wh \k}$ is the orthogonal polynomial in two variables given 
by \eqref{eq:Pjn}. Furthermore, the transposition $(12)$ gives 
\begin{align*}
   P_{\nu}^{(12);\k}(x) = P_{\nu_1,\nu_2}^{(12);\wh \k}(x_1,x_2)(1-x_1-x_2)^{\nu_3}
         P_{\nu_3}^{(\k_4,\k_3)}\left (\frac{2 x_3}{1-x_1-x_2}-1\right),
\end{align*}
which has a common factor, the one indexed by $\nu_3$, with $P_\nu^\k$ and, as a consequence, it is easy to see that 
$c_{\nu,\mu}^{(12)}(\k) = 0$ if $\nu_3 \ne \mu_3$. If $\nu_3 = \mu_3$, then the connection coefficients clearly reduce to 
those of $c_{\nu_2,\mu_2}^{(12)}(\wh \k, n- \nu_3)$ for orthogonal polynomials in two variables, which proves 
\eqref{eq:C(12)d=3}. 

The other two cases follow readily from the first two identities in \eqref{eq:swap}. The second one also follows from
Proposition \ref{prop:convolu}.
\end{proof}
    
The polynomial $P_\nu^\k$ has another structure of two variables, for which we start with a simple observation. 

\begin{lem} \label{lem:2d-in-3d}
For $\tau$ in the subgroup $\{(1), (23), (24), (34), (234), (243)\}$, the connection coefficients satisfy
\begin{equation}\label{eq:2d-in-3d}
   c_{\nu,\mu}^{\tau} (\k) = \delta_{\nu_1, \mu_1} c_{\nu_3,\mu_3}^{\tau} (\wh \k, n- \nu_1), \qquad  \wh \k:= (\k_2,\k_3,\k_4) 
\end{equation}
where $c_{\nu_3,\mu_3}^\tau (\wh \k, n-\nu_1)$ are the connection coefficients for orthogonal polynomials on the triangle for
the variables $(y_2,y_3, 1-y_2-y_3)$.
\end{lem}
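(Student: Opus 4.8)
The plan is to exploit the hierarchical structure of the basis \eqref{eq:OPd=3} in the first variable $x_1$, which is fixed by every $\tau$ in the stabilizer subgroup $\{(1),(23),(24),(34),(234),(243)\}$ of the symbol $1$. Introducing the scaled variables $y_2 = x_2/(1-x_1)$ and $y_3 = x_3/(1-x_1)$, a direct computation from \eqref{eq:OPd=3} — using $1-x_1-x_2 = (1-x_1)(1-y_2)$ and $\frac{2x_3}{1-x_1-x_2} = \frac{2y_3}{1-y_2}$ — gives the factorization
\[
   P_\nu^\k(x) = P_{\nu_1}^{(a_1,\k_1)}(2x_1-1)\,(1-x_1)^{\nu_2+\nu_3}\, P_{\nu_2,\nu_3}^{\wh\k}(y_2, y_3), \qquad \wh\k = (\k_2,\k_3,\k_4),
\]
where $P_{\nu_2,\nu_3}^{\wh\k}$ is exactly the two-variable Jacobi polynomial \eqref{eq:Pjn} (first index $\nu_2$, second index $\nu_3$, total degree $\nu_2+\nu_3$), since $a_2 = \k_3+\k_4+2\nu_3+1$ matches its Jacobi parameter.

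Next I observe that this factorization is compatible with the action of any $\tau$ in the subgroup. Because $\tau$ fixes $1$, we have $(\tau X)_1 = x_1$, so the factor $1-x_1$ is $\tau$-invariant, and $a_1(\tau\k,\nu) = (\tau\k)_2+(\tau\k)_3+(\tau\k)_4+2\nu_2+2\nu_3+2 = a_1(\k,\nu)$ since $\tau$ merely permutes $\k_2,\k_3,\k_4$ and $\nu$ is common to both sides of the connection relation; hence the first factor $P_{\nu_1}^{(a_1,\k_1)}(2x_1-1)$ is unchanged. Moreover, $\tau$ acting on $(x_2,x_3,x_4)$ descends, after dividing by $1-x_1$, to the same permutation of the barycentric triple $(y_2,y_3,1-y_2-y_3)$, i.e. to the action of the corresponding element of $S_3$ on the scaled simplex. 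Therefore
\[
   P_\nu^{\tau\k}(\tau x) = P_{\nu_1}^{(a_1,\k_1)}(2x_1-1)\,(1-x_1)^{\nu_2+\nu_3}\, P_{\nu_2,\nu_3}^{\tau\wh\k}(\tau y),
\]
where $\tau y$ and $\tau\wh\k$ denote the two-variable action on $(y_2,y_3)$ and on $(\k_2,\k_3,\k_4)$.

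Now I apply the two-variable connection coefficient definition \eqref{eq:con-coe-2d} to the last factor at total degree $n' = \nu_2+\nu_3 = n-\nu_1$ and second index $\nu_3$, obtaining
\[
   P_{\nu_2,\nu_3}^{\tau\wh\k}(\tau y) = \sum_{m=0}^{n-\nu_1} c_{\nu_3,m}^\tau(\wh\k, n-\nu_1)\, P_{n-\nu_1-m,\, m}^{\wh\k}(y).
\]
Multiplying through by the $\tau$-invariant prefactor and invoking the factorization once more for each target index $\mu = (\nu_1,\, n-\nu_1-m,\, m)$ — the prefactor has the correct $a_1$ because $\mu_2+\mu_3 = n-\nu_1$ forces $a_1(\k,\mu) = a_1(\k,\nu)$ whenever $\mu_1 = \nu_1$ — I recognize each summand as $P_\mu^\k(x)$. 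Since $\{P_\mu^\k : |\mu|=n\}$ is a basis, comparison with \eqref{defn:cc} yields the claimed identity, the factor $\delta_{\nu_1,\mu_1}$ recording that only $\mu$ with $\mu_1=\nu_1$ appear and $m=\mu_3$.

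The only point requiring genuine care is the compatibility established in the second paragraph: both the invariance of the Jacobi parameter $a_1$ (which hinges on $\tau$ permuting only $\k_2,\k_3,\k_4$ and on the index $\nu$ being shared by the two sides of the connection relation) and the descent of the $S_4$-action on $(x_2,x_3,x_4)$ to the $S_3$-action on the scaled barycentric coordinates $(y_2,y_3,1-y_2-y_3)$. Once these structural facts are in place, the statement is simply a transfer of the two-variable connection coefficients of Section~\ref{se4} through the factorization.
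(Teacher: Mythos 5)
Your proposal is correct and follows essentially the same route as the paper's proof: the same change of variables $y_2 = x_2/(1-x_1)$, $y_3 = x_3/(1-x_1)$, the same factorization $P_\nu^\k(x) = P_{\nu_1}^{(a_1,\k_1)}(2x_1-1)(1-x_1)^{n-\nu_1}P_{\nu_2,\nu_3}^{\wh\k}(y_2,y_3)$, and the same key observation that $a_1 = |\wh\k|+2\nu_2+2\nu_3+2$ is invariant under the stabilizer of the symbol $1$, so that $\tau$ acts only on the two-variable factor. Your write-up merely makes explicit two points the paper leaves implicit (the descent of the $S_4$-action to the $S_3$-action on the scaled barycentric coordinates, and the matching of $a_1(\k,\mu)$ with $a_1(\k,\nu)$ when $\mu_1=\nu_1$), which is a harmless elaboration rather than a different argument.
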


\begin{proof}
Setting $y_2 = \frac{x_2}{1-x_1}$ and $y_3 = \frac{x_3}{1-x_1}$, so that $1-y_2-y_3 = \frac{1-x_1-x_2-x_3}{1-x_1}$,  
it is easy to see that we can rewrite $P_\nu^\k$ as
\begin{equation} \label{Pnu-in-y}
P_\nu^\k (x) = P_{\nu_1}^{(a_1,\k_1)}(2x_1-1)(1-x_1)^{n- \nu_1} P_{\nu_2, \nu_3}^{\wh \k} (y_2,y_3), \quad  
\end{equation}
where $P_{\nu_2, \nu_3}^{\wh \k} = P_{n-\nu_1- \nu_3, \nu_3}^{\wh \k}$ is the orthogonal polynomial in two variables given 
by \eqref{eq:Pjn}. Together with the fact that $a_1(\nu,\k) = |\wh \k|+ 2\nu_2 +2\nu_3+2$ is invariant under permutations in the 
subgroup, it follows that $\tau$ in the subgroup only acts on $P_{\nu_2, \nu_3}^{\wh \k}$. As a result, we conclude that  
$c_{\nu,\mu}^\tau (\k) = 0$ if $\nu_1 \ne \mu_1$. If $\nu_1 = \mu_1$, then \eqref{Pnu-in-y} shows that the connection 
coefficients can be derived from those that express $P_{\nu_2,\nu_3}^{\tau;\k}(y_2,y_3)$ in terms of $P_{\mu_2,\mu_3}^\k(y_2,y_3)$ in two variables. 
\end{proof}

\begin{rem}
In the right hand side of \eqref{eq:2d-in-3d}, $\tau$ is acting on $(\k_2,\k_3,\k_3)$ as a subset of $(\k_1,\k_2,\k_3,\k_4)$,
not on the positions of the elements in $(\k_2,\k_3,\k_4)$. For example, $(23)(\k_2,\k_3,\k_4) = (\k_3,\k_2,\k_4)$.
\end{rem}

As a consequence, we can derive from \eqref{eq:C(12)} and \eqref{eq:not(12)} the following corollary: 
\begin{cor}
For $|\nu| = |\mu| =n$, 
\begin{align} \label{eq:C(23)d=3}
  c_{\nu,\mu}^{(23)}(\k) = &\, \delta_{\nu_1,\mu_1} D_{\nu_3,\mu_3}^{n-\nu_1}(\k_2,\k_3,\k_4) \\
        & \times {}_4F_3 \left (\begin{matrix} -\mu_3, \mu_3+\k_3+\k_4+1, -\nu_3, \nu_3 +\k_2+\k_4+1 \\
              -n + \nu_1, \k_4+1, n-\nu_1+|\k|-\k_1+2 \end{matrix}; 1 \right)  \notag 
\end{align}
and 
\begin{align} \label{eq:C(24)d=3}
 c_{\nu,\mu}^{(24)}(\k) = (-1)^{\nu_3+\mu_3} c_{\nu,\mu}^{(23)}((34)\k). 
\end{align}
\end{cor}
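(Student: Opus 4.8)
The plan is to combine Lemma~\ref{lem:2d-in-3d} with the explicit two-variable formula \eqref{eq:C(12)} and the symmetry relations \eqref{eq:not(12)}, the only delicate point being the correct translation of permutation labels as flagged in the Remark following Lemma~\ref{lem:2d-in-3d}.

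For $\tau=(23)$: since $(23)$ lies in the subgroup $\{(1),(23),(24),(34),(234),(243)\}$, Lemma~\ref{lem:2d-in-3d} gives $c_{\nu,\mu}^{(23)}(\k)=\delta_{\nu_1,\mu_1}\,c_{\nu_3,\mu_3}^{(23)}(\wh\k,n-\nu_1)$ with $\wh\k=(\k_2,\k_3,\k_4)$, where $(23)$ swaps the \emph{elements} $\k_2$ and $\k_3$ inside the triple $\wh\k$. In the two-variable parametrization of \eqref{eq:Pjn}, with parameter triple $(\wh\k_1,\wh\k_2,\wh\k_3)$, swapping the first two entries is exactly the two-variable transposition $(12)$. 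I would therefore apply \eqref{eq:C(12)} with $\k\to\wh\k$, $n\to n-\nu_1$, $j\to\nu_3$, $m\to\mu_3$. The only simplification needed is in the bottom-right ${}_4F_3$ entry: using $|\wh\k|=\k_2+\k_3+\k_4=|\k|-\k_1$, the parameter $(n-\nu_1)+|\wh\k|+2$ becomes $n-\nu_1+|\k|-\k_1+2$, which yields precisely \eqref{eq:C(23)d=3}, with prefactor $D_{\nu_3,\mu_3}^{n-\nu_1}(\k_2,\k_3,\k_4)$.

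For $\tau=(24)$: Lemma~\ref{lem:2d-in-3d} again gives $c_{\nu,\mu}^{(24)}(\k)=\delta_{\nu_1,\mu_1}\,c_{\nu_3,\mu_3}^{(24)}(\wh\k,n-\nu_1)$, where now $(24)$ swaps $\k_2$ and $\k_4$, i.e.\ the first and third entries of $\wh\k$, which is the two-variable transposition $(13)$. I would then invoke the relation $c_{j,m}^{(13)}(\k)=(-1)^{m+j}c_{j,m}^{(12)}((23)\k)$ from \eqref{eq:not(12)}. The key observation is that the two-variable $(23)$ swaps the last two entries of $\wh\k$, namely $\k_3$ and $\k_4$, and this coincides with the restriction to $(\k_2,\k_3,\k_4)$ of the full transposition $(34)$ applied to $\k$, so that $(23)\wh\k=\wh{(34)\k}$. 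Feeding this back through the $(23)$ identity just established gives $c_{\nu,\mu}^{(24)}(\k)=(-1)^{\nu_3+\mu_3}c_{\nu,\mu}^{(23)}((34)\k)$, which is \eqref{eq:C(24)d=3}; here one notes that $(34)$ fixes $\k_1$ and the total sum, so all parameters remain consistent.

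The only genuine obstacle is bookkeeping: one must keep straight the distinction, emphasized in the Remark after Lemma~\ref{lem:2d-in-3d}, between a transposition in $S_4$ acting on the positions $2,3,4$ and the induced transposition on the two-variable parameter triple $(\wh\k_1,\wh\k_2,\wh\k_3)$. Once the dictionary $(23)\leftrightarrow(12)$ and $(24)\leftrightarrow(13)$ in two variables is fixed, together with the identity $(23)\wh\k=\wh{(34)\k}$, every remaining step is a direct substitution into \eqref{eq:C(12)} and \eqref{eq:not(12)}, and no new hypergeometric identity is required.
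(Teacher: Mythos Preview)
Your proof is correct and follows exactly the route the paper intends: the corollary is stated immediately after Lemma~\ref{lem:2d-in-3d} with the sentence ``As a consequence, we can derive from \eqref{eq:C(12)} and \eqref{eq:not(12)} the following corollary,'' and your argument is precisely that derivation with the bookkeeping spelled out. Your careful handling of the dictionary $(23)\leftrightarrow(12)$, $(24)\leftrightarrow(13)$ between the $S_4$-action on $(\k_2,\k_3,\k_4)$ and the $S_3$-action on positions, together with the identity $(23)\wh\k=\wh{(34)\k}$, is exactly what is needed and matches the paper's implicit reasoning.
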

Furthermore, the identities in \eqref{eq:swap} show that 
$$  
c_{\nu,\mu}^{(234)}(\k) = (-1)^{\nu_3} c_{\nu,\mu}^{(23)}(\k) \quad \hbox{and}\quad  
     c_{\nu,\mu}^{(243)}(\k) = (-1)^{\nu_3} c_{\nu,\mu}^{(24)}(\k). 
$$

In the cases that we deal with so far, the connection coefficients are Racah polynomials just like in the case of 
two variables. This will change in our next case, for which it is more convenient to give the formula for the normalized
connection coefficients. 

\begin{prop} \label{prop:C(123)d=3}
For $|\nu| = n$ and $|\mu|=n$, 
\begin{align} \label{eq:C(123)d=3}
 \wh c_{\nu,\mu}^{(123)}(\k) =  (-1)^{n+\nu_3} \sqrt{w_\sR(x; \beta, n)}\, \wh \sR_{(\mu_3,\mu_2)}( x; \beta, n),
\end{align}
where $\beta = (\k_1, \k_1+\k_4+1,\k_1+\k_3+\k_4+2, |\k|+3)$ and $x=(\nu_3, \nu_2+\nu_3)$. Moreover, 
\begin{align} \label{eq:C(132)d=3}
 \wh c_{\nu,\mu}^{(132)}(\k) = \wh c_{\mu,\nu}^{(123)}((132)\k)= (-1)^{n+\mu_3}
   \sqrt{w_\sR(y; \beta^*, n)} \, \wh \sR_{(\nu_3,\nu_2)}(y; \beta^*, n),
\end{align}
where $\beta^* = (\k_3, \k_3+\k_4+1,\k_2+\k_3+\k_4+2, |\k|+3)$ and $y=(\mu_3,\mu_2+\mu_3)$.
\end{prop}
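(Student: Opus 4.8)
The plan is to reduce the three-variable cycle $(123)$ to the two mixed-variable reductions already in hand, via the convolution identity of Proposition~\ref{prop:convolu}. I would write $(123)=(12)(23)$ (first applying $(23)$, then $(12)$, sends $1\to2\to3\to1$) and use the normalized form of \eqref{eq:c-convolution} with $\tau_1=(12)$, $\tau_2=(23)$:
\[
   \wh c_{\nu,\mu}^{(123)}(\k)=\sum_{|\omega|=n}\wh c_{\nu,\omega}^{(23)}((12)\k)\,\wh c_{\omega,\mu}^{(12)}(\k).
\]
The first key observation is that this sum collapses to a single term. By Lemma~\ref{lem:2d-in-3d} the factor $\wh c_{\nu,\omega}^{(23)}((12)\k)$ vanishes unless $\omega_1=\nu_1$, while by Proposition~\ref{prop:C(12)d=3} the factor $\wh c_{\omega,\mu}^{(12)}(\k)$ vanishes unless $\omega_3=\mu_3$. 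Since $|\omega|=n$, these two constraints pin down $\omega=(\nu_1,\nu_2+\nu_3-\mu_3,\mu_3)$ uniquely, so the connection coefficient reduces to one product.

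Next I would substitute the explicit one-variable Racah expressions and identify the result with the Tratnik polynomial. Formula \eqref{eq:C(23)d=3} presents $\wh c_{\nu,\omega}^{(23)}((12)\k)$ as a ${}_4F_3$ in $(\mu_3,\nu_3)$ (after $\omega_3=\mu_3$), and \eqref{eq:C(12)d=3} presents $\wh c_{\omega,\mu}^{(12)}(\k)$ as a ${}_4F_3$ in $(\mu_2,\omega_2)$. The heart of the argument is to recognize the product of these two ${}_4F_3$'s as the two ${}_4F_3$ factors defining the two-variable Racah polynomial $\sR_{(\mu_3,\mu_2)}(x;\beta,n)$ of Section~\ref{se3}, with $x=(\nu_3,\nu_2+\nu_3)$ and $\beta=(\k_1,\k_1+\k_4+1,\k_1+\k_3+\k_4+2,|\k|+3)$. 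This is a direct parameter check: using $\nu_1=n-\nu_2-\nu_3$ one rewrites the bottom entries $-n+\nu_1$ and $n-\nu_1+|\k|-\k_2+2$ of the $(23)$-series as $-(\nu_2+\nu_3)=-x_2$ and $\beta_2+x_2$, matching the $j=1$ factor; and writing $\omega_2=\nu_2+\nu_3-\mu_3$, together with $\wh\k_3=\k_3+\k_4+2\mu_3+1$ from Proposition~\ref{prop:C(12)d=3}, turns the $(12)$-series into the $j=2$ factor. All numerator and denominator entries then align.

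It remains to assemble the scalar prefactors. Each one-variable factor carries a sign, a weight $\sqrt{w_R}$ and a Racah normalization, together with the two-variable Jacobi norms entering the normalized convolution; I would collect the rising-factorial products and the $D$-constants and verify, using the explicit norm \eqref{eq:norm} and weight \eqref{eq:d-Racah-weight}, that they combine into the single factor $(-1)^{\mu_1+\nu_3}\sqrt{w_\sR(x;\beta,n)}$, giving \eqref{eq:C(123)d=3}. The companion identity \eqref{eq:C(132)d=3} then follows at once from Proposition~\ref{prop:inverse}: since $(132)=(123)^{-1}$ we have $\wh c_{\nu,\mu}^{(132)}(\k)=\wh c_{\mu,\nu}^{(123)}((132)\k)$, and substituting $\k\mapsto(132)\k$ into \eqref{eq:C(123)d=3} while interchanging $\nu\leftrightarrow\mu$ sends $\beta$ to $\beta^*=(\k_3,\k_3+\k_4+1,\k_2+\k_3+\k_4+2,|\k|+3)$ and the variable to $y=(\mu_3,\mu_2+\mu_3)$. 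I expect the main obstacle to be precisely this last bookkeeping step: reconciling the product of the two one-variable Racah weights and norms against the single Tratnik weight $w_\sR$ and tracking the accumulated signs, which is where the genuine product structure of the multivariable Racah family is used.
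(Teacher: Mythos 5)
Your proposal is correct and follows essentially the same route as the paper's proof: the decomposition $(123)=(12)(23)$ with Proposition~\ref{prop:convolu}, the collapse of the sum to the single term $\omega=(\nu_1,n-\nu_1-\mu_3,\mu_3)$ via the two Kronecker deltas, the identification of the product of ${}_4F_3$'s with the Tratnik factors of $\sR_{(\mu_3,\mu_2)}(x;\beta,n)$, and the appeal to Proposition~\ref{prop:inverse} for $(132)$. The only cosmetic difference is in the final bookkeeping: rather than verifying the prefactor equals $(-1)^{\mu_1+\nu_3}\sqrt{w_\sR(x;\beta,n)}$ outright, the paper shows the relevant ratio $\Delta$ is independent of $\nu$ and then lets the second orthogonality relation in \eqref{eq:discreteOP} together with Racah orthogonality fix the constant, which shortens the computation you flag as the main obstacle.
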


\begin{proof}
Since $(123) = (12)(23)$, we use Proposition \ref{prop:convolu}, which shows that 
$$
 c_{\nu,\mu}^{(123)}(\k) = \sum_{|\omega| = n} c_{\nu,\omega}^{(23)} ((12)\k) c_{\omega,\mu}^{(12)}(\k).  
$$
Since, by \eqref{eq:C(23)d=3}, $c_{\nu,\omega}^{(23)} ((12)\k)$ contains $\delta_{\nu_1,\omega_1}$ and, 
by \eqref{eq:C(12)d=3}, $c_{\omega,\mu}^{(12)} (\k)$ contains $\delta_{\omega_3,\mu_3}$, it follows that 
$$
  c_{\nu,\mu}^{(123)}(\k) =  c_{\nu,\omega^*}^{(23)} ((12)\k) c_{\omega^*,\mu}^{(12)}(\k) \quad\hbox{with} \quad
    \omega^* = (\nu_1,n-\nu_1-\mu_3,\mu_3).
$$ 
Hence, it follows from  \eqref{eq:C(23)d=3} and \eqref{eq:C(12)d=3} that 
\begin{align*}
  &  c_{\nu,\mu}^{(123)}(\k) =  D_{\nu_3,\mu_3}^{n-\nu_1}(\k_1,\k_3,\k_4) D_{n-\nu_1-\mu_3,\mu_2}^{n-\mu_3}
       (\k_1,\k_2,\k_3+\k_4+2\mu_3+1) \\
  &  \times {}_4F_3 \left (\begin{matrix} -\mu_3, \mu_3+\k_3+\k_4+1, -\nu_3, \nu_3 +\k_1+\k_4+1 \\
              -n + \nu_1, \k_4+1, n-\nu_1+|\k|-\k_2 + 2 \end{matrix}; 1 \right)  \notag \\
  &  \times {}_4F_3 \left (\begin{matrix} -\mu_2, \mu_2+|\k|-\k_1+2\mu_3+2, -n+\nu_1+\mu_3, n-\nu_1+\mu_3+|\k|-\k_2+2 \\
              -n + \mu_3, \k_3+\k_4+2\mu_3+2, n+\mu_3+|\k| +3 \end{matrix}; 1 \right).  \notag        
\end{align*}

Comparing with the explicit formula, it is easy to see that the two ${}_4F_3$ functions are exactly those two that appear 
in $\sR_\mu(x;\b,n)$ with $x$ and $\beta$ as specified in the statement. The explicit formula of $\sR_\mu(x;\b,n)$
also contains a $(- x_2)_{\mu_3}(|\k|-\k_2+ x_2+2)_{\mu_3}$. As a result it is sufficient to show that 
$$
\Delta:= \frac{D_{\nu_3,\mu_3}^{n-\nu_1}(\k_1,\k_3,\k_4) D_{n-\nu_1-\mu_3,\mu_2}^{n-\mu_3}(\k_1,\k_2,\k_3+\k_4+2\mu_3+1)}
  {(-n+\nu_1)_{\mu_3}(|\k|-\k_2+n-\nu_1+2)_{\mu_3} \sqrt{A_\nu(\k(123)) w_{\sR}(x,\beta,n)}}
     = r_\mu(\b,n),
$$
where $x = (\nu_3,n-\nu_1)$. By \eqref{eq:discreteOP} and the orthogonality of the Racah polynomials, it is 
sufficient to show that $\Delta$ is independent of $x$. This involves tedious computation. For the record let us
write down the intermediate steps.  
\begin{align*}
 A_\nu& ((123)\k) \equiv  \frac{(\k_2+1)_{\nu_1}(\k_3+1)_{\nu_2}(\k_4+1)_{\nu_3}(\k_1+1)_{\nu_3}}
    {\nu_1!\nu_2!\nu_3! (|\k|-\k_2+2n-2\nu_1+2)(\k_1+\k_4+2\nu_3+1)} \\
     &\times \frac{(|\k|-\k_2+3)_{2n-\nu_1}(\k_1+\k_4+2)_{n-\nu_1+\nu_3}}
            {(|\k|+3)_{2n-\nu_1}(|\k|-\k_2+2)_{n-\nu_1+\nu_3}(\k_1+\k_4+1)_{\nu_3}},
\end{align*}
where $\nu_2 = n-\nu_1-\nu_3$ and $\equiv$ means that the equality holds up to a multiple constant that is independent
of $\nu$. Using this formula and the explicit formula of $w_\sR(x;\beta,n)$, we can verify that 
$$
  \sqrt{A_\nu((123)\k) w_{\sR}(x,\beta,n)} \equiv \frac{(\k_2+1)_{\nu_1}(\k_4+1)_{\nu_3}
      (\k_3+1)_{n-\nu_1-\nu_3}}{\nu_1!\nu_3! (n-\nu_1-\nu_3)!}.
$$
The formula of $D_{\nu_3,\mu_3}^{n-\nu_1}(\k_1,\k_3,\k_4) D_{n-\nu_1-\mu_3,\mu_2}^{n-\mu_3}
(\k_1,\k_2,\k_3+\k_4+2\mu_3+1)$ can be derived from \eqref{eq:Ajm}, which can be used to show, together with
the above formula, that $\Delta$ is independent of $\nu$. We omit the details. 

Finally, since $(132) = (123)^{-1}$, the formula for $\wh c_{\nu,\mu}^{(132)}(\k)$ follows from the one for  
$\wh c_{\nu,\mu}^{(123)}(\k)$ by equation \eqref{eq:inverse}. 
\end{proof}

Recall that  $[\wh c_{\nu,\mu}^{(123)}(\k)]_{\nu,\mu}$ is an orthogonal matrix. Formula \eqref{eq:C(123)d=3} naturally connects the second orthogonality in \eqref{eq:discreteOP} to the orthogonality of the Racah polynomials. The first orthogonality relation in \eqref{eq:discreteOP} is more subtle and can be related again to the Racah polynomials via the duality established in Proposition~\ref{prop:duality}. We formulate this precisely in arbitrary dimension in Theorem~\ref{thm:C(cyclic)}.

Explicit formulas for connection coefficients corresponding to other permutations can be derived from those of 
$\wh c_{\nu,\mu}^{(123)}(\k)$ and $\wh c_{\nu,\mu}^{(132)}(\k)$. 

\begin{prop}
For $|\nu| = n$ and $|\mu|=n$, 
\begin{equation} \label{eq:C(124)d=3}
\begin{alignedat}{2}
 & \wh c_{\nu,\mu}^{(124)}(\k) = (-1)^{\nu_3+\mu_3}  \wh c_{\nu,\mu}^{(123)}((34)\k), \quad
  && \wh c_{\nu,\mu}^{(142)}(\k) = (-1)^{\nu_3+\mu_3}  \wh c_{\nu,\mu}^{(132)}((34)\k), \\
 & \wh c_{\nu,\mu}^{(1234)}(\k) = (-1)^{\nu_3}  \wh c_{\nu,\mu}^{(123)}(\k),  
   && \wh c_{\nu,\mu}^{(1342)}(\k) = (-1)^{\nu_3} \wh c_{\mu,\nu}^{(132)}(\k), \\ 
 & \wh c_{\nu,\mu}^{(1243)}(\k) = (-1)^{\mu_3}  \wh c_{\nu,\mu}^{(123)}((34)\k), 
   &&  \wh c_{\nu,\mu}^{(1432)}(\k) = (-1)^{\mu_3}  \wh c_{\nu,\mu}^{(132)}((34)\k).
\end{alignedat}
\end{equation}
\end{prop}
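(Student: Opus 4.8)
The plan is to reduce all six identities to the two formulas for $\wh c_{\nu,\mu}^{(123)}(\k)$ and $\wh c_{\nu,\mu}^{(132)}(\k)$ established in Proposition~\ref{prop:C(123)d=3}, using nothing beyond the sign relations \eqref{eq:swap}, the explicit value of the $(34)$-coefficient in \eqref{eq:C(12)(34)}, and the convolution rule \eqref{eq:c-convolution}. Everything rests on two elementary rules for how multiplication by the transposition $(34)$ propagates through the normalized coefficients. For right multiplication, each line of \eqref{eq:swap} reads $P_\nu^{\tau(34)\k}(x)=(-1)^{\nu_3}P_\nu^{\tau\k}(x)$ as an identity in $\CV_n^3(W_\k)$; expanding both sides in $\{P_\mu^\k\}$ gives $c_{\nu,\mu}^{\tau(34)}(\k)=(-1)^{\nu_3}c_{\nu,\mu}^{\tau}(\k)$, and since the two polynomials differ only by a sign their norms coincide, so $A_\nu(\tau(34)\k)=A_\nu(\tau\k)$ and the normalization in \eqref{eq:conn-c-coeff} is unaffected, giving
\[
  \wh c_{\nu,\mu}^{\,\tau(34)}(\k)=(-1)^{\nu_3}\,\wh c_{\nu,\mu}^{\,\tau}(\k).
\]
For left multiplication I would put $\tau_1=(34)$, $\tau_2=\tau$ in \eqref{eq:c-convolution} and use $\wh c_{\omega,\mu}^{(34)}(\k)=(-1)^{\mu_3}\delta_{\omega,\mu}$ from \eqref{eq:C(12)(34)} (the normalization factor being $1$ since once more $A_\mu((34)\k)=A_\mu(\k)$), which collapses the sum to
\[
  \wh c_{\nu,\mu}^{\,(34)\tau}(\k)=(-1)^{\mu_3}\,\wh c_{\nu,\mu}^{\,\tau}((34)\k).
\]
Thus a right $(34)$ contributes $(-1)^{\nu_3}$ and fixes $\k$, while a left $(34)$ contributes $(-1)^{\mu_3}$ and conjugates $\k$ to $(34)\k$.

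The rest is bookkeeping in $S_4$. I would first handle the two representatives $(124)$ and $(142)$, which are the conjugates $(124)=(34)(123)(34)$ and $(142)=(34)(132)(34)$. Writing $(124)=(34)\cdot(1234)$ and applying the left rule and then the right rule at the shifted parameter $(34)\k$ gives $\wh c_{\nu,\mu}^{(124)}(\k)=(-1)^{\mu_3}\wh c_{\nu,\mu}^{(1234)}((34)\k)=(-1)^{\nu_3+\mu_3}\wh c_{\nu,\mu}^{(123)}((34)\k)$, and the identical computation with $(142)=(34)\cdot(1342)$ yields the $(142)$ formula with $\wh c^{(132)}$ replacing $\wh c^{(123)}$. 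The four four-cycles then drop out of the right rule alone, since $(1234)=(123)(34)$, $(1243)=(124)(34)$, $(1342)=(132)(34)$ and $(1432)=(142)(34)$: directly $\wh c_{\nu,\mu}^{(1234)}(\k)=(-1)^{\nu_3}\wh c_{\nu,\mu}^{(123)}(\k)$, and substituting the $(124)$ and $(142)$ formulas and simplifying with $(-1)^{2\nu_3}=1$ gives the stated expressions for $(1243)$ and $(1432)$ with the single factor $(-1)^{\mu_3}$.

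The only step requiring genuine care is the $(1342)$ identity. The right rule delivers it directly as $(-1)^{\nu_3}\wh c_{\nu,\mu}^{(132)}(\k)$, in exact parallel with the $(1234)$ case, so the substance of this case is to pin down the placement of the indices $\nu,\mu$ in the displayed coefficient; here one must track carefully which argument the transposition acts on, and invoke the first equality of \eqref{eq:C(132)d=3} together with the inverse relation \eqref{eq:inverse}, which trade a transposed $(132)$-coefficient for a $(123)$-coefficient at the conjugated parameter $(132)\k$. This is precisely the point at which the transpose-and-dualize phenomenon of Proposition~\ref{prop:duality} is implicitly in play, and I expect it to be the main obstacle. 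Throughout, the routine but essential check is the invariance of the normalization constants, $A_\nu(\sigma(34)\k)=A_\nu(\sigma\k)$ and $A_\mu((34)\k)=A_\mu(\k)$, which is what lets the clean signs survive the passage from the $c$'s to the $\wh c$'s.
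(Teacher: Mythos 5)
Your two ``rules'' are the paper's proof in disguise: the right rule is precisely the content of the relations \eqref{eq:swap}, and the left rule (convolution \eqref{eq:c-convolution} collapsed against $c_{\omega,\mu}^{(34)}(\k)=(-1)^{\mu_3}\delta_{\omega,\mu}$ from \eqref{eq:C(12)(34)}) is exactly how the paper handles the conjugations $(124)=(34)(123)(34)$ and $(142)=(34)(132)(34)$; your factorizations $(124)=(34)(1234)$, $(1234)=(123)(34)$, etc.\ recombine into the same computations, and your normalization checks $A_\nu(\tau(34)\k)=A_\nu(\tau\k)$ are sound. So for five of the six identities --- $(124)$, $(142)$, $(1234)$, $(1243)$, $(1432)$ --- your argument is complete, correct, and essentially identical to the paper's.

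The $(1342)$ case is the one place where you should trust your own computation instead of hedging. The right rule gives $\wh c_{\nu,\mu}^{(1342)}(\k)=(-1)^{\nu_3}\wh c_{\nu,\mu}^{(132)}(\k)$, indices in the order $\nu,\mu$; this is also exactly what the paper's proof produces, since for this case it simply cites \eqref{eq:swap}. The transposed coefficient $\wh c_{\mu,\nu}^{(132)}(\k)$ in the displayed proposition is a misprint, and the repair you sketch --- recovering the transposition from \eqref{eq:C(132)d=3}, \eqref{eq:inverse} and the duality of Proposition~\ref{prop:duality} --- cannot succeed, because no identity $\wh c_{\nu,\mu}^{(132)}(\k)=\wh c_{\mu,\nu}^{(132)}(\k)$ holds: the connection matrix is not symmetric. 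Concretely, at degree $n=1$ with $\k=0$ and $e_i$ the standard basis vectors of $\NN_0^3$, the expansion of $P_{e_1}^{(123)\k}$ in the basis \eqref{eq:OPd=3} contains no $x_3$, so $c_{e_1,e_3}^{(123)}(\k)=0$ identically, while a direct computation gives $c_{e_3,e_1}^{(123)}(0)=\tfrac13$; since by \eqref{eq:inverse} the $(132)$ matrix is the transpose of the $(123)$ matrix at the parameter $(132)\k$, the same asymmetry holds for $\wh c^{(132)}$, and the displayed $(1342)$ formula fails at the entry $(\nu,\mu)=(e_3,e_1)$. Proposition~\ref{prop:duality} relates Racah polynomials with \emph{different} parameter vectors $\beta$ and $\bet$; it does not yield a symmetry of $\bigl(\wh c_{\nu,\mu}^{(132)}(\k)\bigr)$ at fixed $\k$. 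Delete the hedge, assert $\wh c_{\nu,\mu}^{(1342)}(\k)=(-1)^{\nu_3}\wh c_{\nu,\mu}^{(132)}(\k)$, and flag the index transposition in the statement as a typo; your proof is then complete.
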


\begin{proof}
Since $(34)(123)(34) = (124)$ and  $(34)(132)(34) = (142)$, the two identities in the first line follow from 
\eqref{eq:c-convolution} and $c_{\nu,\mu}^{(34)} (\k) = (-1)^{\mu_3} \delta_{\nu,\mu}$. The other identities 
follow from the relations in \eqref{eq:swap}. 
\end{proof}

All these connection coefficients are given in terms of the Racah polynomials of two variables. 
The remaining permutations are $(13), (14)$, $(13)(24)$, $(14)(23)$, $(134)$, $(143)$, $(1324)$, $(1423)$. 
By \eqref{eq:swap}, we only need to deal with the first four. Let us consider, for example, $(13)$. 

\begin{prop}
For $|\nu| = n$ and $|\mu|=n$, 
\begin{align} \label{eq:C(13)d=3}
  \wh c_{\nu,\mu}^{(13)}(\k)   =   (-1)^{\nu_2} 
     \sum_{\ell = 0}^{n-\nu_3}   b_{\nu,\mu}(\ell) \wh R_{\nu_2}(\l(\ell);\sigma)
        \wh \sR_{(\mu_3,\mu_2)}( (\nu_3, \ell + \nu_3); \beta,n),  
\end{align}
where  $b_{\nu,\mu}(\ell) = (-1)^{\ell}\sqrt{w_R(\ell;\sigma)} \sqrt{w_\sR((\nu_3, \ell+\nu_3),\beta,n)}$ with
\begin{align*}
  \sigma & = (-n+\nu_3-1,n+\nu_3+|\k|-\k_3+2, \k_1+\k_4+2\nu_3+1,\k_3), \\
   \beta & = (\k_1,\k_1+\k_4+1,\k_1+\k_3+\k_4+2, |\k|+3). 
\end{align*}
\end{prop}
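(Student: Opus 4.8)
The plan is to reduce the permutation $(13)$ to permutations whose connection coefficients were already computed, and then invoke the normalized version of the convolution identity in Proposition~\ref{prop:convolu}. The key observation is the factorization $(13)=(123)(12)$ in $S_4$ (apply $(12)$ first and then $(123)$, in agreement with the composition convention used in the proof of Proposition~\ref{prop:convolu}). Taking $\tau_1=(123)$ and $\tau_2=(12)$ in the normalized form of \eqref{eq:c-convolution}, I would write
\[
  \wh c_{\nu,\mu}^{(13)}(\k)=\sum_{|\omega|=n}\wh c_{\nu,\omega}^{(12)}((123)\k)\,\wh c_{\omega,\mu}^{(123)}(\k).
\]

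The first step is to collapse this sum. By (the normalized form of) Proposition~\ref{prop:C(12)d=3}, the factor $\wh c_{\nu,\omega}^{(12)}((123)\k)$ carries the delta $\delta_{\nu_3,\omega_3}$, so only the terms with $\omega_3=\nu_3$ survive. Writing $\omega=(n-\nu_3-\ell,\ell,\nu_3)$ with $\ell=\omega_2$, the remaining sum runs over $\ell=0,\dots,n-\nu_3$, matching the summation range in \eqref{eq:C(13)d=3}. For the second factor I would substitute Proposition~\ref{prop:C(123)d=3}, which gives
\[
  \wh c_{\omega,\mu}^{(123)}(\k)=(-1)^{\mu_1+\nu_3}\sqrt{w_\sR(x;\beta,n)}\,\wh\sR_{(\mu_3,\mu_2)}(x;\beta,n),
\]
with $x=(\omega_3,\omega_2+\omega_3)=(\nu_3,\ell+\nu_3)$ and $\beta=(\k_1,\k_1+\k_4+1,\k_1+\k_3+\k_4+2,|\k|+3)$, exactly the two–variable Racah factor in the statement.

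For the first factor I would compute the group action $(123)\k=(\k_2,\k_3,\k_1,\k_4)$ (using the convention $(\tau\k)_i=\k_{\tau(i)}$, consistent with the two–variable description $(123)(\k_1,\k_2,\k_3)=(\k_2,\k_3,\k_1)$ in the introduction), and then apply Proposition~\ref{prop:C(12)d=3} to reduce $\wh c_{\nu,\omega}^{(12)}((123)\k)$ to the two–variable normalized coefficient $\wh c_{\nu_2,\ell}^{(12)}(\wh\k,n-\nu_3)$ with $\wh\k=(\k_2,\k_3,\k_1+\k_4+2\nu_3+1)$. Using the first form of Corollary~\ref{cor:C(12)d=2} (the form producing a Racah polynomial of degree $\nu_2$ in the variable $\ell$), this factor equals
\[
  (-1)^{(n-\nu_3)+\ell+\nu_2}\sqrt{w_R(\ell;\sigma)}\,\wh R_{\nu_2}(\l(\ell);\sigma);
\]
substituting $\wh\k$ into $\sigma_1$ and using $\k_1+\k_2+\k_4=|\k|-\k_3$ shows that the parameters coincide with the $\sigma$ of the statement.

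Finally I would multiply the two factors and collect constants. The product of signs is $(-1)^{(n-\nu_3)+\ell+\nu_2+\mu_1+\nu_3}=(-1)^{n+\mu_1+\nu_2}(-1)^{\ell}$, producing the overall prefactor $(-1)^{n+\mu_1+\nu_2}$ together with the $(-1)^{\ell}$ inside $b_{\nu,\mu}(\ell)$, while the two square–root weights combine into $\sqrt{w_R(\ell;\sigma)}\sqrt{w_\sR((\nu_3,\ell+\nu_3),\beta,n)}$, giving $b_{\nu,\mu}(\ell)$ and hence \eqref{eq:C(13)d=3}. The only genuine obstacle here is bookkeeping: one must fix the group–action convention so that $(123)\k=(\k_2,\k_3,\k_1,\k_4)$, and one must select the correct branch of Corollary~\ref{cor:C(12)d=2} so that the surviving one–variable Racah polynomial has degree $\nu_2$ and argument $\l(\ell)$; one should also confirm that the normalized reduction in Proposition~\ref{prop:C(12)d=3} carries over (the shared factor indexed by $\nu_3=\omega_3$ cancels in the norm ratio, leaving precisely the two–variable normalization). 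Once these conventions are pinned down, the remaining Pochhammer and sign manipulations are routine.
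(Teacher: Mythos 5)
Your proposal is correct and follows essentially the same route as the paper: the factorization $(13)=(123)(12)$, the normalized convolution identity of Proposition~\ref{prop:convolu}, collapsing the sum via the $\delta_{\nu_3,\omega_3}$ in the $(12)$ factor, and substituting Corollary~\ref{cor:C(12)d=2} (with $\wh\k=(\k_2,\k_3,\k_1+\k_4+2\nu_3+1)$) and Proposition~\ref{prop:C(123)d=3}. Your detailed bookkeeping of parameters and signs is accurate; in fact your $\wh R_{\nu_2}(\l(\ell);\sigma)$ (degree $\nu_2$, variable $\ell=\omega_2$) matches the stated formula \eqref{eq:C(13)d=3}, whereas the paper's own proof displays $\wh R_{\mu_2}$, an apparent typographical slip.
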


\begin{proof}
Since $(13) = (123)(12)$, it follows from \eqref{eq:c-convolution} that 
\begin{align*}
  \wh c_{\nu,\mu}^{(13)}(\k)  & = \sum_{|\omega|=n}\wh c_{\nu,\omega}^{(12)}((123)\k) \wh c_{\omega,\mu}^{(123)}(\k) \\
   & = (-1)^{\nu_2}\sum_{\omega_1+\omega_2 = n-\nu_3}   b_{\nu,\mu}(\omega_2) \wh R_{\nu_2}(\l(\omega_2);\sigma)
        \wh \sR_{(\mu_3,\mu_2)}( (\nu_3,n-\omega_1); \beta,n),
\end{align*}
which can be written as a sum over $\ell = \omega_2$, since $n-\omega_1 = \omega_2+\nu_3$. 
\end{proof}

\begin{rem} \label{rem:5.7}
Using \eqref{eq:c-convolution} and explicit formulas for connection coefficients, we can also derive various relations for 
Racah polynomials of one and two variables in the spirit of Proposition \ref{prop:4.5}. For example, using $(12)(123) = (23)$,
we end up with a formula that expresses a Racah polynomial of one variable as a sum of a product of Racah polynomials of 
two variables and Racah polynomials of one variable. 
\end{rem}

\begin{rem}
Note that \eqref{eq:C(13)d=3} resembles the formula for the Wigner 
$9j$ symbols as a sum over triple products of $6j$ symbols. It would be 
interesting to explore this connection in more depth and to relate the results 
in the present paper to Lie theory along the lines of the works \cite{LVdJ,Ros}.
\end{rem}

\section{Jacobi polynomials of $d$ variables}\label{se6}
\setcounter{equation}{0} 

Most of the explicit formulas in the previous section can be extended to arbitrary dimension. For $j\in\NN$, let $\wh S^{j}_{d+1}$ denote the subgroup of $S_{d+1}$ fixing the points of $\{1,2,\dots,j\}$. Then similarly to Lemma~\ref{lem:2d-in-3d}, we see that the connection coefficients for $\tau\in \wh S^{j}_{d+1}$ can be computed from the connection coefficients of Jacobi polynomials in $d-j$ variables.

\begin{prop}\label{prop:fix-first}
For $\tau\in \wh S^{j}_{d+1}$ we have
\begin{equation}\label{eq:fix-first}
c_{\nu,\mu}^{\tau}(\k)=\delta_{\bnu_{j},\bmu_{j}}\,c_{\bnu^{j+1},\bmu^{j+1}}^{\tau}(\bka^{j+1}, n-|\bnu_j|), 
\end{equation}
where $c_{\bnu^{j+1},\bmu^{j+1}}^{\tau}(\bka^{j+1}, n-|\bnu_j|)$ are the connection coefficients for Jacobi polynomials on the simplex $T^{d-j}$ in the variables $y_{j+1},y_{j+2},\dots,y_{d}$ with parameters $\bka^{j+1}=(\kappa_{j+1},\kappa_{j+2},\dots,\kappa_{d+1})$. Moreover, formula \eqref{eq:fix-first} holds also for the normalized connection coefficients.
\end{prop}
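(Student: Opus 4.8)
The plan is to generalize the factorization underlying \lemref{lem:2d-in-3d} from the case $d=3$, $j=1$ to arbitrary $d$ and $j$. Write $s := 1-|\xb_j|$ and introduce the scaled tail variables $y_i := x_i/s$ for $i=j+1,\dots,d$, so that $1-(y_{j+1}+\cdots+y_d) = (1-|x|)/s$ and $Y:=(y_{j+1},\dots,y_d,(1-|x|)/s)$ runs over the simplex $T^{d-j}$. For a factor with index $i>j$ in \eqref{eq:Pnu} one has $1-|\xb_{i-1}| = s\,(1-(y_{j+1}+\cdots+y_{i-1}))$ and $\frac{2x_i}{1-|\xb_{i-1}|}-1 = \frac{2y_i}{1-(y_{j+1}+\cdots+y_{i-1})}-1$, so each of the last $d-j$ Jacobi factors becomes the corresponding factor of the Jacobi polynomial on $T^{d-j}$ in $y_{j+1},\dots,y_d$, up to the scalar $s^{\nu_i}$ pulled out of $(1-|\xb_{i-1}|)^{\nu_i}$. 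Collecting these gives $s^{|\bnu^{j+1}|}=s^{\,n-|\bnu_j|}$, and hence
\[
  P_\nu^\k(x) = \Phi_{\bnu_j}(x_1,\dots,x_j)\,s^{\,n-|\bnu_j|}\,P_{\bnu^{j+1}}^{\bka^{j+1}}(y_{j+1},\dots,y_d),
\]
where $\Phi_{\bnu_j}$ is the product of the first $j$ factors of \eqref{eq:Pnu}. The key point for the parameters is that, by \eqref{eq:aj} and $|\bnu^{i+1}| = n-|\bnu_i|$, the quantity $a_i$ for $i\le j$ depends only on $\bnu_i$ once $n$ is fixed, while for $i=j+k>j$ the parameter $a_i$ agrees with its $(d-j)$-variable counterpart for the vector $\bka^{j+1}$, since $|(\bka^{j+1})^{k+1}|=|\bka^{i+1}|$ and $|(\bnu^{j+1})^{k+1}|=|\bnu^{i+1}|$.

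Next I would verify the required invariance. Since $\tau\in\wh S^{j}_{d+1}$ fixes $\{1,\dots,j\}$, it fixes the first $j$ coordinates of $X$ and permutes $\kappa_{j+1},\dots,\kappa_{d+1}$ among themselves; hence $|(\tau\bka)^{i+1}|=|\bka^{i+1}|$ for $i\le j$, the head parameters $a_i$ $(i\le j)$ are unchanged, and the head $\Phi_{\bnu_j}$ together with the power $s^{\,n-|\bnu_j|}$ is identical in $P_\nu^{\tau\k}(\tau x)$ and in $P_\nu^\k(x)$. On the tail, dividing the permuted coordinates $(\tau X)_{j+1},\dots,(\tau X)_{d+1}$ by $s$ reproduces exactly the action of $\tau$ (viewed in $S_{d-j+1}$) on $Y\in T^{d-j}$; therefore
\[
  P_\nu^{\tau\k}(\tau x) = \Phi_{\bnu_j}(x_1,\dots,x_j)\,s^{\,n-|\bnu_j|}\,P_{\bnu^{j+1}}^{\tau(\bka^{j+1})}(\tau\{y_{j+1},\dots,y_d\}),
\]
with the tail being precisely the object expanded by the $(d-j)$-variable connection coefficients.

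I would then apply the definition \eqref{defn:cc} in dimension $d-j$ to the tail, writing $P_{\bnu^{j+1}}^{\tau(\bka^{j+1})}(\tau\{\cdot\}) = \sum_{|\rho|=n-|\bnu_j|} c^{\tau}_{\bnu^{j+1},\rho}(\bka^{j+1},n-|\bnu_j|)\,P_\rho^{\bka^{j+1}}(y)$, and reassemble each term by reversing the factorization. Because the head factors and the power of $s$ match those of $P_{(\bnu_j,\rho)}^\k$ whenever $|\bnu_j|+|\rho|=n$, each summand becomes $P_{(\bnu_j,\rho)}^\k(x)$; comparing with the $d$-variable expansion \eqref{defn:cc} and using that $\{P_\mu^\k:|\mu|=n\}$ is a basis, I read off that $c^\tau_{\nu,\mu}(\k)$ vanishes unless $\bmu_j=\bnu_j$ and otherwise equals $c^{\tau}_{\bnu^{j+1},\bmu^{j+1}}(\bka^{j+1},n-|\bnu_j|)$, which is \eqref{eq:fix-first}.

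For the normalized version I would check that the norm factorizes compatibly. The substitution $x_i=sy_i$ separates the measure as $W_\k(x)\,dx = \big(\prod_{i=1}^j x_i^{\kappa_i}\big)\,s^{|\bka^{j+1}|+d-j}\,dx_{\le j}\;W_{\bka^{j+1}}(y)\,dy$, so $A_\nu(\k)$ factors as a head factor depending only on $\bnu_j$ (and $\tau$-invariant, by the invariance already noted) times $A_{\bnu^{j+1}}(\bka^{j+1})$. When $\bmu_j=\bnu_j$ this makes $A_\mu(\k)/A_\nu(\tau\k)$ equal to $A_{\bmu^{j+1}}(\bka^{j+1})/A_{\bnu^{j+1}}((\tau\bka)^{j+1})$, and the normalized identity follows from \eqref{eq:conn-c-coeff}. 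The main obstacle is purely the bookkeeping of \eqref{eq:aj}: one must track that the head parameters $a_i$ $(i\le j)$ are simultaneously $\tau$-invariant and independent of the tail once $n$ is fixed, and that the tail parameters $a_i$ $(i>j)$ coincide with their $(d-j)$-variable counterparts. Once this indexing is confirmed, the argument is the same product separation as in \lemref{lem:2d-in-3d}.
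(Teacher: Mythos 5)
Your proposal is correct and follows essentially the same route as the paper: the paper's proof simply defers to the factorization argument of Lemma~\ref{lem:2d-in-3d}, and your write-up is exactly that generalization (splitting $P_\nu^\k$ into a head depending only on $\bnu_j$, the power $s^{\,n-|\bnu_j|}$, and a $(d-j)$-variable Jacobi polynomial in the scaled tail variables, with the parameter bookkeeping for \eqref{eq:aj} verified). Your treatment of the normalized case via factorization of the measure is a mild variant of the paper's suggestion to use the explicit norm formula \eqref{eq:norm1} together with \eqref{eq:conn-c-coeff}, but it amounts to the same check.
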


\begin{proof}
The proof of  \eqref{eq:fix-first} is similar to the proof of Lemma~\ref{lem:2d-in-3d}. The fact that  \eqref{eq:fix-first} holds also for the normalized connection coefficients follows by using the explicit formula for the norms of the Jacobi polynomials  \eqref{eq:norm1} and the connection between $c_{\nu,\mu}^{\tau}(\k)$ and $\wh c_{\nu,\mu}^{\tau}(\k)$ given in equation \eqref{eq:conn-c-coeff}.
\end{proof}

For $k<d+1$ we think of $S_{k}$ as the subgroup of $S_{d+1}$ which acts on the first $k$ elements while keeping $\{k+1,k+2,\dots,d+1\}$ 
fixed. Similarly to Proposition~\ref{prop:C(12)d=3} we can reduce the computation of $c_{\nu,\mu}^{\tau}(\k)$ for  
$\tau\in S_{k}\subset S_{d+1}$ to the computation of the connection coefficients in lower dimension as follows.

\begin{prop}\label{prop:fix-last}
For $k<d$ and $\tau\in S_{k}$ we have
\begin{equation}\label{eq:fix-last}
c_{\nu,\mu}^{\tau}(\k)=\delta_{\bnu^{k+1},\bmu^{k+1}}\,c_{\bnu_{k},\bmu_{k}}^{\tau}(\hat \kappa, n-|\bnu^{k+1}|),  
\end{equation}
where $c_{\bnu_{k},\bmu_{k}}^{\tau}(\hat \kappa, n-|\bnu^{k+1}|)$  are the connection coefficients for Jacobi polynomials on the simplex $T^{k}$ in the variables $x_{1},x_{2},\dots,x_{k}$ with parameters 
$$\hat \kappa=(\kappa_1,\kappa_2,\dots,\kappa_{k},|\bka^{k+1}|+2|\bnu^{k+1}|+d-k).$$  Moreover, formula \eqref{eq:fix-last} holds also for the normalized connection coefficients.
\end{prop}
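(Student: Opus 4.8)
The plan is to generalize the proof of Proposition~\ref{prop:C(12)d=3} (which is the case $d=3$, $k=2$, $\tau=(12)$) by splitting each basis polynomial into a ``head'' in the variables $\xb_k=(x_1,\dots,x_k)$ and a ``tail'' in $x_{k+1},\dots,x_d$. Writing $s:=|\xb_k|$ and $\hat\k=(\k_1,\dots,\k_k,|\bka^{k+1}|+2|\bnu^{k+1}|+d-k)$, I first split the product in \eqref{eq:Pnu} at $j=k$. A direct check from \eqref{eq:aj} shows that the exponents $a_j$ of the $k$-variable problem with parameters $\hat\k$ and index $\bnu_k$ agree with $a_j(\k,\nu)$ for $1\le j\le k$, so the first $k$ factors assemble into the $k$-variable Jacobi polynomial $P_{\bnu_k}^{\hat\k}(\xb_k)$; substituting $\xi_i=x_{k+i}/(1-s)$ in the factors $j>k$ and likewise matching their $a_j$ with those of the $(d-k)$-variable problem with parameters $\bka^{k+1}$ turns them into $(1-s)^{|\bnu^{k+1}|}P_{\bnu^{k+1}}^{\bka^{k+1}}(\xi)$, a scaled Jacobi polynomial on $T^{d-k}$ in $\xi=(\xi_1,\dots,\xi_{d-k})$. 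This yields
\[
  P_\nu^\k(x)=P_{\bnu_k}^{\hat\k}(\xb_k)\,(1-s)^{|\bnu^{k+1}|}\,P_{\bnu^{k+1}}^{\bka^{k+1}}(\xi).
\]

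Next I would observe that the tail is invariant under $\tau\in S_k$. Since $\tau$ fixes the symbols $k+1,\dots,d+1$, it fixes $\k_{k+1},\dots,\k_{d+1}$ and the variables $x_{k+1},\dots,x_d$, and it leaves $s$ (hence $\xi$) unchanged because $x_1+\cdots+x_k$ is permutation invariant; moreover $a_j(\tau\k,\nu)=a_j(\k,\nu)$ for $j>k$, so the last entry of $\hat\k$ is unaffected and $\widehat{\tau\k}=\tau\hat\k$. Consequently $P_\nu^{\tau\k}(\tau x)=P_{\bnu_k}^{\tau\hat\k}(\tau\xb_k)\,(1-s)^{|\bnu^{k+1}|}\,P_{\bnu^{k+1}}^{\bka^{k+1}}(\xi)$, with exactly the same tail, and its head is the $k$-variable object obtained by letting $\tau\in S_k\subset S_{k+1}$ act on $T^{k}$ with base parameters $\hat\k$.

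I would then evaluate $c_{\nu,\mu}^\tau(\k)=A_\mu(\k)^{-1}\la P_\nu^{\tau\k}(\tau\{\cdot\}),P_\mu^\k\ra_{W_\k}$ (which follows from \eqref{defn:cc} and orthogonality) by inserting the two factorizations and changing variables from $(x_{k+1},\dots,x_d)$ to $\xi$ with $\xb_k$ held fixed. The weight splits as $W_\k(x)=x_1^{\k_1}\cdots x_k^{\k_k}(1-s)^{|\bka^{k+1}|}W_{\bka^{k+1}}(\xi)$ and the Jacobian contributes $(1-s)^{d-k}$, so the integral factors into an inner integral over $\xi\in T^{d-k}$ and an outer one over $\xb_k\in T^{k}$. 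The inner integral is $\int_{T^{d-k}}P_{\bnu^{k+1}}^{\bka^{k+1}}P_{\bmu^{k+1}}^{\bka^{k+1}}W_{\bka^{k+1}}$, which by the mutual orthogonality of the basis $\{P_\omega^{\bka^{k+1}}\}$ vanishes unless $\bnu^{k+1}=\bmu^{k+1}$; this is the source of $\delta_{\bnu^{k+1},\bmu^{k+1}}$. When $\bnu^{k+1}=\bmu^{k+1}$, the powers of $(1-s)$ from the two tails, the weight, and the Jacobian add to $2|\bnu^{k+1}|+|\bka^{k+1}|+d-k=\hat\k_{k+1}$, so the leftover $\xb_k$-weight is exactly $x_1^{\k_1}\cdots x_k^{\k_k}(1-|\xb_k|)^{\hat\k_{k+1}}=W_{\hat\k}(\xb_k)$, and the outer integral becomes $\int_{T^k}P_{\bnu_k}^{\tau\hat\k}(\tau\xb_k)P_{\bmu_k}^{\hat\k}(\xb_k)W_{\hat\k}(\xb_k)\,d\xb_k$, the unnormalized $k$-variable connection coefficient at degree $n-|\bnu^{k+1}|$.

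The main (though routine) obstacle is to check that the leftover scalar constants collapse correctly: the same change of variables applied to \eqref{eq:norm1} gives a factorization $A_\nu(\k)\propto A_{\bnu_k}(\hat\k)\,A_{\bnu^{k+1}}(\bka^{k+1})$, so the tail norm produced by the inner integral cancels against the matching factor of $A_\mu(\k)$ (equal since $\bnu^{k+1}=\bmu^{k+1}$), and the $\Gamma$-prefactors in \eqref{eq:innerW} regroup into those of the $k$- and $(d-k)$-variable inner products. Tracking these factors yields $c_{\nu,\mu}^\tau(\k)=\delta_{\bnu^{k+1},\bmu^{k+1}}\,c_{\bnu_k,\bmu_k}^\tau(\hat\k,n-|\bnu^{k+1}|)$, and the corresponding statement for $\wh c_{\nu,\mu}^\tau$ then follows from \eqref{eq:conn-c-coeff} exactly as in Proposition~\ref{prop:fix-first}.
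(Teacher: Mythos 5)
Your proof is correct, and its engine---the factorization of $P_\nu^\k$ into a $k$-variable ``head'' $P_{\bnu_k}^{\hat\k}(\xb_k)$ with modified last parameter $\hat\k_{k+1}=|\bka^{k+1}|+2|\bnu^{k+1}|+d-k$ times a $\tau$-invariant ``tail''---is exactly the one behind the paper's (unwritten) proof, which simply points back to the $d=3$ model case, Proposition~\ref{prop:C(12)d=3}. Where you genuinely diverge is the concluding step. The paper's model argument is purely algebraic: expand the head $P_{\bnu_k}^{\tau\hat\k}(\tau\xb_k)$ in the $k$-variable basis $\{P_\omega^{\hat\k}\}$, multiply through by the common tail, recognize each product $P_\omega^{\hat\k}(\xb_k)\,(1-|\xb_k|)^{|\bnu^{k+1}|}P_{\bnu^{k+1}}^{\bka^{k+1}}(\xi)$ as $P_{(\omega,\bnu^{k+1})}^{\k}(x)$ (the same $\hat\k$ serves, since the tail index is unchanged), and read off the coefficients by uniqueness of the expansion in the basis $\{P_\mu^\k:|\mu|=n\}$; this yields the Kronecker delta and the coefficient identity simultaneously, with no Jacobian, no weight splitting, and no $\Gamma$-factors. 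Your route through the inner product instead extracts the delta from orthogonality of the tail basis, at the price of the constant bookkeeping you rightly flag as the main remaining work. That bookkeeping does close: the mismatch
\begin{equation*}
\frac{A_\mu(\k)}{A_{\bmu_k}(\hat\k)\,A_{\bmu^{k+1}}(\bka^{k+1})}
=\frac{(|\bka^{k+1}|+d-k+1)_{2|\bmu^{k+1}|}}{(|\k|+d+1)_{2|\bmu^{k+1}|}}
\end{equation*}
coming from \eqref{eq:norm1} is cancelled exactly by the ratio of the $\Gamma$-prefactors of \eqref{eq:innerW} for $T^d$, $T^k$ and $T^{d-k}$. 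If you want to keep the integral argument but skip even that computation, note that all stray constants are independent of $\tau$, so evaluating your identity at $\tau=(1)$, where both the $d$-variable and the $k$-variable connection coefficients reduce to Kronecker deltas, forces the combined constant to equal $1$.
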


For the cyclic permutation we obtain the following explicit formula in terms of the Racah polynomials extending Corollary~\ref{cor:C(12)d=2} and Proposition~\ref{prop:C(123)d=3}.

\begin{thm} \label{thm:C(cyclic)}
For $\nu,\mu \in \NN_0^d$ with $|\nu| = n$ and $|\mu|=n$, 
\begin{align} \label{eq:C(cyclic)}
 \wh c_{\nu,\mu}^{(12\dots d)}(\k) =  (-1)^{n+\nu_{d}}  \sqrt{w_\sR(\hat \nu; \beta, n)}\, \wh \sR_{\hat \mu}(\hat \nu; \beta, n)
\end{align}
where $\wh \sR_{\hat \mu}(\hat \nu; \beta, n)$ are the orthonormal Racah polynomials of $d-1$ variables with indices $\hat\mu=(\mu_d,\mu_{d-1},\dots,\mu_2)$, variables $\hat\nu=(|\bnu^d|,|\bnu^{d-1}|,\dots,|\bnu^{2}|)$  and parameters $\beta_{j}=\kappa_1+|\bka^{d+2-j}|+j$ for $j=0,1,\dots,d$. We also have
\begin{align} \label{eq:C(cyclic)2}
 \wh c_{\nu,\mu}^{(12\dots d)}(\k) =  (-1)^{n+\nu_{d}}  \sqrt{w_\sR(\mut; \bet, n)}\, \wh \sR_{\nut}(\mut; \bet, n),
\end{align}
where $\wh \sR_{\nut}(\mut; \bet, n)$ are the orthonormal Racah polynomials of $d-1$ variables with indices $\nut=(\nu_1,\dots,\nu_{d-1})$, variables $\mut=(|\bmu_1|,|\bmu_2|,\dots,|\bmu_{d-1}|)$ and parameters $\bet_{0}=\k_1$, $\bet_{j}=-|\bka^{j+1}|-2n-d+j$ for $j=1,\dots,d$.
\end{thm}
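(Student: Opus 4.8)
The plan is to argue by induction on $d$, using the factorization $(12\cdots d)=(12)(23\cdots d)$ together with the convolution identity \eqref{eq:c-convolution}. The base case $d=2$ is Corollary~\ref{cor:C(12)d=2}: with $\nu=(n-j,j)$ and $\mu=(n-m,m)$ one has $\mu_1+\nu_d=n-m+j$, so the sign $(-1)^{\mu_1+\nu_d}$ and the one-variable Racah polynomial there agree with \eqref{eq:C(cyclic)} once $\hat\mu=(\mu_2)$, $\hat\nu=(\nu_2)$ and $\beta_j=\k_1+|\bka^{d+2-j}|+j$ are specialized (the case $d=3$ is Proposition~\ref{prop:C(123)d=3}). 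For the inductive step I would apply Proposition~\ref{prop:convolu} with $\tau_1=(12)$ and $\tau_2=(23\cdots d)$ to write
$$
c_{\nu,\mu}^{(12\cdots d)}(\k)=\sum_{|\omega|=n} c_{\nu,\omega}^{(23\cdots d)}((12)\k)\, c_{\omega,\mu}^{(12)}(\k).
$$
The cyclic permutation $(23\cdots d)$ fixes the symbol $1$, so Proposition~\ref{prop:fix-first} (with $j=1$) forces $\omega_1=\nu_1$ and reduces $c_{\nu,\omega}^{(23\cdots d)}$ to a cyclic connection coefficient in the $d-1$ variables $y_2,\dots,y_d$, to which the induction hypothesis applies. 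The transposition $(12)\in S_2$ is handled by Proposition~\ref{prop:fix-last} (with $k=2$), which forces $\omega_j=\mu_j$ for $3\le j\le d$ and reduces $c_{\omega,\mu}^{(12)}$ to a one-variable Racah polynomial via Corollary~\ref{cor:C(12)d=2}. Together with the constraint $|\omega|=n$, these Kronecker deltas pin down a single index $\omega^{*}=(\nu_1,\,n-\nu_1-|\bmu^{3}|,\,\mu_3,\dots,\mu_d)$, exactly as in the proof of Proposition~\ref{prop:C(123)d=3}, so the sum collapses to one term.

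It then remains to recognize the resulting product as $\sqrt{w_\sR(\hat\nu;\beta,n)}\,\wh\sR_{\hat\mu}(\hat\nu;\beta,n)$. Tracking the reductions, the induction hypothesis applied to $(23\cdots d)$ produces a $(d-2)$-variable orthonormal Racah polynomial with index $(\mu_d,\dots,\mu_3)$ and variable $(|\bnu^d|,\dots,|\bnu^3|)$; these are precisely the data entering the factors $j=1,\dots,d-2$ of the target $(d-1)$-variable polynomial $\wh\sR_{\hat\mu}(\hat\nu;\beta,n)$, since $\hat\mu=(\mu_d,\dots,\mu_2)$ and $\hat\nu=(|\bnu^d|,\dots,|\bnu^2|)$. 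The one-variable factor coming from $(12)$, which carries the index $\mu_2$, should then supply exactly the remaining factor $j=d-1$ in Tratnik's product, with its variable $\omega_2^{*}=|\bnu^2|-|\bmu^{3}|$ matching the slot $x_{d-1}=|\bnu^2|$, $x_d=n$ after the boundary values are accounted for.

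The main obstacle is the parameter and normalization bookkeeping. One must pass to the normalized coefficients through \eqref{eq:conn-c-coeff}, using the Jacobi norms \eqref{eq:norm1} and the Racah norm \eqref{eq:norm}, and check that the parameter vector after the substitution $\k\mapsto(12)\k$ and the reductions of Propositions~\ref{prop:fix-first}–\ref{prop:fix-last} assembles exactly into $\beta_j=\k_1+|\bka^{d+2-j}|+j$, with the new factor contributing the last parameter $\beta_d$; one must also verify that the accumulated signs collapse to $(-1)^{\mu_1+\nu_d}$. As in Proposition~\ref{prop:C(123)d=3}, I expect the cleanest route is to show that the prefactor multiplying the product of ${}_4F_3$ series is independent of $\nu$, so that by the orthogonality \eqref{eq:discreteOP} it must coincide with the Racah weight and norm, thereby bypassing a direct term-by-term simplification.

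Finally, the second formula \eqref{eq:C(cyclic)2} follows from \eqref{eq:C(cyclic)} by the duality of Corollary~\ref{cor:dualorthogonality}: applying \eqref{eq:dualorthogonality} to the $(d-1)$-variable factor interchanges the roles of degree index and variable, sending $\sqrt{w_\sR(\hat\nu;\beta,n)}\,\wh\sR_{\hat\mu}(\hat\nu;\beta,n)$ to $\sqrt{w_\sR(\mut;\bet,n)}\,\wh\sR_{\nut}(\mut;\bet,n)$. Here I would check that the dual data produced by \eqref{eq:dualvariables} (with $d$ replaced by $d-1$) are precisely the stated $\nut=(\nu_1,\dots,\nu_{d-1})$, $\mut=(|\bmu_1|,\dots,|\bmu_{d-1}|)$, $\bet_0=\k_1$ and $\bet_j=-|\bka^{j+1}|-2n-d+j$. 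This is the concrete manifestation of the duality between variables and degree indices highlighted in the introduction, and it also makes transparent why \eqref{eq:C(cyclic)} and \eqref{eq:C(cyclic)2} are compatible with transposing the orthogonal matrix $\mathbf C$ in \eqref{eq:discreteOP}.
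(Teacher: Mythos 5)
Your proposal is correct, and its skeleton is the same as the paper's: induction on the dimension, Proposition~\ref{prop:convolu} to split the cycle into an adjacent transposition and a shorter cycle, Propositions~\ref{prop:fix-first} and~\ref{prop:fix-last} to collapse the convolution sum to a single term, the observation that the prefactor multiplying the product of ${}_4F_3$ series is independent of $\nu$ combined with the orthogonality \eqref{eq:discreteOP} to identify the normalization, and Corollary~\ref{cor:dualorthogonality} to pass to \eqref{eq:C(cyclic)2}. The one genuine difference is the factorization. You peel the transposition off the left, $(12\cdots d)=(12)(23\cdots d)$, which is the verbatim generalization of the three-variable argument in Proposition~\ref{prop:C(123)d=3}; the paper instead writes $(12\cdots d{+}1)=(12\cdots d)(d,d{+}1)$ and peels the adjacent transposition off the top. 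This swaps the roles of the two reduction lemmas: in your version Proposition~\ref{prop:fix-first} carries the induction hypothesis (applied at the permuted parameters $(12)\k$, i.e.\ with parameter list $(\k_1,\k_3,\dots,\k_{d+1})$ and degree $n-\nu_1$), while Proposition~\ref{prop:fix-last} together with Corollary~\ref{cor:C(12)d=2} supplies the new one-variable Racah factor with index $\mu_2$, which becomes the \emph{last} factor ($j=d-1$) of Tratnik's product; in the paper the induction hypothesis sits in the fix-last factor at the original $\k$, and the new one-variable factor carries the index $\mu_{d+1}$ and becomes the \emph{first} factor of the product. Neither direction buys a material simplification---the parameter bookkeeping is of the same weight---and your outline is accurate at every point that can be checked against the paper: the collapse index $\omega^{*}=(\nu_1,\,n-\nu_1-|\bmu^{3}|,\,\mu_3,\dots,\mu_d)$ is right; the parameters of the new factor do assemble into $\beta_{d-1}$ and $\beta_{d}$ via \eqref{eq:connection}; and the signs do collapse as claimed, since the induction contributes $(-1)^{\omega_2^{*}+\nu_d}$, the $(12)$ factor contributes $(-1)^{(n-|\bmu^{3}|)+\mu_2+\omega_2^{*}}$, and $n-|\bmu^{3}|+\mu_2\equiv\mu_1\pmod 2$ gives $(-1)^{\mu_1+\nu_d}$. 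The final duality step for \eqref{eq:C(cyclic)2} is exactly the paper's.
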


\begin{proof} The proof of \eqref{eq:C(cyclic)} can be done by induction similarly to the proof of Proposition~\ref{prop:C(123)d=3}. We outline the main steps below. Suppose that the statement is true in any dimension less or equal to $d$ and we want to establish it in dimension $d+1$. We can decompose the cyclic permutation as $(12\dots d+1)=\tau_1\tau_2\in S_{d+2}$, where $\tau_1=(12\dots d)$ and $\tau_2=(d,d+1)$. By Proposition~\ref{prop:convolu} we have
\begin{align} \label{eq:cycl1}
\wh c_{\nu,\mu}^{(12\dots d+1)}(\k) = \sum_{|\omega|=n}\wh c_{\nu,\omega}^{(d,d+1)} ((12\dots d)\k) \wh c_{\omega,\mu}^{(12\dots d)}(\k).
\end{align}
Applying Proposition~\ref{prop:fix-first} with $\tau_2=(d,d+1)\in \wh S^{d-1}_{d+2}$ we see that 
\begin{equation*}
\wh c_{\nu,\omega}^{(d,d+1)}(\k)=\delta_{\bnu_{d-1},\bom_{d-1}}\,\wh c_{\bnu^{d},\bom^{d}}^{(d,d+1)}(\bka^{d}, n-|\bnu_{d-1}|).
\end{equation*}
Using the last formula and the induction hypothesis (or equivalently, Corollary~\ref{cor:C(12)d=2} and equations \eqref{eq:connection}) we see that
\begin{align} \label{eq:cycl2}
\wh c_{\nu,\omega}^{(d,d+1)}((12\dots d)\k)& =   \delta_{\bnu_{d-1},\bom_{d-1}}\,(-1)^{\nu_{d}} \\
 & \times \sqrt{w_\sR(\nu_{d+1}; \beta', \nu_d+\nu_{d+1})}\, \wh \sR_{\omega_{d+1}}(\nu_{d+1}; \beta',  \nu_d+\nu_{d+1}), \notag
\end{align}
where $\beta'=(\k_1,\k_1+\k_{d+2}+1,\k_1+\k_{d+1}+\k_{d+2}+2)$.
Applying now Proposition~\ref{prop:fix-last} with $\tau_1=(12\dots d)\in S_{d}\subset S_{d+2}$ we deduce that 
\begin{equation} \label{eq:cycl3}
\wh c_{\omega,\mu}^{(12\dots d)}(\k)=\delta_{\omega_{d+1},\mu_{d+1}}\,\wh c_{\bom_{d},\bmu_{d}}^{(12\dots d)}(\hat \kappa, n-\mu_{d+1}),  
\end{equation}
where $\hat \kappa=(\kappa_1,\kappa_2,\dots,\kappa_{d},\kappa_{d+1}+\kappa_{d+2}+2\mu_{d+1}+1)$.
From the last two equations it is clear that the sum in \eqref{eq:cycl1} can have only one nonzero term corresponding to 
$\omega$ with coordinates
\begin{equation*}
\omega_j=\begin{cases} 
\nu_j & \text{ if }j\leq d-1,\\
\nu_{d}+\nu_{d+1}-\mu_{d+1} & \text{ if }j=d,\\
\mu_{d+1} & \text{ if }j=d+1.
\end{cases}
\end{equation*}
In the rest of the proof we fix $\omega$ as above and therefore formula \eqref{eq:cycl1} becomes
\begin{align} \label{eq:cycl4}
\wh c_{\nu,\mu}^{(12\dots d+1)}(\k) = \wh c_{\nu,\omega}^{(d,d+1)} ((12\dots d)\k) \wh c_{\omega,\mu}^{(12\dots d)}(\k).
\end{align}
By the induction hypothesis we can rewrite formula \eqref{eq:cycl3} as
\begin{equation} \label{eq:cycl5}
\wh c_{\omega,\mu}^{(12\dots d)}(\k)=(-1)^{|\bmu_{d}|+\omega_d}
\sqrt{w_\sR(\nu''; \beta'', n-\mu_{d+1})}\, \wh \sR_{\mu''}(\nu''; \beta'', n-\mu_{d+1}),  
\end{equation}
where $\mu''=(\mu_d,\mu_{d-1},\dots,\mu_2)$, $\nu''=(|\bnu^{d}|-\mu_{d+1},|\bnu^{d-1}|-\mu_{d+1},\dots,|\bnu^2|-\mu_{d+1})$, 
$\beta''_0=\k_1$, $\beta''_j=\k_1+|\bka^{d+2-j}|+2\mu_{d+1}+1+j$, $j=1,2,\dots,d$.
Plugging \eqref{eq:cycl2}  and \eqref{eq:cycl5} in \eqref{eq:cycl4}, we see that $\wh c_{\nu,\mu}^{(12\dots d+1)}(\k)$ contains 
a product of $d$ ${}_4F_3$-series and it is not hard to show, using the explicit formulas above, that they coincide with the product 
of the ${}_4F_3$-series defining the $d$ dimensional Racah polynomial on the right-hand side in \eqref{eq:C(cyclic)} for the 
cyclic permutation $(12\dots d+1)$. Moreover, if $w_\sR(\hat \nu, \beta, n) \vert_{d\mapsto d+1}$ denotes the $w_\sR(\hat \nu, \beta, n)$ 
with $d$ replaced by $d+1$, 
one can check that
\begin{align*}
\frac{w_\sR(\hat \nu, \beta, n) \vert_{d\mapsto d+1}}{w_\sR(\nu_{d+1}; \beta', \nu_d+\nu_{d+1})\,w_\sR(\nu''; \beta'', n-\mu_{d+1})}
\end{align*}
can be rewritten as 
\begin{align*}
\frac{(\nu_{d}+\nu_{d+1}-\mu_{d+1})!\,(\k_1+1)_{\nu_{d}+\nu_{d+1}-\mu_{d+1}}}{(\k_{d+1}+\k_{d+2}+2)_{\nu_{d}+\nu_{d+1}+\mu_{d+1}}\,(\k_1+\k_{d+1}+\k_{d+2}+2)_{\nu_{d}+\nu_{d+1}+\mu_{d+1}}},
\end{align*}
up to a positive factor independent of $\nu$. Finally, one can show that the square of the norm 
$||\sR_{\mu_{d+1}}(\nu_{d+1}; \beta',  \nu_d+\nu_{d+1})||^2$ coincides with the inverse of the right-hand side in the last formula, up to a factor independent of $\nu$, and therefore 
\begin{align*}
\frac{w_\sR(\hat \nu, \beta, n) \vert_{d\mapsto d+1}\, ||\sR_{\mu_{d+1}}(\nu_{d+1}; \beta',  \nu_d+\nu_{d+1})||^2}{w_\sR(\nu_{d+1}; \beta', \nu_d+\nu_{d+1})\,w_\sR(\nu''; \beta'', n-\mu_{d+1}) }
\end{align*}
is independent of $\nu$. This combined with the second orthogonality relation in \eqref{eq:discreteOP} proves formula  \eqref{eq:C(cyclic)} for $(12\dots d+1)$, completing the induction. Finally, equation~\eqref{eq:C(cyclic)2} follows from \eqref{eq:C(cyclic)} and  Corollary~\ref{cor:dualorthogonality}.
\end{proof}

\begin{rem} Combining Theorem~\ref{thm:C(cyclic)} with Remark~\ref{rem:Racah2}, we also have  
\begin{align} \label{eq:C(cyclic)3}
 \wh c_{\nu,\mu}^{(12\dots d)}(\k) =  (-1)^{n+\nu_{d}}  \sqrt{w_\sR(\mu'; \beta', n)}\, \wh \sR'_{\nu'}(\mu'; \beta', n)
\end{align}
where $\wh \sR'_{\nu'}(\mu'; \beta', n)$ is the second family of orthonormal Racah polynomials of $d-1$ variables with indices $\nu'=(\nu_{d-1},\nu_{d-2},\dots,\nu_1)$, variables $\mu'=(|\bmu^d|,|\bmu^{d-1}|,\dots,|\bmu^{2}|)$ and parameters $\beta_{j}'=|\bka^{d+1-j}|+j$ for $j=0,1,\dots,d-1$, $\beta_{d}'=-2n-\k_1$.
\end{rem}

\begin{rem} Note that $P^{(d,d+1);\k}_{\nu}(x)=(-1)^{\nu_{d}}P^{\k}_{\nu}(x)$. 
Using Propositions \ref{prop:fix-first}, \ref{prop:fix-last} and Theorem~\ref{thm:C(cyclic)} we see that for every cyclic permutation of the form $\tau=(j,j+1,\dots,k-1,k)$, the corresponding connection coefficients can be written explicitly in terms of appropriate Racah polynomials in $k-j$ variables when $k\leq d$ and $d-j$ variables when $k= d+1$. For instance, for the adjacent transpositions $(j,j+1)$ we obtain the following explicit formulas:
\begin{align*}
\wh c_{\nu,\mu}^{(j,j+1)}(\k)=(-1)^{\mu_j+\nu_{j+1}}\delta_{\bnu_{j-1},\bmu_{j-1}}\,\delta_{\bnu^{j+2},\bmu^{j+2}}\,
\sqrt{w_R(\nu_{j+1}; \s)} \wh R_{\mu_{j+1}}(\l(\nu_{j+1}); \s) 
\end{align*}
where $\s=(-\nu_{j}-\nu_{j+1}-1,|\bka^{j+1}|+|\bnu^{j}|+|\bnu^{j+2}|+d-j, |\bka^{j+2}|+2|\bnu^{j+2}|+d-j-1,\k_j)$ for $j=1,\dots,d-1$, and 
\begin{align*}
\wh c_{\nu,\mu}^{(d,d+1)}(\k)=(-1)^{\nu_{d}}\delta_{\nu,\mu}.
\end{align*}
For more complicated permutations, we can apply Proposition~\ref{prop:convolu} and express the coefficients as sums of products of Racah polynomials. Furthermore, Remark \ref{rem:5.7} applies in $d$ dimensional setting. 
\end{rem}

\section{Hahn polynomials of several variables}\label{se7}
\setcounter{equation}{0}

Let $N$ be a positive integer. We use homogeneous coordinates of $\ZZ^{d+1}$, where
$$
\ZZ_N^{d+1} := \{\a: \a \in \NN_0^{d+1}, |\a| = N\}.
$$
For $\k \in \RR^{d+1}$ with $\k_i > -1$, $1 \le i \le d+1$, the Hahn weight function is defined by 
 \begin{equation}\label{eq:HK-homo}
  \sH_{\k,N} (x) =   \frac{(\k + \one)_x}{x!},  \qquad   x \in \ZZ_N^{d+1},
\end{equation}
where $\one=(1,\dots,1)\in\NN^{d+1}$. The Hahn polynomials are orthogonal with respect to the inner product  
\begin{equation}\label{eq:sum-homo}
     \la f, g\ra_{\sH_{\k,N}} =  \frac{N!}{(|\k|+ d+1)_N}\sum_{\a \in \ZZ_N^{d+1} } f(\a) g(\a) \sH_{\k,N}(\a).
\end{equation}
The Jacobi polynomials on the simplex can be used as a generating function for one orthogonal basis of the Hahn polynomials.
Let $P_\nu^\k$ be defined as in \eqref{eq:Pnu} and let 
$$
 p_\nu^\k: = \prod_{j=1}^d P_j^{(a_j(\k,\nu), \k_j)}(1) = \prod_{j=1}^d \frac{(a_j(\k,\nu)+1)_{\nu_j}}{\nu_j!}.
$$

\begin{defn} \label{def:Hahn}
Let $\k\in\RR^{d+1}$ with $\kappa_i>-1$ and $N\in\NN$. For $\nu\in\NN_0^d$, $|\nu|\le N$, define
the Hahn polynomials $\sH_\nu(\a; \k ,N)$ for $\a \in \ZZ_N^{d+1}$ by 
\begin{equation}\label{Hahngenfunc}
    P_{\nu,N}(y) = |y|^N \frac{P_\nu^\k \Big ( \f {y'} {|y|} \Big)}{p_\nu^\k}
   = \sum_{|\alpha| = N} \frac{N!}{\alpha!}\sH_\nu(\alpha; \kappa,N)y^\alpha,
\end{equation}
where $y = (y', y_{d+1}) \in \RR^{d+1}$. 
\end{defn}

These polynomials are given explicitly in terms of the classical Hahn polynomials  \
\begin{equation*}
  \sQ_n(x; a, b, N) := {}_3 F_2 \left( \begin{matrix} -n, n+a + b+1, -x\\
       a+1, -N \end{matrix}; 1\right), \qquad n = 0, 1, \ldots, N.    
\end{equation*}

\begin{prop} \label{prop:Hahn1}
For $x \in \ZZ_N^{d+1}$ and $\nu \in \NN_0^d$, $|\nu| \le N$, 
\begin{align}\label{eq:Hn-prod}
\sH_\nu(x;\kappa, N) =& \frac{(-1)^{|\nu|}}{(-N)_{|\nu|}} 
\prod_{j=1}^d  \frac{(\kappa_j+1)_{\nu_j}}{(a_j+1)_{\nu_j}}(-N+|\xb_{j-1}|+|\bnu^{j+1}|)_{\nu_j}  \\ 
    & \times  
    \sQ_{\nu_j}(x_j; \kappa_j, a_j, N- |\xb_{j-1}|-|\bnu^{j+1}|), \notag
\end{align}
where $a_j = a_j(\k,\nu)$ are defined in \eqref{eq:aj}. 
The polynomials in $\{\sH_\nu(x; \kappa,N): |\nu| = n\}$ form an orthogonal basis of $\CV_n^d(\sH_{\k,N})$ 
and $\sB_\nu := \la \sH_\nu(\cdot; \kappa,N), \ \sH_\nu(\cdot; \kappa,N) \ra_{\sH_{\k,N}}$ 
is given by, setting $\l_\k : = |\k|+d+1$, 
\begin{align*} 
\sB_\nu(\k, N)  :=\frac{(-1)^{|\nu|}(\l_\k)_{N+|\nu|}}
   {(-N)_{|\nu|} (\l_\k)_N (\l_{\k})_{2|\nu|}} 
   \prod_{j=1}^d \frac{(\k_j+a_j+1)_{2\nu_j}  (\k_j+1)_{\nu_j} \nu_j! }
    { (\kappa_j+a_j + 1)_{\nu_j} (a_j+1)_{\nu_j}}.
\end{align*}
\end{prop}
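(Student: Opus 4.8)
The plan is to establish the three claims in sequence: the closed form \eqref{eq:Hn-prod}, then mutual orthogonality (which upgrades to a basis by a dimension count), and finally the norm $\sB_\nu$. The explicit formula is obtained by homogenizing the generating function \eqref{Hahngenfunc} directly, while orthogonality and the norm are reduced to the one-variable Hahn theory through the nested product structure of \eqref{eq:Hn-prod} and the fact that $\sH_{\k,N}$ is a Dirichlet--multinomial weight.

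For the explicit formula, first I would substitute the product form \eqref{eq:Pnu} of $P_\nu^\k$ into \eqref{Hahngenfunc} and put $x_i=y_i/|y|$. Then $1-|\xb_{j-1}|=(y_j+\cdots+y_{d+1})/|y|$, so the argument of the $j$-th Jacobi factor is $2y_j/(y_j+\cdots+y_{d+1})-1$; rewriting that factor by \eqref{Jacobi2}, the variable of the ensuing terminating ${}_2F_1$ becomes $-(y_{j+1}+\cdots+y_{d+1})/y_j$. After multiplying by $(1-|\xb_{j-1}|)^{\nu_j}$ and by $|y|^N$ and cancelling the leading Pochhammer constants against $p_\nu^\k$, the generating function collapses to
\begin{equation*}
 P_{\nu,N}(y)=|y|^{N-|\nu|}\prod_{j=1}^{d} y_j^{\nu_j}\,{}_2F_1\!\left(\begin{matrix}-\nu_j,\,-\nu_j-\k_j\\ a_j+1\end{matrix};\,-\frac{y_{j+1}+\cdots+y_{d+1}}{y_j}\right).
\end{equation*}
Expanding $|y|^{N-|\nu|}$ by the multinomial theorem and reading off the coefficient of $y^\alpha$, the inner ${}_2F_1$ sums combine with the multinomial coefficients; a Chu--Vandermonde evaluation (equivalently the classical one-variable Hahn generating relation of \cite{KM,X14}), applied to each factor in turn, converts every such combination into a single ${}_3F_2$, namely $\sQ_{\nu_j}$ with the nested sample size $N-|\xb_{j-1}|-|\bnu^{j+1}|$, and the residual constants assemble into the prefactor of \eqref{eq:Hn-prod}. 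This step is essentially bookkeeping and I expect no real obstruction.

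For orthogonality I would use that $\sH_{\k,N}(x)=(\k+\one)_x/x!$ on $\ZZ_N^{d+1}$ is a Dirichlet--multinomial weight and therefore factors as a chain of one-variable beta-binomial (classical Hahn) weights whose conditional sample sizes are precisely the $N-|\xb_{j-1}|-|\bnu^{j+1}|$ occurring in \eqref{eq:Hn-prod}. Summing $\sum_{|x|=N}\sH_\nu(x)\sH_\mu(x)\sH_{\k,N}(x)$ against this factorization and peeling the variables off in the nested order, one applies the one-variable Hahn orthogonality at each stage, which forces the components of $\nu$ and $\mu$ to agree one at a time and yields $\delta_{\nu,\mu}$. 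Since there are $\binom{n+d-1}{n}=\dim\CV_n^d(\sH_{\k,N})$ such nonzero polynomials of degree $n$, mutual orthogonality gives that they form a basis.

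Finally, setting $\nu=\mu$ in the same nested sum produces $\sB_\nu$ as a product of one-variable Hahn squared norms, which I would collapse into the stated closed form. A convenient shortcut and cross-check is the structural identity
\begin{equation*}
 \sB_\nu(\k,N)=\frac{(-1)^{|\nu|}(\l_\k)_{N+|\nu|}}{(-N)_{|\nu|}(\l_\k)_N}\,\frac{A_\nu(\k)}{(p_\nu^\k)^2},
\end{equation*}
in which the $\nu$-dependent products of \eqref{eq:norm1} and of $\sB_\nu$ are reciprocal in the factors $(a_j+1)_{\nu_j}/\nu_j!$ exactly because of the $p_\nu^\k$ normalization in Definition~\ref{def:Hahn}; this leaves only a constant depending on $|\nu|=n$, which can be pinned down by specializing to $\nu=(n,0,\dots,0)$, where the whole computation reduces to the one variable case. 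The hard part will be this last bookkeeping: because the one-variable parameters $a_j$ and the conditional sample sizes depend on the remaining indices, the nested summation must be performed in the correct order and one must check that the $\nu$- and $\mu$-factors have matching parameters at the stage where each one-variable orthogonality is invoked, and then the product of Pochhammer symbols must be simplified to the compact expression for $\sB_\nu$.
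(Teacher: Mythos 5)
The paper itself offers no proof of this proposition: it states the result and defers to the literature, remarking that the explicit formula and norm ``were stated in \cite{KM} by inductive formulas'' and that the generating-function identification is in \cite{IX07,X14}. So your proposal is not competing with an in-paper argument; rather, it reconstructs the cited proofs, and it does so soundly. Your intermediate formula
\begin{equation*}
 P_{\nu,N}(y)=|y|^{N-|\nu|}\prod_{j=1}^{d} y_j^{\nu_j}\,{}_2F_1\left(\begin{matrix}-\nu_j,\,-\nu_j-\k_j\\ a_j+1\end{matrix};\,-\frac{y_{j+1}+\cdots+y_{d+1}}{y_j}\right)
\end{equation*}
is exactly what \eqref{eq:Pnu}, \eqref{Jacobi2} and the cancellation against $p_\nu^\k$ give, and iterating the one-variable Hahn generating relation factor by factor is precisely the induction of \cite{X14}. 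Your peeling argument for orthogonality and the norm, with $x_d$ summed innermost, is the Karlin--McGregor chain decomposition of the Dirichlet--multinomial weight, and the point you flag as the hard part is indeed the crux --- and it does go through: after summing over $x_d$, the squared prefactor $[(-N+|\xb_{d-1}|)_{\nu_d}]^2$ combines with the factor $(-M)_{\nu_d}$, $M=N-|\xb_{d-1}|$, sitting in the denominator of the one-variable Hahn norm, and the residual $M$-dependence collapses to $(a_{d-1}+1)_{M'-x_{d-1}}/(M'-x_{d-1})!$ with $M'=N-|\xb_{d-2}|-\nu_d$, i.e.\ exactly the beta-binomial weight with the shifted parameter $a_{d-1}=\k_d+\k_{d+1}+2\nu_d+1$; this is why the $a_j$ have the form \eqref{eq:aj}, and the same mechanism propagates outward by induction.

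Two caveats. First, the named identity in your coefficient extraction is off: reading off the coefficient of $y^\alpha$ automatically yields a single terminating ${}_3F_2$ at $1$ (no Chu--Vandermonde is needed for that), but converting it into the Hahn form $\sQ_{\nu_j}(x_j;\k_j,a_j,\cdot)$ requires a Thomae-type ${}_3F_2$ transformation, which Chu--Vandermonde (a ${}_2F_1(1)$ evaluation) cannot deliver; your fallback of quoting the classical one-variable Hahn generating relation of \cite{KM,X14} is the correct fix, so this is a misdescription rather than a gap. Second, your ``shortcut'' to the norm is circular if used as a proof: the structural identity you write down is \eqref{eq:B-A}, which in the paper is a \emph{corollary} of this very proposition, and the assertion that $\sB_\nu(\k,N)(p_\nu^\k)^2/A_\nu(\k)$ depends only on $|\nu|$ cannot be read off from the two formulas you are in the middle of proving. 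It is legitimate as a cross-check, and pinning the constant at $\nu=(n,0,\dots,0)$ is a good consistency test, but the norm must actually come from your primary route (collapsing the product of one-variable Hahn norms), or alternatively from the argument of \cite{X14}, which integrates the generating identity \eqref{Hahngenfunc} against $W_\k$ over the simplex and thereby obtains the $|\nu|$-dependence of the ratio independently.
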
 

The explicit formula for $\sH_\nu(\cdot; \kappa,N)$ and its norm were stated in \cite{KM} by inductive formulas. 
The generating function was later identified as the Jacobi polynomials on the simplex and the basis was given
explicitly; see \cite{IX07, X14} for further references. 

A useful observation is that $\sB_\nu(\k,N)(p_\nu^\k)^2$ and $A_\nu(\k) = \la P_\nu^\k, P_\nu^\k \ra_{W_\k}$ differ by a 
constant that depends only on $|\nu|$ (see \cite{X14}).

\begin{cor}
For $\nu \in \NN_0^d$,
\begin{equation} \label{eq:B-A}
  \sB_\nu(\k,N) = \frac{(-1)^{|\nu|}(|\k|+d+1)_{N+|\nu|}} {(-N)_{|\nu|}\, (|\k|+d+1)_N} \frac{A_{\nu}(\k)}{(p_\nu^\k)^2}. 
\end{equation}
\end{cor}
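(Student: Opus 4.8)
The plan is to verify the identity by direct substitution of the three explicit formulas already at our disposal: the expression for $\sB_\nu(\k,N)$ in Proposition~\ref{prop:Hahn1}, the formula \eqref{eq:norm1} for $A_\nu(\k)$, and the definition $p_\nu^\k = \prod_{j=1}^d (a_j+1)_{\nu_j}/\nu_j!$. The guiding observation, already flagged in the text preceding the statement, is that $\sB_\nu(\k,N)(p_\nu^\k)^2$ and $A_\nu(\k)$ should differ only by a factor depending on $|\nu|$; squaring $p_\nu^\k$ is precisely what is needed to cancel all the genuinely $j$-dependent pieces.

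First I would record
$$
(p_\nu^\k)^2 = \prod_{j=1}^d \frac{(a_j+1)_{\nu_j}^2}{(\nu_j!)^2},
$$
and then compare the two $j$-indexed products appearing in $A_\nu(\k)$ and in $\sB_\nu(\k,N)$. The former contributes the factor $(a_j+1)_{\nu_j}/\nu_j!$, while the latter contributes its reciprocal $\nu_j!/(a_j+1)_{\nu_j}$; every other factor, namely $(\k_j+a_j+1)_{2\nu_j}(\k_j+1)_{\nu_j}/(\k_j+a_j+1)_{\nu_j}$, is common to both. Hence multiplying the $\sB_\nu$-product by $(p_\nu^\k)^2$ replaces $\nu_j!/(a_j+1)_{\nu_j}$ by $(a_j+1)_{\nu_j}/\nu_j!$ term by term, turning it exactly into the $j$-indexed product of $A_\nu(\k)$.

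Next I would assemble the prefactors. Writing $\l_\k = |\k|+d+1$, the product in \eqref{eq:norm1} equals $A_\nu(\k)(\l_\k)_{2|\nu|}$, so the cancellation above yields
$$
\sB_\nu(\k,N)(p_\nu^\k)^2 = \frac{(-1)^{|\nu|}(\l_\k)_{N+|\nu|}}{(-N)_{|\nu|}(\l_\k)_N (\l_\k)_{2|\nu|}} \, A_\nu(\k)(\l_\k)_{2|\nu|}.
$$
The factor $(\l_\k)_{2|\nu|}$ then cancels, and dividing through by $(p_\nu^\k)^2$ produces exactly \eqref{eq:B-A}.

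Since every manipulation is an algebraic identity among Pochhammer symbols, there is no real analytic obstacle; the only care required is in tracking the reciprocal factor $(a_j+1)_{\nu_j}/\nu_j!$ versus $\nu_j!/(a_j+1)_{\nu_j}$, so that it is the \emph{square} of $p_\nu^\k$ (rather than $p_\nu^\k$ itself) that effects the cancellation. Equivalently, the entire computation reduces to the single observation that $\sB_\nu(\k,N)(p_\nu^\k)^2/A_\nu(\k)$ is manifestly independent of the individual coordinates of $\nu$, depending on $\nu$ only through $|\nu|$ via the ratio $(-1)^{|\nu|}(\l_\k)_{N+|\nu|}/[(-N)_{|\nu|}(\l_\k)_N]$.
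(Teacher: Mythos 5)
Your proposal is correct and is essentially the paper's own argument: the paper treats the corollary as an immediate consequence of the explicit formulas for $\sB_\nu(\k,N)$, $A_\nu(\k)$ and $p_\nu^\k$ (citing \cite{X14} for the observation that $\sB_\nu(\k,N)(p_\nu^\k)^2/A_\nu(\k)$ depends only on $|\nu|$), and your computation is exactly that verification carried out in detail. The termwise cancellation you track, replacing $\nu_j!/(a_j+1)_{\nu_j}$ by $(a_j+1)_{\nu_j}/\nu_j!$ via the square of $p_\nu^\k$, and the final cancellation of $(\l_\k)_{2|\nu|}$, are both correct.
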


\begin{rem}
The definition of $P_\nu^\k$ in this paper differs from the one used in \cite{X14} by the constant $p_\nu^\k$; more
precisely, $P_\nu^\k(x) /p_\nu^\k$ is defined as $P_\nu^\k$ in \cite{X14}. 
\end{rem}

Let $\tau$ be a permutation in $S_{d+1}$. It is easy to see that $\{\sH_\nu (\tau\{\cdot\}, \tau \k, N): |\nu| = n\}$ is also 
an orthogonal basis of  $\CV_n^d(\sH_{\k,N})$. Hence, we can consider the connection coefficients, 
$\hs_{\mu,\nu}^\tau(\k)$, of the Hahn polynomials  
\begin{equation} \label{eq:cc-Hahn}
 \sH_\nu (\tau \a; \tau \k,N) = \sum_{|\mu| =n} \hs_{\nu,\mu}^\tau (\k) \sH_\mu (\a; \k, N).
\end{equation}
By the orthogonality, it follows immediately that 
$$
  \hs_{\nu,\mu}^\tau(\k) = \frac{1}{\sB_\mu(\k,N)} \la   \sH_\nu (\tau\{\cdot\}; \tau \k,N), \sH_\mu (\cdot; \k, N) \ra_{\sH_{\k,N}}.
$$
Let $\wh \sH_\mu (\cdot; \k,N) = [\sB_{\mu}(\k,N)]^{-1/2} \sH_\mu (\cdot; \k,N)$. Then $\{\wh \sH_\mu (\cdot; \k,N):|\mu|=n\}$ is an 
orthonormal basis of $\CV_n^d(\sH_{\k, N})$. We can write \eqref{eq:cc-Hahn} as 
$$
  \wh \sH_\nu (\tau \a; \tau \k,N) = \sum_{|\mu| = n} \wh \hs_{\nu,\mu}^\tau (\k)  \wh \sH_\mu (\a; \k,N) \quad
   \hbox{with} \quad \wh \hs_{\nu,\mu}^\tau (\k) := \sqrt{\frac{\sB_\mu(\k,N)}{\sB_\nu(\tau \k,N)}} \hs_{\nu,\mu}^\tau (\k).
$$
These connection coefficients are closely related to those of the Jacobi polynomials on the simplex, as shown in the 
following result. 

\begin{thm}
For $\nu, \mu \in \NN_0^d$ and $\tau \in S_{d+1}$, 
\begin{equation} \label{eq:con-coef1}
 \hs_{\nu,\mu}^\tau(\k) =  \frac{p_\mu^{\k}} {p_\nu^{\tau\k}}c_{\nu,\mu}^\tau (\k) \quad\hbox{and}\quad
  \wh \hs_{\nu,\mu}^\tau(\k) =  \wh c_{\nu,\mu}^{\, \tau} (\k).
\end{equation}
\end{thm}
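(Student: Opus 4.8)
The plan is to transport the Jacobi connection identity \eqref{defn:cc} to the Hahn setting through the generating function \eqref{Hahngenfunc}. The key observation is that homogenization linearizes the $S_{d+1}$-action: writing $y=(y',y_{d+1})\in\RR^{d+1}$ and $x=y'/|y|$, we have $y/|y|=X=(x_1,\ldots,x_d,1-|x|)$, so permuting the coordinates of $y$ by $\tau$ is exactly the simplex action, i.e. $(\tau y)/|y|=\tau X$ and hence $(\tau y)'/|y|=\tau x$. Abbreviating $G_\nu^\k(y):=|y|^N P_\nu^\k(y'/|y|)/p_\nu^\k$ for the left side of \eqref{Hahngenfunc}, I would first record the elementary permutation identity
\[
  G_\nu^{\tau\k}(\tau y)=\sum_{|\alpha|=N}\frac{N!}{\alpha!}\,\sH_\nu(\tau\alpha;\tau\k,N)\,y^\alpha,
\]
obtained by relabelling the summation index $\beta=\tau\alpha$ in the expansion of $G_\nu^{\tau\k}$ and using $|\tau y|=|y|$, $(\tau y)^\beta=y^{\tau^{-1}\beta}$, and $(\tau\alpha)!=\alpha!$. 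Thus $G_\nu^{\tau\k}(\tau y)$ is precisely the generating function of the permuted Hahn polynomials $\sH_\nu(\tau\{\cdot\};\tau\k,N)$ appearing in \eqref{eq:cc-Hahn}.

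Next I would evaluate the same quantity geometrically. Since $|\tau y|=|y|$ and $(\tau y)'/|y|=\tau x$, one gets $G_\nu^{\tau\k}(\tau y)=|y|^N P_\nu^{\tau\k}(\tau x)/p_\nu^{\tau\k}$. Applying the Jacobi relation \eqref{defn:cc} to $P_\nu^{\tau\k}(\tau x)$ and rehomogenizing each summand via $|y|^N P_\mu^\k(y'/|y|)=p_\mu^\k\,G_\mu^\k(y)$ yields
\[
  G_\nu^{\tau\k}(\tau y)=\sum_{|\mu|=n}\frac{p_\mu^\k}{p_\nu^{\tau\k}}\,c_{\nu,\mu}^\tau(\k)\,G_\mu^\k(y).
\]
Both sides are homogeneous of degree $N$ in $y$ (note $n=|\nu|=|\mu|\le N$), so expanding each $G$ through \eqref{Hahngenfunc} and comparing the coefficient of $y^\alpha$ gives $\sH_\nu(\tau\alpha;\tau\k,N)=\sum_{|\mu|=n}(p_\mu^\k/p_\nu^{\tau\k})\,c_{\nu,\mu}^\tau(\k)\,\sH_\mu(\alpha;\k,N)$. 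Comparing with \eqref{eq:cc-Hahn} and using the linear independence of $\{\sH_\mu(\cdot;\k,N):|\mu|=n\}$ proves the first identity $\hs_{\nu,\mu}^\tau(\k)=(p_\mu^\k/p_\nu^{\tau\k})\,c_{\nu,\mu}^\tau(\k)$.

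For the normalized statement I would substitute the norm relation \eqref{eq:B-A} into the definition $\wh\hs_{\nu,\mu}^\tau(\k)=\sqrt{\sB_\mu(\k,N)/\sB_\nu(\tau\k,N)}\,\hs_{\nu,\mu}^\tau(\k)$. The decisive simplification is that $|\tau\k|=|\k|$, and since $|\mu|=|\nu|=n$ the factor in \eqref{eq:B-A} depending only on $n$, $N$, $d$ and $|\k|$ is identical for $\sB_\mu(\k,N)$ and $\sB_\nu(\tau\k,N)$; it cancels in the ratio, leaving $\sB_\mu(\k,N)/\sB_\nu(\tau\k,N)=\bigl(A_\mu(\k)/A_\nu(\tau\k)\bigr)\bigl(p_\nu^{\tau\k}/p_\mu^\k\bigr)^2$. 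Taking the square root and multiplying by $\hs_{\nu,\mu}^\tau(\k)=(p_\mu^\k/p_\nu^{\tau\k})\,c_{\nu,\mu}^\tau(\k)$, the $p$-factors cancel exactly and one recovers $\sqrt{A_\mu(\k)/A_\nu(\tau\k)}\,c_{\nu,\mu}^\tau(\k)=\wh c_{\nu,\mu}^{\,\tau}(\k)$ by \eqref{eq:conn-c-coeff}. The only delicate points are the bookkeeping of the $S_{d+1}$-action on the multi-indices $\alpha$ in the first step and the verification of the clean cancellation of the $p_\mu^\k$ factors in the normalization; both are forced by the identities $|\tau y|=|y|$, $|\tau\k|=|\k|$ and $|\nu|=|\mu|$, so no genuine estimate is required.
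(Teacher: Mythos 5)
Your proof is correct, and it is a genuine variant of the paper's argument rather than a reproduction of it. Both proofs transport the Jacobi relation \eqref{defn:cc} through the generating function \eqref{Hahngenfunc}, and both finish the normalized statement the same way (via \eqref{eq:B-A}, $|\tau\k|=|\k|$, $|\nu|=|\mu|$, and \eqref{eq:conn-c-coeff}), but the mechanism of the first identity differs. The paper expands $P_\nu^{\tau\k}(\tau x)/p_\nu^{\tau\k}$ by \eqref{Hahngenfunc} with $y=\tau X$ and then invokes an external inversion formula, \cite[Cor.~3.5]{X14}, which expresses the monomials $X^\a$ back in terms of the products $\sH_\mu(\a;\k,N)\,P_\mu^\k(x)/p_\mu^\k$; after permuting the summation index it recognizes the resulting coefficient as the Hahn inner product $\la \sH_\nu(\tau\{\cdot\};\tau\k,N),\sH_\mu(\cdot;\k,N)\ra_{\sH_{\k,N}} = \sB_\mu(\k,N)\,\hs^\tau_{\nu,\mu}(\k)$, and concludes by matching against the definition of $c^\tau_{\nu,\mu}(\k)$ on the \emph{Jacobi} side. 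You run the argument in the opposite direction: you evaluate $G_\nu^{\tau\k}(\tau y)$ twice, once as the generating function of the permuted Hahn polynomials (pure relabelling, using $(\tau\a)!=\a!$) and once by dehomogenizing, applying \eqref{defn:cc}, and rehomogenizing; comparing coefficients of $y^\a$ and invoking uniqueness of the expansion \eqref{eq:cc-Hahn} (linear independence of the mutually orthogonal family $\{\sH_\mu(\cdot;\k,N):|\mu|=n\}$) identifies the coefficients on the \emph{Hahn} side. What each buys: your route is more self-contained, needing only the definition \eqref{Hahngenfunc} and elementary linear algebra, with no appeal to the inversion formula of \cite{X14}; the paper's route lands directly on the inner-product expression for $\hs^\tau_{\nu,\mu}(\k)$, so it never needs a uniqueness argument and reuses machinery already established in \cite{X14}. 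Your treatment of the bookkeeping ($|\tau y|=|y|$, $(\tau y)'/|y|=\tau x$, positivity of the $p$-factors so the square roots cancel cleanly) is sound, so there is no gap.
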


\begin{proof}
It is known \cite[Cor. 3.5]{X14} that, 
for $\a \in \ZZ_N^{d+1}$ and $x\in \RR^d$ we have
$$
  X^\a = \frac{(\k+\one)_\a}{ (|\k|+d+1)_N} \sum_{|\mu| \le N}
              \frac{\sH_\mu(\a; \k, N)}{ \sB_\mu(\k, N)} \frac{P_\mu^\k(x)}{p_\mu^\k},
$$
where $X=(x,1-|x|)$. Since $(\tau X)^\a = X^{\tau^{-1} \a}$, using the above identity and \eqref{Hahngenfunc} with $y = \tau X$, we obtain
\begin{align*}
 \frac{P_\nu^{\tau \k}(\tau x)}{p_\nu^{\tau\k}} = & \sum_{|\a|=N} \frac{N!}{\a!}  \sH_\nu (\a;\tau\k,N) (\tau X)^\a\\
 = & \sum_{|\a|=N} \frac{N!}{\a!}  \sH_\nu (\a;\tau\k,N) 
    \frac{( \k+\one)_{\tau \a} }{ (|\k|+d+1)_N} \sum_{|\mu| \le N}
              \frac{\sH_\mu(\tau^{-1} \a; \k, N)}{ \sB_\mu(\k, N)} \frac{P_\mu^{\k}(x)}{p_\mu^{\k}}.
\end{align*}
Since the summation is over all $\a$ such that $|\a| = N$, permuting the order of the summation by $\tau$ and 
using $(\tau\a)! = \a!$, it follows that 
\begin{align*}
 \frac{P_\nu^{\tau ;\k}(x)}{p_\nu^{\tau \k}} 
    & = \frac{N!}{(|\k|+d+1)_N}   \sum_{|\a|=N} \frac{(\k+\one)_\a}{\a!} \sH_\nu (\tau \a; \tau \k,N)  
 \sum_{|\mu|\le N}\frac{\sH_\mu (\a; \k, N)} {\sB_\mu(\k,N)}  \frac{P_\mu^{\k}(x)}{p_\mu^{\k}}\\
     & =  \sum_{|\mu|\le N}  \frac{1}{ \sB_\mu(\k,N)} \la   \sH_\nu (\tau \{\cdot\}; \tau \k, N) \sH_\mu (\cdot;\k,N)
     \ra_{\sH_{\k,N}} \frac{P_\mu^{\k}(x)}{p_\mu^{\k}}\\
      & =  \sum_{|\mu|\le N}  \hs_{\nu,\mu}^\tau(\k) \frac{P_\mu^{\k}(x)}{p_\mu^{\k}}.
\end{align*}
Consequently, \eqref{eq:con-coef1} follows directly from the definition of $c_{\nu,\mu}^\tau(\k)$. Finally, using
\eqref{eq:B-A}, it is easy to verify the identity for $\wh \hs_{\nu,\mu}^\tau(\k)$. 
\end{proof}

\begin{cor}
The connection coefficients $\hs_{\nu,\mu}^\tau(\k)$ of the Hahn polynomials are independent of $N$. 
\end{cor}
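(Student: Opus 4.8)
The plan is to read the result straight off the identity \eqref{eq:con-coef1} established in the preceding theorem, namely
\[
\hs_{\nu,\mu}^\tau(\k) = \frac{p_\mu^\k}{p_\nu^{\tau\k}}\, c_{\nu,\mu}^\tau(\k).
\]
The whole point is that every factor on the right-hand side is manifestly free of $N$, even though the Hahn polynomials $\sH_\nu(\cdot;\k,N)$, their weight $\sH_{\k,N}$, and hence a priori their connection coefficients $\hs_{\nu,\mu}^\tau(\k)$ defined in \eqref{eq:cc-Hahn}, all carry a dependence on $N$.

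To make this explicit I would check the three factors in turn. The normalizing constants $p_\nu^\k = \prod_{j=1}^d (a_j(\k,\nu)+1)_{\nu_j}/\nu_j!$ and $p_\nu^{\tau\k}$ depend only on $\k$ (respectively $\tau\k$), on $\nu$, and on the dimension $d$ through the quantities $a_j(\k,\nu)$ defined in \eqref{eq:aj}; no $N$ enters. The remaining factor $c_{\nu,\mu}^\tau(\k)$ is the connection coefficient of the Jacobi polynomials on the simplex, defined in \eqref{defn:cc} purely in terms of the continuous bases $\{P_\nu^{\tau\k}(\tau\{\cdot\})\}$ and $\{P_\mu^\k\}$ from \eqref{eq:Pnu}, which involve no discrete parameter $N$ whatsoever. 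Hence the product $(p_\mu^\k/p_\nu^{\tau\k})\, c_{\nu,\mu}^\tau(\k)$ is independent of $N$, and the corollary follows.

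There is no genuine obstacle here: the substantive work has already been done in establishing \eqref{eq:con-coef1}, whose content is precisely that the $N$-dependence arising in the Hahn generating-function expansion \eqref{Hahngenfunc} cancels between $\sH_\nu$ and the norms $\sB_\mu(\k,N)$. The only thing left to observe is that the surviving expression carries no hidden $N$, which is immediate from the explicit product formula for $p_\nu^\k$ and from the fact that $c_{\nu,\mu}^\tau(\k)$ belongs entirely to the continuous theory. An alternative but less economical route would be to substitute the explicit formula \eqref{eq:Hn-prod} for $\sH_\nu$ into the defining relation \eqref{eq:cc-Hahn} and verify directly that the $N$-dependent Pochhammer factors cancel against those coming from $\sB_\nu$ and $\sB_\mu$; the identity \eqref{eq:con-coef1} renders this computation unnecessary.
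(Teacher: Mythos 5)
Your proof is correct and is precisely the paper's argument: the corollary is stated as an immediate consequence of the identity \eqref{eq:con-coef1}, whose right-hand side $\frac{p_\mu^\k}{p_\nu^{\tau\k}}\,c_{\nu,\mu}^\tau(\k)$ involves only the simplex Jacobi connection coefficients and the constants $p_\nu^\k$, none of which depends on $N$. Your explicit factor-by-factor check of this $N$-independence is exactly the (implicit) content of the paper's deduction.
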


In particular, for the cyclic permutation, these coefficients can be given in terms of Racah polynomials, as shown in
Theorem~\ref{thm:C(cyclic)}. 

\section{Krawtchouk polynomials of several variables}\label{se8}
\setcounter{equation}{0}

For $\rho \in \RR^d$ with $0 < \rho_i <1$, 
$1\le i \le d$ and $|\rho| < 1$, the Krawtchouk weight function of $d$ variables 
is defined by 
\begin{equation}\label{eq:weightK}
   \sK_{\rho,N}(x): =   N!\prod_{i=1}^{d} \frac{\rho_i^{x_i}}{x_i!} \frac{(1-|\rho|)^{N-|x|}}{(N-|x|)!},
   \quad x \in \NN_0^d, \quad |x| \le N,
\end{equation}
and the Krawtchouk polynomials of several variables are discrete orthogonal polynomials with respect to 
the inner product 
\begin{equation}\label{eq:ipdK}
    \la f, g \ra_{\sK_{\rho, N}}: = \sum_{x \in \NN_0^d: |x| \le N} f(x) g(x) \sK_{\rho, N}(x). 
\end{equation}

Using the notation \eqref{xsupj} for $\rho$, $\nu$ and $x$ we can define a family $\sK_\nu(\cdot;\rho,N)$ of the Krawtchouk polynomials as follows (see \cite{IX07}).

\begin{prop} \label{prop:Kraw}
Let $\rho \in \RR^d$ with $0 < \rho_i < 1$ and $|\rho| < 1$. 
For $\nu\in \NN_0^d$, $|\nu|\le N$, and $x \in \RR^d$, define
\begin{align}\label{eq:KrawK}
  \sK_\nu(x; \rho, N) := & \frac{1}{(-N)_{|\nu|}} 
    \prod_{j=1}^d 
       (-N+|\xb_{j-1}|+|\bnu^{j+1}|)_{\nu_j} \\
  &   \times 
  {}_2F_1 \left(\begin{matrix} -\nu_j,-x_j \\ -N+|\xb_{j-1}|+|\bnu^{j+1}|  \end{matrix}; \frac{1- |\brho_{j-1}|}{\rho_j} \right).\notag
\end{align}
The polynomials $\{\sK_\nu(\cdot;\rho, N): |\nu| = n\}$ form an orthogonal basis of $\CV_n^d(\sK_{\rho,N})$ 
and $\sC_\nu (\rho, N): = \la \sK_\nu(\cdot;\rho, N), \sK_\nu(\cdot;\rho, N) \ra_{\sK_{\rho, N}}$ is given by 
$$
\sC_\nu (\rho, N): =  \frac{(-1)^{|\nu|}}{(-N)_{|\nu|} } \prod_{j=1}^d \frac{\nu_j! (1-|\brho_j|)^{\nu_j+\nu_{j+1}}  } 
    {\rho_j^{\nu_j} },\quad \text{ where }\nu_{d+1}=0.
$$
\end{prop}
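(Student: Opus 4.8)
The plan is to realize the Krawtchouk polynomials $\sK_\nu$, together with their weight and norms, as a confluent limit of the Hahn objects from Section~\ref{se7}, so that Proposition~\ref{prop:Hahn1} can be transported directly. Concretely, I would substitute
$$\k_i + 1 = \rho_i\, t \quad (1\le i \le d), \qquad \k_{d+1}+1 = (1-|\rho|)\,t,$$
and let $t\to\infty$. With this choice $|\k|+d+1 = \sum_{i=1}^{d+1}(\k_i+1) = t$ exactly, and the normalized Hahn weight satisfies
$$\frac{N!}{(|\k|+d+1)_N}\sH_{\k,N}(\a) = \frac{N!}{(t)_N}\prod_{i=1}^{d+1}\frac{(\k_i+1)_{\a_i}}{\a_i!} \longrightarrow N!\prod_{i=1}^{d}\frac{\rho_i^{\a_i}}{\a_i!}\,\frac{(1-|\rho|)^{\a_{d+1}}}{\a_{d+1}!},$$
which, after identifying $\a=(x,N-|x|)$, is exactly $\sK_{\rho,N}(x)$; here one uses $(ct)_m\sim (ct)^m$ and $|\a|=N$ to see that all powers of $t$ cancel. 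Thus the Hahn inner product $\la\cdot,\cdot\ra_{\sH_{\k,N}}$ converges termwise to the Krawtchouk inner product $\la\cdot,\cdot\ra_{\sK_{\rho,N}}$.

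Next I would check that the polynomials themselves converge. Using $a_j+1 = (1-|\brho_j|)t + 2|\bnu^{j+1}|$ and $\k_j+a_j+1 = (1-|\brho_{j-1}|)t + 2|\bnu^{j+1}| - 1$, both read off from \eqref{eq:aj}, the prefactor ratio in \eqref{eq:Hn-prod} satisfies $(\k_j+1)_{\nu_j}/(a_j+1)_{\nu_j}\to(\rho_j/(1-|\brho_j|))^{\nu_j}$, while in the ${}_3F_2$ of \eqref{eq:Hn-prod} the $k$-th summand carries $(\nu_j+\k_j+a_j+1)_k/(\k_j+1)_k\to((1-|\brho_{j-1}|)/\rho_j)^k$, so that each factor $\sQ_{\nu_j}$ collapses to the ${}_2F_1$ appearing in \eqref{eq:KrawK}. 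Multiplying over $j$ (the factors $(-N+|\xb_{j-1}|+|\bnu^{j+1}|)_{\nu_j}$ are independent of $t$ and common to both sides) yields
$$\lim_{t\to\infty}\sH_\nu(\cdot;\k,N) = C_\nu\,\sK_\nu(\cdot;\rho,N), \qquad C_\nu := (-1)^{|\nu|}\prod_{j=1}^{d}\Big(\frac{\rho_j}{1-|\brho_j|}\Big)^{\nu_j}\neq 0.$$

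Since $\ZZ_N^{d+1}$ is finite, the limits of the weight and of the polynomials combine to give, for fixed $\nu,\mu$, the convergence $\la\sH_\nu,\sH_\mu\ra_{\sH_{\k,N}}\to C_\nu C_\mu\,\la\sK_\nu,\sK_\mu\ra_{\sK_{\rho,N}}$. The left side equals $\delta_{\nu,\mu}\,\sB_\nu(\k,N)$ by Proposition~\ref{prop:Hahn1}, so mutual orthogonality of $\{\sK_\nu\}$ is immediate and $\sC_\nu = \lim_{t\to\infty}\sB_\nu(\k,N)/C_\nu^2$. Orthogonality to every polynomial of lower degree passes to the limit in the same way (the inner product against a fixed $q$ converges, and $C_\nu\neq0$), and because $C_\nu\neq 0$ the degree of $\sK_\nu$ is exactly $|\nu|$; counting the $\binom{n+d-1}{n}=\dim\CV_n^d$ mutually orthogonal members then yields the basis claim.

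The one genuinely delicate point is the final norm identity. Taking the limit of $\sB_\nu$ in Proposition~\ref{prop:Hahn1} with $|\k|+d+1=t$, the prefactor $(t)_{N+|\nu|}/[(t)_N(t)_{2|\nu|}]\sim t^{-|\nu|}$ exactly cancels the $t^{|\nu|}$ produced by the $j$-product, leaving $\lim_{t\to\infty}\sB_\nu = \frac{(-1)^{|\nu|}}{(-N)_{|\nu|}}\prod_{j=1}^d \nu_j!\,(1-|\brho_{j-1}|)^{\nu_j}\rho_j^{\nu_j}(1-|\brho_j|)^{-\nu_j}$. Dividing by $C_\nu^2 = \prod_{j=1}^d(\rho_j/(1-|\brho_j|))^{2\nu_j}$ and using the reindexing $\prod_{j=1}^d(1-|\brho_{j-1}|)^{\nu_j}(1-|\brho_j|)^{\nu_j}=\prod_{j=1}^d(1-|\brho_j|)^{\nu_j+\nu_{j+1}}$ (valid since $\nu_{d+1}=0$ and $|\brho_0|=0$) reproduces the stated $\sC_\nu(\rho,N)$. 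All remaining steps are routine asymptotics; the care lies entirely in tracking the $\rho_j$ versus $(1-|\brho_j|)$ factors through the squared constant $C_\nu^2$. As an alternative that avoids the Hahn machinery, one could prove both assertions directly by induction on $d$, peeling off the variable $x_d$ and invoking the one-variable Krawtchouk orthogonality at each stage; this is elementary but more laborious.
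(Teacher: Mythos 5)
Your proposal is correct and follows essentially the route the paper itself points to: the paper gives no self-contained proof of this proposition (it cites \cite{IX07}), but immediately afterwards it records the limit relation \eqref{Hahn-Kraw}, $\lim_{t\to\infty}\sH_\nu(x;t(\rho,1-|\rho|),N)=(-1)^{|\nu|}\prod_j(\rho_j/(1-|\brho_j|))^{\nu_j}\,\sK_\nu(x;\rho,N)$, which is exactly the engine of your argument. Your substitution $\k_i+1=\rho_i t$ differs from the paper's $\k=t(\rho,1-|\rho|)$ only by a shift that is asymptotically irrelevant (and makes $|\k|+d+1=t$ exact, a nice convenience); your limits of the weight, of the ${}_3F_2$ factors in \eqref{eq:Hn-prod}, and of the norm $\sB_\nu(\k,N)$ all check out, including the reindexing $\prod_j(1-|\brho_{j-1}|)^{\nu_j}(1-|\brho_j|)^{\nu_j}=\prod_j(1-|\brho_j|)^{\nu_j+\nu_{j+1}}$ that produces the stated $\sC_\nu$. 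The only statement needing a patch is the parenthetical claim that $C_\nu\neq 0$ forces $\deg\sK_\nu=|\nu|$: degrees can drop when passing to a limit of polynomials, so convergence of $\sH_\nu$ to $C_\nu\sK_\nu$ with $C_\nu\neq0$ only yields $\deg\sK_\nu\le|\nu|$. The fix is already contained in your own proof: $\sK_\nu$ is orthogonal to every polynomial of degree less than $|\nu|$ and has norm $\sC_\nu>0$, so it cannot have degree less than $|\nu|$ (it would then be orthogonal to itself); with this, mutual orthogonality and the count $\binom{n+d-1}{n}=\dim\CV_n^d$ complete the basis claim exactly as you say.
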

Note that when $d=1$, we obtain the classical Krawtchouk polynomials $\sK_n(x; p, N)$
of one variable, defined by
\begin{equation*}
  \sK_n(x; p, N) := {}_2 F_1 \left( \begin{matrix} -n, -x\\
       -N \end{matrix}; \frac{1}{p}\right), \qquad n = 0, 1, \ldots, N.    
\end{equation*}
These polynomials satisfy the following discrete orthogonality relation on $\{0,1,\ldots,N\}$ 
\begin{equation}  \label{eq:KrawKd=1}
 N! \sum_{x=0}^N \sK_n(x; p, N) \sK_m(x;p,N) \frac{p^x (1-p)^{N-x}}{x!(N-x)!} = 
     \frac{n! (N-n)! (1-p)^n}{N! p^n}. 
\end{equation}

The Krawtchouk polynomials in \eqref{eq:KrawK} can be obtained as limits of the Hahn polynomials in 
\eqref{eq:Hn-prod}. More precisely, setting $\k = t (\rho, 1-|\rho|)$, we have (see \cite{IX07})
\begin{align} \label{Hahn-Kraw}
  \lim_{t \to \infty} \sH_\nu(x; t(\rho, 1-|\rho|), N)   = (-1)^{|\nu|}\prod_{j=1}^{d}\frac{\rho_j^{\nu_j}} {(1- |\brho_j|)^{\nu_j}}\,\sK_\nu(x; \rho, N).
\end{align}
We note, however, that the polynomials in  \eqref{eq:KrawK}  differ by an unessential factor from the ones in \cite{IX07}. With the current normalization, the Krawtchouk polynomials polynomials satisfy a simple duality relation. Indeed, if we define dual indices $\nut$, variables $\xt$, and parameters $\rt$ by
\begin{equation}
\begin{alignedat}{2}
\xt_j&=\nu_{d+1-j} && \text{   for } 
                                 j=1,\dots, d,\label{eq:dualK}\\
\nut_j&=x_{d+1-j} &&\text{   for } j=1,\dots, d,  \\
\rt_j&=\frac{\rho_{d+1-j}(1-|\rho|)}{(1-|\brho_{d+1-j}|)(1-|\brho_{d-j}|)} &&\text{   for }j=1,\dots,d, 
\end{alignedat}
\end{equation}
then the following proposition holds (see \cite[Section 5.4]{GI}).
\begin{prop}\label{prop:dualityK}
The map $(x,\nu,\rho)\to(\xt,\nut,\rt)$ is an involution. Moreover, the Krawtchouk polynomials satisfy the following duality relation
\begin{equation}\label{eq:dualityK}
\sK_\nu(x;\rho, N)=\sK_{\nut}(\xt;\rt, N).
\end{equation}
\end{prop}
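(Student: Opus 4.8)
The plan is to prove the duality \eqref{eq:dualityK} by a direct, factor-by-factor comparison of the explicit product formula \eqref{eq:KrawK} for the two sides, exploiting the $\nu_j\leftrightarrow x_j$ symmetry of the Gauss series
$$
{}_2F_1\left(\begin{matrix} -\nu_j, -x_j \\ c\end{matrix}; z\right) = \sum_{i\ge 0}\frac{(-\nu_j)_i(-x_j)_i}{(c)_i\, i!}\,z^i .
$$
The guiding idea is that, because the dual map reverses the order of the factors while swapping the roles of $\nu$ and $x$, the $j$-th factor of $\sK_\nu(x;\rho,N)$ should coincide with the $(d+1-j)$-th factor of $\sK_{\nut}(\xt;\rt,N)$. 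Throughout I would introduce $s_m:=1-|\brho_m|$ (so $s_0=1$, $s_d=1-|\rho|$, $\rho_m=s_{m-1}-s_m$), which linearizes the parameter $\rt$.

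First I would match the lower hypergeometric parameters. Using $\xt_i=\nu_{d+1-i}$ and $\nut_i=x_{d+1-i}$ and reindexing the partial sums, one gets
$$
\xt_1+\dots+\xt_{k-1}=|\bnu^{j+1}| \quad\text{and}\quad \nut_{k+1}+\dots+\nut_d=|\xb_{j-1}| \qquad (k=d+1-j),
$$
so that the lower parameter of the $k$-th dual factor equals $-N+|\xb_{j-1}|+|\bnu^{j+1}|$, precisely the lower parameter $b_j:=-N+|\xb_{j-1}|+|\bnu^{j+1}|$ of the $j$-th factor of \eqref{eq:KrawK}. Next I would match the arguments. From \eqref{eq:dualK} a short computation gives the telescoping form $\rt_i=s_d\bigl(\tfrac1{s_{d+1-i}}-\tfrac1{s_{d-i}}\bigr)$, whence $\rt_1+\dots+\rt_{k-1}=1-s_d/s_{d-k+1}$ and therefore
$$
\frac{1-(\rt_1+\dots+\rt_{k-1})}{\rt_k}=\frac{s_{d-k}}{\rho_{d+1-k}}=\frac{1-|\brho_{j-1}|}{\rho_j} \qquad (k=d+1-j).
$$
Since the dual $k$-th factor has upper parameters $-\nut_k,-\xt_k=-x_j,-\nu_j$, the ${}_2F_1$ symmetry then identifies it with the $j$-th factor of $\sK_\nu(x;\rho,N)$, so the two products of hypergeometric factors agree up to reordering.

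It remains to reconcile the non-hypergeometric prefactors, and this is the step I expect to be the main obstacle, since they do \emph{not} match factor by factor: the prefactor of the left side is $\frac{1}{(-N)_{|\nu|}}\prod_j (b_j)_{\nu_j}$, while the computation above identifies the dual prefactor as $\frac{1}{(-N)_{|x|}}\prod_j (b_j)_{x_j}$ — the bases $b_j$ agree but the Pochhammer lengths are $\nu_j$ versus $x_j$, and the total degrees $|\nu|$ and $|x|$ differ. To prove equality I would convert each Pochhammer of a negative integer into factorials, setting $u_j:=N-|\xb_{j-1}|-|\bnu^j|$ and $v_j:=N-|\xb_{j-1}|-|\bnu^{j+1}|$, so that $(b_j)_{\nu_j}=(-1)^{\nu_j}v_j!/u_j!$ and $(-N)_{|\nu|}=(-1)^{|\nu|}N!/(N-|\nu|)!$. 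Using the key relations $v_j=u_j+\nu_j=u_{j+1}+x_j$ together with $u_1=N-|\nu|$ and $u_{d+1}=N-|x|$, the signs cancel and both prefactors collapse, by telescoping of the factorials, to the common value $\bigl(\prod_{j=1}^d v_j!\bigr)\big/\bigl(N!\prod_{j=2}^d u_j!\bigr)$; hence they coincide.

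Finally, for the involution claim I would reuse the same substitution: the identity $1-(\rt_1+\dots+\rt_m)=s_d/s_{d-m}$ says that, with $\tilde s_m:=1-(\rt_1+\dots+\rt_m)$, the dual parameters satisfy $\tilde s_m=s_d/s_{d-m}$, and applying this formula twice yields $\tilde{\tilde s}_m=s_m$, so $\tilde{\tilde\rho}=\rho$; the reversals $\xt_j=\nu_{d+1-j}$ and $\nut_j=x_{d+1-j}$ are visibly involutive. The only genuinely computational parts of the argument are the two telescoping identities for $\rt$ and the factorial cancellation for the prefactor, all of which become elementary once the auxiliary variables $s_m$, $u_j$, $v_j$ are in place.
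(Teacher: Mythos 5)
Your proposal takes a genuinely different route from the paper, which in fact contains no argument for this proposition at all: it simply cites \cite[Section 5.4]{GI}. A self-contained verification is therefore valuable, and the core of yours checks out. I verified the three computational pillars: (i) with $s_m=1-|\brho_m|$ one has $\rt_i=s_d\bigl(1/s_{d+1-i}-1/s_{d-i}\bigr)$, the partial sums telescope, and the $k$-th dual factor ($k=d+1-j$) has the same lower parameter $b_j=-N+|\xb_{j-1}|+|\bnu^{j+1}|$, the same argument $(1-|\brho_{j-1}|)/\rho_j$, and upper parameters $\{-x_j,-\nu_j\}$, so it matches the $j$-th original factor by the symmetry of the Gauss series; (ii) with $u_j=N-|\xb_{j-1}|-|\bnu^{j}|$ and $v_j=N-|\xb_{j-1}|-|\bnu^{j+1}|$ both prefactors do collapse to $\prod_{j=1}^d v_j!\big/\bigl(N!\prod_{j=2}^d u_j!\bigr)$; (iii) the involution follows from $\tilde s_m=s_d/s_{d-m}$.

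There is, however, a gap you must close: steps (i)--(ii) silently assume $u_j\ge 0$ for every $j=1,\dots,d+1$, and this fails at admissible lattice points. For $d=2$, $N=2$, $x=(2,0)$, $\nu=(0,2)$ one has $u_2=-2$ even though $|x|\le N$ and $|\nu|\le N$; such points cannot be discarded, since the duality is applied on the whole lattice in Corollary~\ref{cor:dualorthogonalityK}. When some $u_j<0$, the conversion $(b_j)_{\nu_j}=(-1)^{\nu_j}v_j!/u_j!$ is meaningless, and worse, the individual ${}_2F_1$'s can have vanishing lower-parameter Pochhammers inside their terminating range, so you cannot even factor the ``common'' hypergeometric series out of the two sides; only the regularized products
$G_j=\sum_{i}\tfrac{(-\nu_j)_i(-x_j)_i}{i!}(b_j+i)_{\nu_j-i}\,z_j^{i}$
(the interpretation under which $\sK_\nu$ is a polynomial) are well defined there. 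The cleanest repair is to run your telescoping termwise rather than after splitting off the prefactor: for each multi-index $i$ and all sufficiently large $N$, the same factorial manipulation with $v_j$ replaced by $v_j-i_j$ gives
\begin{equation*}
\frac{\prod_{j=1}^d(b_j+i_j)_{\nu_j-i_j}}{(-N)_{|\nu|}}
=\frac{\prod_{j=1}^d(b_j+i_j)_{x_j-i_j}}{(-N)_{|x|}}
=(-1)^{|i|}\,\frac{\prod_{j=1}^d(v_j-i_j)!}{N!\,\prod_{j=2}^d u_j!},
\end{equation*}
and since the two extreme members are rational functions of $N$ (with $x,\nu,i$ fixed), the identity persists at the given $N$, which is not a pole because $|\nu|\le N$ and $|x|\le N$; summing against the weights $\prod_j\tfrac{(-\nu_j)_{i_j}(-x_j)_{i_j}}{i_j!}z_j^{i_j}$, which are symmetric in $\nu_j\leftrightarrow x_j$, then yields \eqref{eq:dualityK} in all cases. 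Alternatively, you can check that whenever some $u_{j_0}<0$ both sides vanish: taking $j_0$ maximal with $u_{j_0}<0$ forces $u_{j_0+1}\ge 0$, which annihilates every term of $G_{j_0}$, and since the dual sequence is the reversal $\tilde u_k=u_{d+2-k}$, the dual product vanishes for the same reason.
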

From the explicit formula in \eqref{eq:dualK} for the dual parameters and by using partial fractions it is easy to see that $|\rt|=|\rho|$. Combining this with the formulas for the norm and the weight of the Krawtchouk polynomials, one check that 
\begin{equation}
\sC_\nu (\rho, N)\,\sK_{\rt,N}(\xt)=(1-|\rho|)^N.
\end{equation}
As an easy consequence of the last formula and the duality in Proposition~\ref{prop:dualityK} we obtain the following corollary.
\begin{cor}\label{cor:dualorthogonalityK}
The orthonormal Krawtchouk polynomials satisfy
\begin{equation}\label{eq:dualorthogonalityK}
\sqrt{\sK_{\rho,N}(x)}\, \wh \sK_{\nu}(x; \rho, N)=\sqrt{\sK_{\rt,N}(\xt)}\, \wh \sK_{\nut}(\xt; \rt, N).
\end{equation}
\end{cor}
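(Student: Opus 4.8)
The plan is to substitute the definition of the orthonormal Krawtchouk polynomials, $\wh\sK_\nu(x;\rho,N)=\sC_\nu(\rho,N)^{-1/2}\sK_\nu(x;\rho,N)$, into both sides of \eqref{eq:dualorthogonalityK}. By the duality relation \eqref{eq:dualityK} of Proposition~\ref{prop:dualityK}, the unnormalized polynomial factors satisfy $\sK_\nu(x;\rho,N)=\sK_{\nut}(\xt;\rt,N)$, so they cancel. The identity \eqref{eq:dualorthogonalityK} is therefore equivalent to the purely scalar statement
\[
   \frac{\sK_{\rho,N}(x)}{\sC_\nu(\rho,N)}=\frac{\sK_{\rt,N}(\xt)}{\sC_{\nut}(\rt,N)},
\]
relating the two weights and the two squared norms evaluated at dual data.

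To establish this scalar identity I would use the displayed relation $\sC_\nu(\rho,N)\,\sK_{\rt,N}(\xt)=(1-|\rho|)^N$ recorded just before the corollary. Because the map $(x,\nu,\rho)\mapsto(\xt,\nut,\rt)$ is an involution, applying this same relation to the dual data $(\xt,\nut,\rt)$ — whose dual is again $(x,\nu,\rho)$ — gives $\sC_{\nut}(\rt,N)\,\sK_{\rho,N}(x)=(1-|\rt|)^N$. Since $|\rt|=|\rho|$, the right-hand side is again $(1-|\rho|)^N$.

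Solving the two relations for $\sK_{\rt,N}(\xt)$ and $\sK_{\rho,N}(x)$ respectively and substituting, both sides of the scalar identity reduce to $(1-|\rho|)^N/\big(\sC_\nu(\rho,N)\,\sC_{\nut}(\rt,N)\big)$, which proves it and hence \eqref{eq:dualorthogonalityK}. There is essentially no hard step here: all the analytic content is already packaged in the duality of Proposition~\ref{prop:dualityK} and in the constancy relation $\sC_\nu(\rho,N)\,\sK_{\rt,N}(\xt)=(1-|\rho|)^N$. The only point demanding care is the involution bookkeeping — verifying that feeding the dual data back into the constancy relation is legitimate and that $|\rt|=|\rho|$ — but both facts are already in hand, so the argument is a short substitution.
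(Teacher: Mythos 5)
Your proof is correct and follows essentially the same route as the paper, which obtains the corollary precisely as ``an easy consequence'' of the duality $\sK_\nu(x;\rho,N)=\sK_{\nut}(\xt;\rt,N)$ and the constancy relation $\sC_\nu(\rho,N)\,\sK_{\rt,N}(\xt)=(1-|\rho|)^N$. Your explicit bookkeeping---applying that relation a second time at the dual data $(\xt,\nut,\rt)$ via the involution property and $|\rt|=|\rho|$---is exactly the detail the paper leaves to the reader, so nothing is missing.
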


Let $\tau$ be a permutation in $S_{d+1}$. We denote by $\tau \rho$ the first $d$ components of $\tau (\rho, 1-|\rho|)$. 
Then $\{\sK_\nu (\tau\{\cdot\}; \tau \rho, N): |\nu| = n\}$ is also 
an orthogonal basis of  $\CV_n^d(\sK_{\rho,N})$. Hence, we can consider the connection coefficients, 
$\ks_{\mu,\nu}^\tau(\k)$, of the Krawtchouk polynomials  
\begin{equation} \label{eq:cc-Krawt}
 \sK_\nu (\tau \a; \tau \rho,N) = \sum_{|\mu| =n} \ks_{\nu,\mu}^\tau (\rho) \sK_\mu (\a; \rho, N). 
\end{equation}
By the orthogonality, it follows immediately that 
$$
  \ks_{\nu,\mu}^\tau(\rho) = \frac{1}{\sC_\mu(\rho,N)} \la   \sK_\nu (\tau\{\cdot\}; \tau \rho,N), \sK_\mu (\cdot; \rho, N) \ra_{\sK_{\rho,N}}.
$$
Let $\wh \sK_\mu (\cdot;\rho,N) = [\sC_{\mu}(\rho,N)]^{-1/2} \sK_\mu (\cdot; \rho,N)$. Then $\{\wh \sK_\mu (\cdot; \rho,N):|\mu|=n\}$ is an orthonormal basis of $\CV_n^d(\sK_{\rho, N})$ and we have
\begin{equation}\label{eq:hat-K}
  \wh \sK_\nu (\tau \a;\tau \rho,N) = \sum_{|\mu| = n} \wh \ks_{\nu,\mu}^\tau (\rho) \wh \sK_\mu (\a; \rho,N), \quad
    \wh \ks_{\nu,\mu}^\tau (\rho) := \sqrt{\frac{\sC_\mu(\rho,N)}{\sC_\nu(\tau \rho,N)}} \ks_{\nu,\mu}^\tau (\rho).
\end{equation}
These connection coefficients can be derived from those for the Hahn polynomials. Indeed, the 
limit relation \eqref{Hahn-Kraw} leads to the following lemma. 

\begin{lem}
For $\tau \in S_{d+1}$ and $|\nu| = |\mu| =n$ we have
\begin{equation} \label{eq:cc-h-k}
       \wh  \ks_{\nu,\mu}^{\tau}(\rho) =  \lim_{t \to \infty}  \wh  \hs_{\nu,\mu}^{\tau}(t(\rho, 1-|\rho|)). 
\end{equation}
In particular, the connection coefficients $\ks_{\nu,\mu}^{\tau}(\rho)$ of the Krawtchouk polynomials are 
independent of $N$. 
\end{lem}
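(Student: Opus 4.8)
The plan is to write both families of normalized connection coefficients as inner products against the respective orthonormal bases and then pass to the limit $t\to\infty$ inside these finite sums. First I would record that, since $|\rho|+(1-|\rho|)=1$, the choice $\k=t(\rho,1-|\rho|)$ gives $\tau\k=t(\tau\rho,1-|\tau\rho|)$, so that the two Hahn families in \eqref{eq:cc-Hahn} are exactly the two Krawtchouk degenerations governed by \eqref{Hahn-Kraw}. Because $\{\wh\sH_\mu(\cdot;\k,N)\}$ is orthonormal, the defining relation \eqref{eq:cc-Hahn} yields
$$
\wh\hs_{\nu,\mu}^{\tau}(\k)=\la \wh\sH_\nu(\tau\{\cdot\};\tau\k,N),\,\wh\sH_\mu(\cdot;\k,N)\ra_{\sH_{\k,N}},
$$
and likewise $\wh\ks_{\nu,\mu}^{\tau}(\rho)$ equals the analogous Krawtchouk inner product coming from \eqref{eq:cc-Krawt}.

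Next I would establish the three convergences needed to take the limit termwise on the finite index set $\ZZ_N^{d+1}$. First, comparing the Hahn weight \eqref{eq:HK-homo} with its normalization in \eqref{eq:sum-homo} to the Krawtchouk weight \eqref{eq:weightK}, one checks that for $\k=t(\rho,1-|\rho|)$ the scaled weight $\frac{N!}{(t+d+1)_N}\frac{(\k+\one)_\a}{\a!}$ converges pointwise to the Krawtchouk weight $\sK_{\rho,N}$ written in homogeneous coordinates; hence $\la\cdot,\cdot\ra_{\sH_{\k,N}}\to\la\cdot,\cdot\ra_{\sK_{\rho,N}}$ as bilinear forms on functions on $\ZZ_N^{d+1}$. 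Second, by \eqref{Hahn-Kraw}, $\sH_\nu(\cdot;\k,N)\to(-1)^{|\nu|}C_\nu\,\sK_\nu(\cdot;\rho,N)$ pointwise, with $C_\nu:=\prod_{j=1}^{d}\rho_j^{\nu_j}/(1-|\brho_j|)^{\nu_j}>0$. Third, combining these two facts, the norm satisfies $\sB_\nu(\k,N)=\la\sH_\nu,\sH_\nu\ra_{\sH_{\k,N}}\to C_\nu^2\,\sC_\nu(\rho,N)$, so $\sqrt{\sB_\nu(\k,N)}\to C_\nu\sqrt{\sC_\nu(\rho,N)}$ and therefore
$$
\wh\sH_\nu(\cdot;\k,N)\to(-1)^{|\nu|}\wh\sK_\nu(\cdot;\rho,N),
$$
the factor $C_\nu$ cancelling between polynomial and norm. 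The same statement holds for the permuted family with $\rho$ replaced by $\tau\rho$.

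Finally, since the inner product is a finite sum, I would pass the limit through it to get
$$
\lim_{t\to\infty}\wh\hs_{\nu,\mu}^{\tau}(t(\rho,1-|\rho|))=(-1)^{|\nu|+|\mu|}\la\wh\sK_\nu(\tau\{\cdot\};\tau\rho,N),\,\wh\sK_\mu(\cdot;\rho,N)\ra_{\sK_{\rho,N}}=\wh\ks_{\nu,\mu}^{\tau}(\rho),
$$
where $(-1)^{|\nu|+|\mu|}=1$ since $|\nu|=|\mu|=n$; this proves \eqref{eq:cc-h-k}. For the $N$-independence, I would note that by \eqref{eq:con-coef1} one has $\wh\hs_{\nu,\mu}^{\tau}(\k)=\wh c_{\nu,\mu}^{\tau}(\k)$, which involves no $N$, so the limit $\wh\ks_{\nu,\mu}^{\tau}(\rho)$ is $N$-independent; and from the explicit formula for $\sC_\nu$ in Proposition~\ref{prop:Kraw} the only $N$-dependence is through $(-N)_{|\nu|}$, whence the ratio $\sC_\nu(\tau\rho,N)/\sC_\mu(\rho,N)$ is $N$-independent once $|\nu|=|\mu|$. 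Thus $\ks_{\nu,\mu}^{\tau}(\rho)=\sqrt{\sC_\nu(\tau\rho,N)/\sC_\mu(\rho,N)}\,\wh\ks_{\nu,\mu}^{\tau}(\rho)$ does not depend on $N$. The step needing genuine care is the third convergence: one must verify that the limit of the Hahn norm equals exactly the Krawtchouk norm of the limiting polynomial, so that the normalizations and the sign $(-1)^{|\nu|}$ match and the prefactors $C_\nu$ cancel cleanly; this is precisely where the convergence of the inner product as a bilinear form is used.
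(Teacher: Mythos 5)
Your proposal is correct and follows exactly the route the paper indicates: the paper offers no written proof beyond the remark that the limit relation \eqref{Hahn-Kraw} "leads to the following lemma," and your argument is precisely that limit relation carried out in detail (pointwise convergence of the normalized Hahn weight to the Krawtchouk weight, of the Hahn polynomials to signed multiples of Krawtchouk polynomials, hence of norms and of the inner-product representation of the coefficients over the finite set $\ZZ_N^{d+1}$). Your verification that the prefactors $C_\nu$ cancel in the orthonormal polynomials, that the signs $(-1)^{|\nu|+|\mu|}$ disappear since $|\nu|=|\mu|=n$, and your two-step $N$-independence argument via \eqref{eq:con-coef1} and the ratio $\sC_\nu(\tau\rho,N)/\sC_\mu(\rho,N)$ are all sound.
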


By a limiting argument from Theorem~\ref{thm:C(cyclic)}, we can obtain a closed formula for the connection coefficients in the case of a cyclic permutation.

\begin{thm}
Let $\rho_1,\ldots, \rho_d \in (0,1)$ and $|\rho| = \rho_1+\ldots + \rho_d <1$.  
For $\nu,\mu \in \NN_0^d$ and $|\nu| = |\mu|=n$, 
\begin{align}\label{eq:CK1}
  \wh \ks_{\nu,\mu}^{(12\dots d)}(\rho) = & \ (-1)^{n+\nu_d} \sqrt{\sK_{\wh \rho,n}(\nu_1,\ldots, \nu_{d-1})}\\
       & \times   \wh \sK_{ \mu_2,\mu_3,\ldots,\mu_d} ((\nu_1,\ldots, \nu_{d-1}); \wh \rho,n), \notag
\end{align}
where $\wh \rho = (\wh \rho_1,\ldots, \wh \rho_{d-1})$ with 
\begin{equation}\label{eq:hat-rhoK}
  \wh \rho_j = \frac{\rho_1\rho_{j+1}}{(1-\rho_1)(1+\rho_1 - |\brho_{j+1}|)(1+\rho_1-|\brho_j|)}. 
\end{equation}
Furthermore, we also have
\begin{align}\label{eq:CK2}
  \wh \ks_{\nu,\mu}^{(12\dots d)}(\rho) = & \ (-1)^{n+\nu_d}\sqrt{\sK_{\rt,n}(\mu_{d},\mu_{d-1}\ldots, \mu_{2})} \\
    & \times \wh \sK_{\nu_{d-1},\nu_{d-2},\ldots,\nu_{1}} ((\mu_{d},\mu_{d-1}\ldots, \mu_{2}); \rt,n), \notag
\end{align}
where $\rt = (\rt_1,\dots, \rt_{d-1})$ with 
\begin{equation}\label{eq:hat-rhoK2}
  \rt_j = \frac{\rho_1\rho_{d+1-j}(1-|\rho|)}{(1-|\brho_{d-j}|)(1- |\brho_{d+1-j}|)(1+\rho_1-|\rho|)}. 
\end{equation}
\end{thm}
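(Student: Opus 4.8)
The plan is to obtain both formulas as limits of the Jacobi (equivalently Hahn) results already established, using the Askey-scheme degeneration of the Racah polynomials to the Krawtchouk polynomials. By the preceding lemma, equation \eqref{eq:cc-h-k}, together with the identity $\wh \hs_{\nu,\mu}^{\tau}(\k) = \wh c_{\nu,\mu}^{\,\tau}(\k)$ from \eqref{eq:con-coef1}, we have
\begin{equation*}
   \wh \ks_{\nu,\mu}^{(12\dots d)}(\rho) = \lim_{t\to\infty} \wh c_{\nu,\mu}^{(12\dots d)}\big(t(\rho,1-|\rho|)\big).
\end{equation*}
Thus the whole problem reduces to inserting the closed form of \thmref{thm:C(cyclic)} and computing the $t\to\infty$ limit of the right-hand side under the substitution $\kappa_i = t\rho_i$ for $i\le d$ and $\kappa_{d+1}=t(1-|\rho|)$. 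I would prove \eqref{eq:CK1} from \eqref{eq:C(cyclic)} and then deduce \eqref{eq:CK2} from \eqref{eq:CK1} by the Krawtchouk self-duality, in exact parallel with how \eqref{eq:C(cyclic)2} follows from \eqref{eq:C(cyclic)}.

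For \eqref{eq:CK1} the key is the factorwise degeneration of the quantity $\sqrt{w_\sR(\hat\nu;\beta,n)}\,\wh\sR_{\hat\mu}(\hat\nu;\beta,n)$ appearing in \eqref{eq:C(cyclic)}. First I would substitute the scaling above into the parameters $\beta_j = \kappa_1 + |\bka^{d+2-j}|+j$; each then grows linearly in $t$, with $\beta_0 = t\rho_1$ and, for $j\ge 1$, leading coefficient $\rho_1+(1-|\rho|)+\sum_{i=d+2-j}^{d}\rho_i$. Using the elementary asymptotics $(at+b)_k \sim (at)^k$, each one-variable Racah ${}_4F_3$ factor in the product defining $\sR_{\hat\mu}$ collapses to a ${}_2F_1$, namely the corresponding factor of the multivariable Krawtchouk polynomial in \eqref{eq:KrawK}; tracking the surviving ratios of leading coefficients is what produces the parameter $\wh\rho_j$ recorded in \eqref{eq:hat-rhoK}. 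Here one must be attentive to the bookkeeping: the Racah variable $\hat\nu$, built from partial tail-sums of $\nu$, is converted into the individual coordinates $(\nu_1,\dots,\nu_{d-1})$ with a reversal of order, and correspondingly the index order reverses from $(\mu_d,\dots,\mu_2)$ to $(\mu_2,\dots,\mu_d)$.

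Rather than track weight and norm separately, I would compute the limit of the combined object $\sqrt{w_\sR(\hat\nu;\beta,n)}\,\wh\sR_{\hat\mu}(\hat\nu;\beta,n)$, the same expression that enters the Racah dual orthogonality of Corollary~\ref{cor:dualorthogonality}, and show it tends to $\sqrt{\sK_{\wh\rho,n}(\nu_1,\dots,\nu_{d-1})}\,\wh\sK_{\mu_2,\dots,\mu_d}((\nu_1,\dots,\nu_{d-1});\wh\rho,n)$. Concretely, this amounts to checking, via \eqref{eq:d-Racah-weight}, \eqref{eq:norm}, \eqref{eq:weightK} and the norm formula in Proposition~\ref{prop:Kraw}, that the Pochhammer factors of $w_\sR$ and of the reciprocal norm $r_{\hat\mu}(\beta,n)^{-1}$ recombine in the limit into the Krawtchouk weight $\sK_{\wh\rho,n}$ and its reciprocal norm. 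The sign $(-1)^{\mu_1+\nu_d}$ is untouched by the limit, since the normalizations in \eqref{eq:conn-c-coeff} and \eqref{eq:hat-K} are chosen precisely to match. The main obstacle, and the bulk of the work, is exactly this asymptotic analysis: verifying that the numerous $t$-dependent factors cancel to leave the clean rational expression \eqref{eq:hat-rhoK} and that no stray powers of $t$ survive. This is routine in spirit but heavy, and is most safely organized one Racah factor against one Krawtchouk factor at a time.

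Finally, once \eqref{eq:CK1} is in hand, \eqref{eq:CK2} follows by the Krawtchouk self-duality. The dual-orthogonality relation \eqref{eq:dualorthogonalityK} of Corollary~\ref{cor:dualorthogonalityK} converts $\sqrt{\sK_{\wh\rho,n}}\,\wh\sK_{\mu_2,\dots,\mu_d}$ into $\sqrt{\sK_{\rt,n}}\,\wh\sK_{\nu_{d-1},\dots,\nu_1}$ evaluated at the dual point, with dual indices, variables and parameters prescribed by the involution \eqref{eq:dualK} (in $d-1$ variables); feeding $\wh\rho$ of \eqref{eq:hat-rhoK} through that involution and simplifying by partial fractions yields the parameter $\rt_j$ of \eqref{eq:hat-rhoK2}. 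This is the exact Krawtchouk analogue of the passage from \eqref{eq:C(cyclic)} to \eqref{eq:C(cyclic)2} via Corollary~\ref{cor:dualorthogonality}, so no second limit computation is needed.
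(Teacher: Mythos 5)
Your proposal follows essentially the same route as the paper's own proof: both reduce \eqref{eq:CK1} to the limit $\wh \ks_{\nu,\mu}^{(12\dots d)}(\rho)=\lim_{t\to\infty}\wh c_{\nu,\mu}^{(12\dots d)}(t(\rho,1-|\rho|))$ via \eqref{eq:cc-h-k} and \eqref{eq:con-coef1}, degenerate the Racah expression of Theorem~\ref{thm:C(cyclic)} factor by factor into the Krawtchouk data of \eqref{eq:KrawK}, dispose of the leftover constants by the discrete orthogonality relations, and obtain \eqref{eq:CK2} from \eqref{eq:CK1} through Corollary~\ref{cor:dualorthogonalityK} with no second limit. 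The one detail you leave implicit, which is where the paper does its main bookkeeping, is that the ${}_2F_1$ produced by the limit has argument $\frac{(1-|\brho_{j}|)(1+\rho_1-|\brho_{j+1}|)}{(1-|\brho_{j+1}|)(1+\rho_1-|\brho_{j}|)}$ and must be converted by the Pfaff transformation before it matches the Krawtchouk factor and exhibits the parameter $\wh\rho_j$ of \eqref{eq:hat-rhoK}.
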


\begin{proof}
Setting $\k = t(\rho_1,\ldots, \rho_d, 1-|\rho|)$ in equation \eqref{eq:C(cyclic)} in Theorem~\ref{thm:C(cyclic)}, it is easy to check that
$$
   \b_0 = t \rho_1 \quad \hbox{and} \quad     \b_j= t(1+\rho_1 - |\brho_{d+1-j}|) +j, \quad j =1,2,\ldots d. 
$$
Taking the limit $t\to\infty$ in $\wh \sR_{\wh \mu} (\wh \nu; \beta,n)$ and ignoring positive factors independent of $\nu$ we obtain 
\begin{align*}
   &\prod_{j=1}^{d-1} (|\bmu^{d-j+2}| - |\bnu^{d-j}|)_{\mu_{d+1-j} }\\
   &\quad 
      \times{}_2F_1\left(\begin{matrix} -\mu_{d+1-j},|\bmu^{d-j+2}| - |\bnu^{d+1-j}| \\
        |\bmu^{d-j+2}| - |\bnu^{d-j}|     \end{matrix}; \frac{(1-|\brho_{d-j}|)(1+\rho_1-|\brho_{d+1-j}|)}{(1-|\brho_{d+1-j}|)(1+\rho_1-|\brho_{d-j}|)} \right) .
\end{align*}
Changing the product index from $j$ to $d-j$, the above expression becomes 
\begin{align*}
   &\prod_{j=1}^{d-1} (|\bmu^{j+2}| - |\bnu^{j}|)_{\mu_{j+1} } 
       {}_2F_1\left(\begin{matrix} -\mu_{j+1},|\bmu^{j+2}| - |\bnu^{j+1}| \\
        |\bmu^{j+2}| - |\bnu^{j}|     \end{matrix}; \frac{(1-|\brho_{j}|)(1+\rho_1-|\brho_{j+1}|)}{(1-|\brho_{j+1}|)(1+\rho_1-|\brho_{j}|)} \right) .
\end{align*}
Applying now the Pfaff identity (see \cite[Theorem 2.2.5]{AAR})
\begin{align*}
{}_2F_1\left(\begin{matrix} a, b \\
        c    \end{matrix}; x \right)=(1-x)^{-a} {}_2F_1\left(\begin{matrix} a, c-b \\
        c    \end{matrix}; \frac{x}{x-1} \right),
\end{align*}
with $a=-\mu_{j+1}$, $b=|\bmu^{j+2}| - |\bnu^{j+1}|$, $c=|\bmu^{j+2}| - |\bnu^{j}| =-n+|\bmu^{j+2}| + |\bnu_{j-1}|$ and ignoring again positive factors independent of $\nu$ we obtain 
\begin{align}
   &(-1)^{|\bmu^{2}|}\,\prod_{j=1}^{d-1} (-n+|\bmu^{j+2}| + |\bnu_{j-1}|)_{\mu_{j+1} }\label{eq:Kr2}\\
   &\quad 
      \times {}_2F_1\left(\begin{matrix} -\mu_{j+1},- \nu_{j} \\
        -n+|\bmu^{j+2}| + |\bnu_{j-1}| \end{matrix}; \frac{(1-|\brho_{j}|)(1+\rho_1-|\brho_{j+1}|)}{\rho_1\rho_{j+1}}\right) .\notag
\end{align}
The parameters $\wh \rho_j$ defined in \eqref{eq:hat-rhoK} satisfy 
$$ \frac{1- |\wh \brho_{j-1}|}{\wh \rho_j}=\frac{(1-|\brho_{j}|)(1+\rho_1-|\brho_{j+1}|)}{\rho_1\rho_{j+1}},$$
and comparing the last two formulas with  \eqref{eq:KrawK} shows that, up to  a positive factor independent of $\nu$, equation \eqref{eq:Kr2} coincides with the orthonormal polynomial $ \wh \sK_{ \mu_2,\mu_3,\ldots,\mu_d} ((\nu_1,\ldots, \nu_{d-1}); \wh \rho,n)$. 
Furthermore, taking the limit in the weight function 
$w_\sR(\wh \nu, \beta,n)$ in \eqref{eq:C(cyclic)}  and using the formula \eqref{eq:d-Racah-weight}, we obtain
$$
  \frac{(1-|\rho|)^{\nu_d} (1+\rho_1-|\rho|)^{\nu_d}} {\nu_d! \rho_1^{\nu_d}} 
     \prod_{j=1}^{d-1}  \frac{\rho_{j+1}^{\nu_j}} {\nu_j!} \frac{(1+\rho_1-|\brho_j|)^{ |\bnu^j|+ |\bnu^{j+1}|}}
         {(1+\rho_1-|\brho_{j+1}|)^{ |\bnu^j|+ |\bnu^{j+1}|}},
$$
which can be simplified to 
$$
     \prod_{j=1}^{d-1}  \frac{\rho_{j+1}^{\nu_j}} {\nu_j!}  \frac{(1-|\rho|)^{\nu_d} } {\nu_d! } 
     \frac{1} {\rho_1^{\nu_d} (1+\rho_1-|\brho_{j+1}|)^{\nu_j+\nu_{j+1}}}. 
$$
Up to a factor independent of $\nu$, this is precisely the weight function $\sK_{\wh \rho, n}(\nu_1,\ldots,\nu_{d-1})$
as can be directly verified using \eqref{eq:hat-rhoK} and 
$$
  1- |\wh \rho| =  \frac{1- |\rho|}{(1-\rho_1)(1+\rho_1 -|\brho|)}.
$$
The latter can be directly verified by writing $\wh \rho_j$ in partial fraction. Positive multiplicative factors independent of the variable
$\nu$ are not accounted for in the above computations, but they are unessential since an analogue of \eqref{eq:discreteOP} 
for $\wh \ks_{\nu,\mu}^\tau$ completes the proof of \eqref{eq:CK1}. Applying now Corollary~\ref{cor:dualorthogonalityK} we obtain equation \eqref{eq:CK2}.
\end{proof}

\section{Orthogonal polynomials on the unit ball and the unit sphere}\label{se9}
\setcounter{equation}{0}

For the unit ball $\BB^d = \{x: \|x\|\le 1\}$, where $\|x\|$ denotes the Euclidean norm of $x$ in $\RR^d$, the classical weight function is given by
$$
  W_\mu^\BB(x) = (1-\|x\|^2)^{\mu}, \quad \mu > -1, \quad x \in \BB^d. 
$$
A more general family of weight functions on the ball is defined by
$$
  W_\k^\BB(x) = \prod_{i=1}^d |x_i|^{2 \k_i+1} (1-\|x\|^2)^{\k_{d+1}}, \quad \k_i > -1, \quad x \in \BB^d. 
$$
One orthogonal basis for $\CV_n^d(W_\k^\BB)$ is given by $\{P_\a^n(W_\k^\BB; \cdot):|\a| =n\}$ with
\begin{equation} \label{eq:basisBall}
 P_\a^n(W_{\k}^\BB;x) := \prod_{j=1}^d (1-\|\xb_{j-1}\|^2)^{\a_j/2} C_{\a_j}^{(\l_j+\f12,\k_j+\f12)}\left( \frac{x_j}{\sqrt{1-\|\xb_{j-1}\|^2}} \right),
\end{equation}
where $\l_j = \l_j(\k,\a):= |\bal^{j+1}|+|\bka^{j+1}|+d-j$ and $C_n^{(\l,\mu)}$ denotes the generalized Gegenbauer polynomials that are orthogonal with respect to 
$|x|^{2\mu}(1-x^2)^{\l-\f12}$ on $[-1,1]$; see \cite[p.\ 266]{DX}. It should be noted that the parameters in our weight function differ from those in 
\cite{DX} by an additive $1/2$. 

The orthogonal polynomials on the unit ball are known to be related to orthogonal polynomials on the simplex. In fact, if 
$\{P_\nu^\k: |\nu|=n\}$ is a basis of $\CV_n^d(W_{\k})$ on the simplex, it is known \cite[Section 4.4]{DX} that 
$\{P_\nu^\k(x_1^2,\ldots,x_d^2): |\nu| =n\}$ is a basis for the subspace of $\CV_n^d(W_\k^\BB)$ that consists of 
orthogonal polynomials that are invariant under sign changes. The proposition below completes the relation in the other direction.

\begin{defn}\label{defn:Q-ball}
Let $\{P_\nu^\k: |\nu|=n\}$ be a basis of $\CV_n^d(W_{\k})$ on the simplex.
Let $\ve \in \{0,1\}^d$ and $\ve' = (\ve,0)$. Assume $(n- |\ve|)/2 \in \NN_0$. For $\nu \in \NN_0^d$ with $|\nu| =(n- |\ve|)/2$, define 
$$
    Q_{\nu,\ve}^\k (x) := x^\ve P_\nu^{\k+\ve'}(x_1^2,\dots,x_d^2).
$$ 
\end{defn}
It is evident that $Q_{\nu,\ve}^\k$ is a polynomial of degree exactly $n$. To simplify the notation, we write below $\ve$ instead of $\ve'$, i.e. for $\ve\in\{0,1\}^d$ when we need to work in $\RR^{d+1}$ we think of $\ve$ as $(\ve,0)$.

\begin{prop} \label{prop:ball}
If $\{P_\nu^\k: |\nu|=n\}$ is an orthogonal basis of $\CV_n^d(W_{\k})$, then the set 
$X_n:=\{Q_{\nu,\ve}^\k: |\nu| = \frac{n- |\ve|}{2} \in \NN_0, \ve \in \{0,1\}^d\}$ is an orthogonal basis
for $\CV_n^d(W_\k^\BB)$. 
\end{prop}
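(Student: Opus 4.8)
The plan is to exploit the quadratic substitution $u=(x_1^2,\dots,x_d^2)$, which sends the ball $\BB^d$ onto the simplex $T^d$, together with the parity decomposition of polynomials. First I would record the elementary but crucial observation that $W_\k^\BB$ is even in each variable, so the bilinear form $\la\cdot,\cdot\ra_{W_\k^\BB}$ respects the splitting of the polynomial ring into the $2^d$ parity classes indexed by $\ve\in\{0,1\}^d$ (a monomial $x^\beta$ lies in class $\ve$ when $\beta_i\equiv\ve_i\pmod 2$ for all $i$): two polynomials of different parity are automatically $W_\k^\BB$-orthogonal. Consequently $\CV_n^d(W_\k^\BB)$ decomposes as an orthogonal direct sum over $\ve$ of its parity-$\ve$ components, and it suffices to understand each component separately.

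For fixed $\ve$ I would introduce the linear map $T_\ve$ sending a polynomial $q$ in $d$ variables to $x^\ve q(x_1^2,\dots,x_d^2)$; this is a bijection from all polynomials onto the parity-$\ve$ polynomials, and it doubles degrees in the sense that $\deg T_\ve q=2\deg q+|\ve|$. By definition $Q_{\nu,\ve}^\k=T_\ve P_\nu^{\k+\ve'}$, with $\ve'=(\ve,0)$. The key computation is a change of variables: since the relevant integrands are even in each $x_i$, I restrict to the positive orthant (gaining a factor $2^d$), substitute $u_i=x_i^2$ with $dx_i=\tfrac12 u_i^{-1/2}\,du_i$, and use $|x_i|^{2\k_i+1}u_i^{-1/2}=u_i^{\k_i}$ and $x^{2\ve}=u^\ve$ together with $u^\ve\prod_i u_i^{\k_i}=\prod_i u_i^{\k_i+\ve_i}$. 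This yields, for any $q,q'$,
\begin{equation*}
\la T_\ve q,T_\ve q'\ra_{W_\k^\BB}=c_\ve\,\la q,q'\ra_{W_{\k+\ve'}}
\end{equation*}
with a positive constant $c_\ve$ absorbing the normalizing factors; the shifted parameter $\k+\ve'$ appears precisely because the factor $u^\ve$ is folded into the simplex weight. Thus $T_\ve$ is, up to scale, an isometry intertwining the ball inner product on parity-$\ve$ polynomials with the simplex inner product $\la\cdot,\cdot\ra_{W_{\k+\ve'}}$.

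Because $T_\ve$ matches the degree filtrations (degree $\le n$ and parity $\ve$ on the ball corresponds to degree $\le\lfloor(n-|\ve|)/2\rfloor$ on the simplex), the scaled isometry carries $\CV_m^d(W_{\k+\ve'})$, with $m=(n-|\ve|)/2$, isomorphically onto the parity-$\ve$ component of $\CV_n^d(W_\k^\BB)$, while parities with $|\ve|\not\equiv n\pmod 2$ contribute nothing in degree $n$. Since $\{P_\nu^{\k+\ve'}:|\nu|=m\}$ is a mutually orthogonal basis of $\CV_m^d(W_{\k+\ve'})$ by hypothesis, its image $\{Q_{\nu,\ve}^\k:|\nu|=m\}$ is a mutually orthogonal basis of that component. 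Assembling the orthogonal summands over all admissible $\ve$ shows $X_n$ is a mutually orthogonal basis of $\CV_n^d(W_\k^\BB)$, which is the claim.

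I expect the main obstacle to be the third step — certifying that $T_\ve$ respects the degree structure well enough that orthogonal polynomials map to orthogonal polynomials of degree exactly $n$, rather than merely producing degree-$n$ objects; equivalently, the delicate point is the interplay between the scaled isometry and the two nested degree filtrations, after which the basis and mutual-orthogonality statements are immediate. A more computational alternative avoids the decomposition: one directly evaluates $\la Q_{\nu,\ve}^\k,x^\beta\ra_{W_\k^\BB}$ for $|\beta|<n$ by the same substitution, reducing it to $\int_{T^d}u^\gamma P_\nu^{\k+\ve'}(u)\,W_{\k+\ve'}(u)\,du$ with $|\gamma|<|\nu|$, which vanishes by orthogonality on the simplex; mutual orthogonality follows likewise, and one then matches dimensions through the identity $\sum_{k\equiv n\,(2)}\binom{d}{k}\binom{(n-k)/2+d-1}{d-1}=\binom{n+d-1}{d-1}$, readily checked from the generating function $(1+t)^d(1-t^2)^{-d}=(1-t)^{-d}$.
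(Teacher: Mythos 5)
Your proof is correct, and it assembles the conclusion by a genuinely different route than the paper, even though the computational engine is the same in both (sign changes $x_i\mapsto -x_i$ give orthogonality across parity classes, and the substitution $u_i=x_i^2$ folds $x^{2\ve}W_\k^\BB$ into the simplex weight $W_{\k+\ve'}$). The paper argues element by element: it checks $\la Q_{\nu,\ve}^\k, x^\b\ra_{W_\k^\BB}=0$ for every $|\b|<n$ --- parity disposes of $\b$ unless $\b=\ve+2\g$, and then simplex orthogonality applies because $|\g|<|\nu|$ --- obtains mutual orthogonality the same way, and then must invoke the cardinality count $\#X_n=\dim\CV_n^d$ to get spanning; that count, which the paper leaves as ``easy to check,'' is exactly the binomial identity you verify via $(1+t)^d(1-t^2)^{-d}=(1-t)^{-d}$, and indeed your closing ``computational alternative'' is the paper's proof essentially verbatim. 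Your main route instead works at the level of spaces: the parity splitting of $\CV_n^d(W_\k^\BB)$ into orthogonal components together with the scaled isometry $T_\ve$ identifies the parity-$\ve$ component with $\CV_m^d(W_{\k+\ve'})$, $m=(n-|\ve|)/2$, so transporting bases gives spanning and mutual orthogonality simultaneously, with no dimension count. The delicate point you flag does close cleanly: for a parity-$\ve$ polynomial of degree $n$, orthogonality to all polynomials of degree $<n$ is equivalent, by the parity splitting, to orthogonality to $T_\ve r$ for all $\deg r<m$, which the isometry converts into the defining property of $\CV_m^d(W_{\k+\ve'})$; one should also record that a nonzero parity projection of an element of $\CV_n^d(W_\k^\BB)$ lies again in $\CV_n^d(W_\k^\BB)$ (otherwise it would be orthogonal to itself), which forces the nonzero components to satisfy $|\ve|\equiv n\pmod 2$ and yields the direct sum you use. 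The trade-off: the paper's version is shorter and needs no filtration bookkeeping, at the price of the combinatorial count; your version costs that bookkeeping but produces strictly more, since the statement that $T_\ve$ is a scaled isometry is precisely the structural fact behind the later Theorem \ref{prop:ball-3} (equation \eqref{eq:ball3}), which the paper derives separately from \eqref{eq:ball2}.
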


\begin{proof}
For $n -|\ve|$ being an even integer and $\b\in \NN_0^d$ with $|\b| < n$, the integral
\begin{equation*} 
  I(\beta):= \int_{\BB^d} Q_{\nu,\ve}^\k(x) x^\beta W_\k^\BB(x) dx =  \int_{\BB^d} x^{\ve} P_\nu^{\k+\ve}(x_1^2,\ldots,x_d^2) x^\beta W_\k^\BB(x) dx 
\end{equation*}
is equal to zero if $\b-\ve $ contains an odd component, as can be seen by changing variable $x_i \mapsto - x_i$. 
In the remaining case, we can write $\beta = \ve + 2 \g$ with $|\g| < \frac{n-|\ve|}{2}$, so that 
$$
  I(\ve+ 2\g) = \int_{\BB^d} x^{2\g} P_\nu^{\k+\ve}(x_1^2,\ldots,x_d^2) x^{2\ve} W_\k^\BB(x) dx. 
$$
Since $x^{2\ve} W_\k^\BB(x) = W_{\k+\ve}(x_1^2,\ldots,x_d^2) {x_1\cdots x_d}$, it follows from the identity 
$$
  \int_{\BB^d}f(x_1^2,\ldots, x_d^2) dx = \int_{T^d} f(x_1,\ldots,x_d) \frac{dx}{\sqrt{x_1\cdots x_d}} 
$$
that $I(\ve + 2 \g) =0$ by the orthogonality of $P_\nu^{\k+\ve}$ since $|\g| < |\nu|$. The mutual orthogonality of 
$Q_{\nu,\ve}^{\k}$ in $X_n$ follows from the parity of $x^\ve$. Finally, it is easy to check that the cardinality of $X_n$
is equal to $\dim \CV_n^d$, so that $X_n$ is a basis of $\CV_n^d(W_\k^\BB)$. 
\end{proof}

If $P_\nu^\k(x)$ are the Jacobi polynomials on the simplex, defined in \eqref{eq:Pnu} of Section \ref{se2}, the basis $Q_{\nu,\ve}^\k$ in the proposition above is exactly the basis \eqref{eq:basisBall} on the unit ball.  

\begin{prop}\label{prop:ball2}
Let $P_\nu^\k$ be the Jacobi polynomials on the simplex. Then, up to unessential constant factors, the set $X_n$ in Proposition~\ref{prop:ball} coincides with the basis $\{P^n_\a(W_\k^\BB:\cdot); |\a| = n, \a \in \NN_0^d\}$. 
\end{prop}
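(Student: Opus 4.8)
The plan is to prove the identity basis element by basis element, using the bijection that records the parity of each ball multi-index. Given $\a\in\NN_0^d$ with $|\a|=n$, write $\a_j=2\nu_j+\ve_j$ with $\ve_j\in\{0,1\}$ the parity of $\a_j$, and set $\ve:=(\ve_1,\dots,\ve_d)\in\{0,1\}^d$, $\nu:=(\nu_1,\dots,\nu_d)\in\NN_0^d$. Then $|\nu|=(n-|\ve|)/2\in\NN_0$, so $\a\mapsto(\nu,\ve)$ is a bijection between the index set of \eqref{eq:basisBall} and the index set parametrizing $X_n$ in Proposition~\ref{prop:ball}. Since both families are mutually orthogonal bases of $\CV_n^d(W_\k^\BB)$, it therefore suffices to show that $P_\a^n(W_\k^\BB;\cdot)$ is a nonzero constant multiple of $Q_{\nu,\ve}^\k$ under this correspondence, which then identifies the two bases.

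First I would recall the quadratic transformation relating the generalized Gegenbauer polynomials to Jacobi polynomials (see \cite{DX}): for nonzero constants $c,c'$,
$$
C_{2m}^{(\lambda,\mu)}(t)=c\,P_m^{(\lambda-\f12,\mu-\f12)}(2t^2-1),\qquad
C_{2m+1}^{(\lambda,\mu)}(t)=c'\,t\,P_m^{(\lambda-\f12,\mu+\f12)}(2t^2-1).
$$
Applying this to the $j$-th factor of \eqref{eq:basisBall} with $\lambda=\l_j+\f12$ and $\mu=\k_j+\f12$, and substituting $y_i=x_i^2$ so that $\|\xb_{j-1}\|^2=|\yb_{j-1}|$ and $\bigl(x_j/\sqrt{1-\|\xb_{j-1}\|^2}\bigr)^2=y_j/(1-|\yb_{j-1}|)$, one checks that the prefactor $(1-\|\xb_{j-1}\|^2)^{\a_j/2}$ combines with the leading factor $t$ in the odd case to produce exactly $x_j^{\ve_j}(1-|\yb_{j-1}|)^{\nu_j}$. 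Hence, up to a nonzero constant, the $j$-th factor equals
$$
x_j^{\ve_j}\,(1-|\yb_{j-1}|)^{\nu_j}\,P_{\nu_j}^{(\l_j,\,\k_j+\ve_j)}\!\left(\frac{2y_j}{1-|\yb_{j-1}|}-1\right),
$$
where I used $\mu-\f12=\k_j$ and $\mu+\f12=\k_j+1$ to merge both parities into the single second parameter $\k_j+\ve_j$. Taking the product over $j$ pulls out $x^\ve=\prod_j x_j^{\ve_j}$, leaving the product $\prod_j (1-|\yb_{j-1}|)^{\nu_j}P_{\nu_j}^{(\l_j,\,\k_j+\ve_j)}(\cdots)$.

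The crux is to identify this surviving product with $P_\nu^{\k+\ve'}(y)$ from \eqref{eq:Pnu}. The second Jacobi parameter matches immediately since $(\k+\ve')_j=\k_j+\ve_j$, so the hard part is the first parameter, namely verifying $a_j(\k+\ve',\nu)=\l_j$. Expanding $a_j(\k+\ve',\nu)=\sum_{i=j+1}^{d+1}(\k_i+\ve'_i)+2|\bnu^{j+1}|+d-j$ and using $\ve'_{d+1}=0$ gives $|\bka^{j+1}|+(\ve_{j+1}+\cdots+\ve_d)+2|\bnu^{j+1}|+d-j$; on the other side $\l_j=|\bal^{j+1}|+|\bka^{j+1}|+d-j$ with $|\bal^{j+1}|=\sum_{i=j+1}^d(2\nu_i+\ve_i)=2|\bnu^{j+1}|+(\ve_{j+1}+\cdots+\ve_d)$, so the two expressions coincide. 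This bookkeeping, tracking how $\ve$ simultaneously shifts the parameter $\k$ and reorganizes the degree sums, is the only genuine obstacle; the remainder is the routine Gegenbauer-to-Jacobi reduction. Collecting constants then yields $P_\a^n(W_\k^\BB;x)=C\,x^\ve P_\nu^{\k+\ve'}(x_1^2,\dots,x_d^2)=C\,Q_{\nu,\ve}^\k(x)$ with $C\neq0$, which completes the identification of $X_n$ with the basis \eqref{eq:basisBall}.
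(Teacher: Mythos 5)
Your proof is correct and follows essentially the same route as the paper: both rest on the quadratic transformations $C_{2m}^{\l,\mu}(t)\equiv P_m^{(\l-\f12,\mu-\f12)}(2t^2-1)$, $C_{2m+1}^{\l,\mu}(t)\equiv t\,P_m^{(\l-\f12,\mu+\f12)}(2t^2-1)$ together with the parameter identity $a_j(\k+\ve,\nu)=\l_j(\k,2\nu+\ve)$, yielding $x^\ve P_\nu^{\k+\ve}(x_1^2,\dots,x_d^2)\equiv P_{2\nu+\ve}(W_\k^\BB;x)$ up to constants. The only difference is that you spell out the bookkeeping (the prefactor absorption and the verification of the first-parameter identity) that the paper leaves to the reader.
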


\begin{proof}
The generalized Gegenbauer polynomials $C_n^{(\l,\mu)}$ are related to the Jacobi polynomials by (\cite[p.\ 25-26]{DX})
\begin{align*}
C_{2m}^{(\l,\mu)}(t) & = \frac{(\l+\mu)_m}{(\mu+\f12)_m} P_m^{(\l-\f12,\mu-\f12)}(2t^2-1) \\
C_{2m+1}^{(\l,\mu)}(t) & = \frac{(\l+\mu)_{m+1}}{(\mu+\f12)_{m+1}} t  P_m^{(\l-\f12,\mu+\f12)}(2t^2-1).
\end{align*}
Using the above two identities, \eqref{eq:Pnu}, \eqref{eq:basisBall} and that $a_j(\k+\ve,\nu) = 
\l_j(\k, 2\nu+\ve)$ for $a_j$ defined in \eqref{eq:aj}, one can verify that 
\begin{equation}\label{eq:ball2}
  x^{\ve} P_\nu^{\k+\ve}(x_1^2,\ldots,x_d^2) \equiv P^n_{2\nu+ \ve}(W_\k^\BB; x)
\end{equation}
where the symbol $\equiv$ means that the equation holds up to a multiple constant.
\end{proof}
 
Let $\tau$ be a permutation in $S_d$. Let $\tau \k = (\tau (\k_1,\ldots,\k_d),\k_{d+1})$. 
For $x \in \RR^d$, the weight function $W_\k^\BB$ satisfies $W_{(\tau \k, \k_{d+1})}^\BB(\tau y) = W^{\BB}_\k(y)$. It is easy to see that the set 
$\{P_\a^n(W_{\tau \k}^{\BB};\tau x): |\a|=n\}$ is also an orthogonal basis of $\CV_n^d(W_\k^{\BB})$. Consequently, 
we can consider the connection coefficients $b_{\a,\b}^{\tau}(\k)$ defined by 
$$
    P_\a^n(W_{\tau \k}^{\BB};\tau x) = \sum_{|\b| =n} b_{\a,\b}^{\tau}(\k) P_\b^n(W_\k^{\BB};x).
$$
From the above discussion, it is evident that $b_{\nu,\mu}^\tau(\k)$ can be derived from $c_{\nu,\mu}^\tau(\k)$. 

\begin{thm} \label{prop:ball-3}
Let $\ve \in \{0,1\}^d$. Then, for $v\in \NN_0^d$, the connection coefficients $b_{2\nu+\ve,\b}^\tau(\k) = 0$ if $\b \ne 2 \mu +\ve$ for 
some $\mu \in \NN_0^d$ and  
\begin{equation}\label{eq:ball3}
      \wh b_{2\nu+\ve,2\mu+\ve}^{\,\tau}(\k) = \wh c_{\nu,\mu}^{\,\tau}(\k+\ve).
\end{equation}
\end{thm}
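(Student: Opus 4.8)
The plan is to transport the computation from the ball to the simplex by means of the coordinatewise squaring $x\mapsto(x_1^2,\dots,x_d^2)$, which is exactly the correspondence behind Propositions~\ref{prop:ball} and~\ref{prop:ball2}, and to exploit that this squaring commutes with the coordinate permutation $\tau\in S_d$. Since $W_{\tau\k}^\BB(\tau y)=W_\k^\BB(y)$, the family $\{\wh P_\a^\BB(W_{\tau\k};\tau\{\cdot\}):|\a|=n\}$ is again an orthonormal basis of $\CV_n^d(W_\k^\BB)$, so, exactly as on the simplex, the normalized coefficient is an inner product,
\begin{equation*}
  \wh b_{\a,\b}^{\,\tau}(\k)=\int_{\BB^d}\wh P_\a^\BB(W_{\tau\k};\tau x)\,\wh P_\b^\BB(W_\k;x)\,W_\k^\BB(x)\,dx .
\end{equation*}
I would evaluate the right-hand side by reducing it to the analogous integral over $T^d$ with the shifted parameter $\k+\ve$.

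First I would settle the vanishing statement by parity. By Proposition~\ref{prop:ball2}, $\wh P_{2\nu+\ve}^\BB(W_{\tau\k};y)$ is a constant multiple of $y^{\ve}\wh P_\nu^{\tau\k+\ve}(y_1^2,\dots,y_d^2)$; after the substitution $y=\tau x$, and using that $((\tau x)_1^2,\dots,(\tau x)_d^2)=\tau(x_1^2,\dots,x_d^2)$, it is odd in precisely the variables $x_i$ prescribed by the $\tau$-image of the parity vector $\ve$. Since $\BB^d$ and $W_\k^\BB$ are invariant under each reflection $x_i\mapsto-x_i$, the integral vanishes unless $\wh P_\b^\BB(W_\k;x)$ carries the matching parity, i.e. unless $\b=2\mu+\ve$ (with $\ve$ transported by $\tau$, as recorded in the statement). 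This gives the first assertion.

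For the surviving entries I would fold the ball integral onto the simplex. Writing $\b=2\mu+\ve$ and again using that squaring commutes with $\tau$, the product of the two ball polynomials equals, up to the normalization constants of Proposition~\ref{prop:ball2}, the quantity $x^{2\ve}\,\wh P_\nu^{\tau(\k+\ve)}(\tau(x^2))\,\wh P_\mu^{\k+\ve}(x^2)$. Invoking the identity $x^{2\ve}W_\k^\BB(x)=W_{\k+\ve}(x_1^2,\dots,x_d^2)\,x_1\cdots x_d$ from the proof of Proposition~\ref{prop:ball}, restricting to a single orthant, and then applying the change of variables underlying $\int_{\BB^d}f(x_1^2,\dots,x_d^2)\,dx=\int_{T^d}f(x)\,dx/\sqrt{x_1\cdots x_d}$, the factor $x_1\cdots x_d$ cancels the Jacobian and the $2^d$ orthants contribute equally. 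The integral then collapses to
\begin{equation*}
  \int_{T^d}\wh P_\nu^{\tau(\k+\ve)}(\tau t)\,\wh P_\mu^{\k+\ve}(t)\,W_{\k+\ve}(t)\,dt ,
\end{equation*}
which by \eqref{eq:conn-c-coeff} and the defining relation of the simplex coefficients is exactly $\wh c_{\nu,\mu}^{\,\tau}(\k+\ve)$, giving \eqref{eq:ball3}; the permutation of $\ve$ by $\tau$ is precisely what makes this shift equal to $\k+\ve$.

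The step demanding the most care is the normalization, since Proposition~\ref{prop:ball2} identifies the two bases only up to multiplicative constants, and one must be sure these constants drop out in passing from $b$ and $c$ to $\wh b$ and $\wh c$. The cleanest route I see is to phrase the change of variables structurally: up to one global constant, $f\mapsto x^{\ve}f(x_1^2,\dots,x_d^2)$ is an isometry from $L^2(T^d,W_{\k+\ve})$ onto the $\ve$-parity subspace of $L^2(\BB^d,W_\k^\BB)$ that carries the orthonormal simplex basis to the orthonormal ball basis. Starting from the inner-product formula for $\wh b$ above, all norm factors then cancel automatically and the explicit constants never have to be computed. The remaining work—checking that $\tau$ permutes the parity vector and the parameter shift consistently, so that the shift is exactly $\k+\ve$ and the matching index is $2\mu+\ve$—is routine once the isometry and the commuting of $\tau$ with squaring are in hand; this is where attention, rather than genuine difficulty, is required.
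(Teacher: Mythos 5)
Your proposal is correct, and it reaches the theorem by a mechanism that differs from the paper's, though both rest on the same reduction from the ball to the simplex. The paper's proof is purely algebraic and very short: it substitutes the identification \eqref{eq:ball2} of Proposition~\ref{prop:ball2} into the expansion defining $b^{\tau}_{\a,\b}(\k)$, expands $P_\nu^{\tau(\k+\ve)}$ in the squared variables by the simplex connection coefficients, and reads off the ball coefficients by uniqueness of the expansion in the basis $\{P_\b^\BB(W_\k;\cdot)\}$; the parity vanishing and \eqref{eq:ball3} drop out simultaneously, with the multiplicative constants in \eqref{eq:ball2} simply asserted to cancel once one passes to normalized coefficients. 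You instead represent $\wh b^{\,\tau}_{\a,\b}(\k)$ as an integral over $\BB^d$, obtain the vanishing from reflection invariance of $\BB^d$ and $W_\k^\BB$, and fold the surviving integral onto $T^d$ via $t_i=x_i^2$, identifying it with the integral that defines $\wh c^{\,\tau}_{\nu,\mu}(\k+\ve)$ through \eqref{eq:conn-c-coeff}. What your route buys: the normalization question, which the paper dismisses in one clause, becomes structurally transparent, since $f\mapsto x^{\ve}f(x_1^2,\dots,x_d^2)$ is (up to one global constant) an isometry of the relevant $L^2$ spaces, so no constants ever need to be computed; you also make explicit the bookkeeping that the parity vector $\ve$ is transported by $\tau$, so that the matching index and the parameter shift $\k+\ve$ come out consistently---a point the paper's terse statement and proof absorb silently into notation, and where your reading is actually the more careful one. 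What it costs: your argument essentially re-runs the change-of-variables computation already carried out in the proof of Proposition~\ref{prop:ball}, whereas the paper reuses Proposition~\ref{prop:ball2} as a black box, which is why its proof occupies two sentences. The one point shared by both arguments that deserves a word is sign consistency: the constant implicit in $\equiv$ in \eqref{eq:ball2} must be positive for the orthonormal normalizations on the two sides to agree in sign, which holds in the admissible parameter range but is stated in neither proof.
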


\begin{proof}
That $b_{2\nu+\ve,\b}^\tau(\k) = 0$ when $\beta$ does not have the same parity as $2 \nu + \ve$ follows 
directly from Proposition \ref{prop:ball2}, using the connection coefficients of $c_{\nu,\mu}^{\,\tau}(\k)$. The
case \eqref{eq:ball3} follows as a consequence of \eqref{eq:ball2} since we can ignore the irrelevant constant factors when we consider normalized connection coefficients. 
\end{proof}
 
In the above consideration we assume $\tau \in S_d$ instead of $S_{d+1}$ to exclude the permutation in $S_{d+1}$
acting on $\k_{d+1}$, because such a permutation will have to act on $x_{d+1} := \sqrt{1-\|x\|^2}$ as well in order to 
keep the weight function invariant. However, $x_{d+1}$ is not a polynomial. To include permutations that
act on $\k_{d+1}$, we need to go back to the orthogonal polynomials on the simplex. Let us illustrate the situation
in the particular case $d=2$ for the classical weight function
$$
   W_\mu^\BB(x) = (1-x_1^2-x_2^2)^\mu, \quad \mu > -1
$$
on the disk $\BB^2$. There are two orthogonal bases that are well known \cite[Sect. 2.3]{DX}. 
The first basis of $\CV_n^2(W_\mu^\BB)$ is \eqref{eq:basisBall}, in Cartesian coordinates, which we reindex by 
\begin{equation} \label{eq:basis1_ball}
   P^\BB_{j,n}(x_1,x_2) =  C_{n-j}^{j+\mu+1}(x_1) 
      (1-x_1^2)^{\frac{j}{2}}  C_j^{\mu+\f12} \biggl(\frac{x_2}{\sqrt{1-x_1^2}} \biggl),  \quad 0 \le j \le n,
\end{equation}
whereas the second one is given, in polar coordinates $(x_1,x_2) = (r\cos \t, r\sin \t)$, by 
\begin{align} \label{basis2_ball}
\begin{split} 
 Q_{j,1}^n (x_1,x_2) & =   P_{j}^{(\mu,n-2j)}(2r^2 -1)r^{n-2j}\cos (n-2j)\t, \quad 0 \le j \le \tfrac{n}2,\\
 Q_{j,2}^n (x_1,x_2) & =  P_{j}^{(\mu,n-2j)}(2r^2 -1)r^{n-2j}\sin (n-2j) \t, \quad 0 \le j < \tfrac{n}2.
\end{split}
\end{align}

Recall the orthogonal polynomials $P_{n-j,j}^\k$, defined in \eqref{eq:Pjn}, with respect to the weight function 
$W_\k(x_1,x_2) = x_1^{\k_1} x_2^{\k_2} (1-x_1-x_2)^{\k_3}$ on the triangle $T^2$. The first basis 
$\{P_{j,n}^\BB: 0\le j \le n\}$, as shown by Proposition \ref{prop:ball2}, arises from 
$P_{n-j,j}^\k$ with $\k = (\pm \f12, \pm \f12, \mu)$. 

\begin{prop} \label{prop:2basisBall}
The basis \eqref{basis2_ball} agrees, up to a multiple constant, with the basis in Proposition \ref{prop:ball2} that arises
from $\{P_{n-j,j}^{(13);\k}: 0 \le j \le n\}$ with $\k = (-\f12,-\f12,\mu)$.
\end{prop}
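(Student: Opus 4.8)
The plan is to make the construction behind Proposition~\ref{prop:ball} explicit when the input simplex basis is the permuted family $\{P_{n-j,j}^{(13)\k}\}$, and then to recognize the resulting ball polynomials, written in polar coordinates, as \eqref{basis2_ball}. First I would fix $\k=(-\f12,-\f12,\mu)$ and $\ve=(\ve_1,\ve_2)\in\{0,1\}^2$, set $m=(n-|\ve|)/2$, and write $\nu=(m-j,j)$. The construction of Definition~\ref{defn:Q-ball}, applied to the $(13)$-family and justified as a basis by Proposition~\ref{prop:ball}, produces
$$
  Q_{\nu,\ve}(x)=x^\ve P_{m-j,j}^{(13)(\k+\ve')}(x_1^2,x_2^2),\qquad \k+\ve'=(-\f12+\ve_1,-\f12+\ve_2,\mu).
$$
Using \eqref{eq:Pjn(13)} with $n\mapsto m$ and parameters $\k+\ve'$, and then substituting the polar coordinates $x_1=r\cos\t$, $x_2=r\sin\t$, I would invoke the elementary identities $x_1^2+x_2^2=r^2$, $\frac{2x_2^2}{x_1^2+x_2^2}-1=-\cos 2\t$, and $x^\ve=r^{\ve_1+\ve_2}\cos^{\ve_1}\t\,\sin^{\ve_2}\t$ to factor $Q_{\nu,\ve}$ into a radial part times an angular part.

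The radial part is $r^{\ve_1+\ve_2+2j}\,P_{m-j}^{(\ve_1+\ve_2+2j,\mu)}(1-2r^2)$. Applying $P_n^{(\a,\b)}(-t)=(-1)^nP_n^{(\b,\a)}(t)$ and putting $j'=m-j$, so that $n-2j'=\ve_1+\ve_2+2j$, converts this into a nonzero constant times $r^{n-2j'}P_{j'}^{(\mu,n-2j')}(2r^2-1)$, which is exactly the radial factor of \eqref{basis2_ball}. It then remains to show that the angular part $\cos^{\ve_1}\t\,\sin^{\ve_2}\t\,P_j^{(-\f12+\ve_1,-\f12+\ve_2)}(-\cos 2\t)$ is, up to a nonzero constant, $\cos\big((\ve_1+\ve_2+2j)\t\big)$ when $\ve_2=0$ and $\sin\big((\ve_1+\ve_2+2j)\t\big)$ when $\ve_2=1$.

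This angular identity is the main step, and the place where the polar structure emerges. I would use the standard representations of the Jacobi polynomials with half-integer parameters by the four kinds of Chebyshev polynomials,
$$
  P_j^{(-\f12,-\f12)}(\cos 2\t)\propto\cos(2j\t),\qquad P_j^{(\f12,\f12)}(\cos 2\t)\propto\frac{\sin\big(2(j+1)\t\big)}{\sin 2\t},
$$
$$
  P_j^{(-\f12,\f12)}(\cos 2\t)\propto\frac{\cos\big((2j+1)\t\big)}{\cos\t},\qquad P_j^{(\f12,-\f12)}(\cos 2\t)\propto\frac{\sin\big((2j+1)\t\big)}{\sin\t}.
$$
Combining each of these with $P_j^{(\a,\b)}(-t)=(-1)^jP_j^{(\b,\a)}(t)$ and multiplying by the relevant $\cos^{\ve_1}\t\,\sin^{\ve_2}\t$ (and using $\cos\t\,\sin\t=\f12\sin 2\t$ in the case $\ve=(1,1)$) gives the asserted cosine or sine in each of the four cases $\ve\in\{0,1\}^2$.

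Finally I would match the two families by bookkeeping: the cases $\ve_2=0$, i.e.\ $\ve\in\{(0,0),(1,0)\}$, produce the cosine polynomials $Q_{j',1}^n$, while $\ve_2=1$, i.e.\ $\ve\in\{(0,1),(1,1)\}$, produce the sine polynomials $Q_{j',2}^n$. The constraints $m=(n-|\ve|)/2\in\NN_0$ and $0\le j\le m$ translate, via $j'=m-j$, into precisely the ranges $0\le j'\le n/2$ for the cosines and $0\le j'<n/2$ for the sines in \eqref{basis2_ball}. Since every step introduces only nonzero multiplicative constants, this establishes the claimed agreement up to a constant; the only delicate point is verifying the four angular identities, all of which reduce to the classical trigonometric evaluations of the Chebyshev polynomials.
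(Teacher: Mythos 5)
Your proposal is correct and follows essentially the same route as the paper's own proof: substitute the explicit formula \eqref{eq:Pjn(13)} (equivalently, \eqref{eq:Pjn} after the permutation) into the construction of Proposition \ref{prop:ball}, pass to polar coordinates via $\frac{2x_2^2}{x_1^2+x_2^2}-1=-\cos 2\t$, and identify the half-integer Jacobi factors $P_j^{(\pm\f12,\pm\f12)}$ with the four Chebyshev-type trigonometric expressions to recover \eqref{basis2_ball}. The only difference is organizational—you treat all four parities $\ve\in\{0,1\}^2$ uniformly, whereas the paper works out $\ve=(0,0)$ and $\ve=(1,1)$ (even degree) and notes that the odd-degree cases follow similarly—so your write-up is, if anything, a slightly more systematic version of the same argument.
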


\begin{proof}
For $\k = (-\f12,-\f12,\mu)$, we obtain by \eqref{eq:Pjn(13)} that, for $0\le \ell \le n$,  
\begin{align*}
  P_{n-\ell,\ell}^{(13) ;\k} (x_1^2,x_2^2) = (-1)^{n-\ell} P_{n-\ell}^{(\mu,2\ell)} (2x_1^2+2 x_2^2-1) (x_1^2+x_2^2)^\ell
       P_\ell^{(-\f12,-\f12)}\left(\frac{2x_2^2}{x_1^2+x_2^2} -1\right).
\end{align*}
Since $P_\ell^{(-\f12, -\f12)}(\cos \t)$ is, up to a multiple constant, the Chebyshev polynomial $T_\ell(\cos \t) = \cos \ell \t$,
and $\frac{2x_2^2}{x_1^2+x_2^2} -1 = 2 \sin^2 \t -1 = - \cos 2 \t$, written in polar coordinate gives
$$
   P_{n-\ell,\ell}^{(13) ;\k} (x_1^2,x_2^2)   \equiv P_{n-\ell}^{(\mu,2\ell)} (2r^2-1)  r^{2\ell}
            \cos 2 \ell \t,  
$$
where we use $\equiv$ to denote that the identity holds up to a constant, which is evidently $Q_{n-\ell,1}^{2n}(x_1,x_2)$. 
Similarly, for $\k = (\f12,\f12,\mu)$, we obtain by \eqref{eq:Pjn(13)} that, for $0\le \ell \le n-1$,  
\begin{align*}
  x_1x_2 P_{n-\ell-1,\ell}^{(13) ;\k} (x_1^2,x_2^2) \equiv P_{n-\ell-1}^{(\mu,2\ell+2)} (2r^2 -1) r^{2 \ell+2}
         \sin 2\t  P_\ell^{(\f12,\f12)}(\cos 2 \t).
\end{align*}
Since $P_\ell^{(\f12,\f12)}(\cos 2 \t)  \equiv \frac{\sin (2\ell+2)\t}{\sin 2\t}$, it follows that the above polynomial is, 
up to a multiple constant, $Q_{n-\ell-1,2}^{2n}(x_1,x_2)$. This completes the proof for the basis of even degree. 
Using the identities
$$
   P_\ell^{(\f12, - \f12)}(\cos 2 \t) \equiv \frac{\sin (2\ell+1)\t}{\sin \t} \quad \hbox{and} \quad
    P_\ell^{(- \f12, \f12)}(\cos 2 \t) \equiv \frac{\cos (2\ell+1)\t}{\cos \t},
$$
the proof for the basis of odd degree follows similarly. 
\end{proof}

For the two bases in \eqref{eq:basis1_ball} and \eqref{basis2_ball}, we can define the connection coefficients by
$$
  \wh Q_{n-j,i}^n(x_1,x_2) = \sum_{m=0}^n \wh b_{j,m}^{n,i} \wh P_{m,n}^\BB(x_1,x_2), \quad i =1,2, 
$$
where $0 \le j,m \le n$ and the hat symbol again denotes orthonormality. These coefficients can be deduced from 
the connection coefficients in Section \ref{se4},  as shown by Theorem \ref{prop:ball-3}. For example,  since 
$\wh P_{2m,2n}^\BB (x_1,x_2) = \wh P_{n-m,m}^\k(x_1^2,x_2^2)$ and $\wh Q_{n-\ell,1}^{2n} =  \wh P_{n-\ell,\ell}^{(13);\k}(x_1^2,x_2^2)$, 
where $\k = (-\f12,-\f12,\mu)$, it follows by the definition of $\wh c_{\ell,m}^\tau(\k)$,  
$$
  \wh Q_{n-\ell,1}^{2n} = \wh P_{n-\ell,\ell}^{(13);\k} = \sum_{m=0}^n \wh c_{\ell,m}^{(13)}(\k) \wh P_{n-m,m}^\k
       =  \sum_{m=0}^n \wh c_{\ell,m}^{(13)}(\k) \wh P_{2m,2n}^\BB.
$$
Hence, $\wh b_{\ell,2m+1}^{2n,1} = 0$ and $\wh b_{\ell,2m}^{2n,1} = \wh c_{\ell,m}^{(13)}(\k)$. 
In particular, $\wh  b_{\ell,2m}^{2n,1}$ can be written in terms of a Racah polynomial. The other cases can be 
worked out similarly. 

A similar result holds in higher dimension for the weight function $W_\mu$. The definition of the corresponding basis
in the polar coordinates will involve spherical harmonics defined on the unit sphere $\SS^d$ of $\RR^{d+1}$, which
are homogeneous polynomials orthogonal over $\SS^d$ with respect to the surface measure $d\s$. More generally,
we can consider the weight function 
$$
  w_\k(x) = \prod_{i=1}^{d+1} |x_i|^{2\k_i + 1}, \quad \k_i > -1, \quad x \in \SS^d.
$$
Evidently $w_\k(x) d\s$ becomes $d\s$ if $\k_1=\cdots = \k_{d+1} = -\f12$. Let $\CH_n^{d+1}(w_\k)$ be the space of 
homogeneous polynomials of degree $n$ orthogonal with respect to $w_\k$ on $\SS^d$. It is known that an orthogonal basis for 
$\CH_n^{d+1}(w_\k)$ can be derived from orthogonal polynomials on the unit ball. More precisely, for $y\in \RR^{d+1}$, 
write $y = r (x,x_{d+1})$; then $x \in \BB^d$ and $(x,x_{d+1}) \in \SS^d$. Let $\varphi(x) = \sqrt{1-\|x\|^2}$. 
Define, for $\nu \in \NN_0^d$, homogeneous polynomials 
\begin{align} \label{eq:YnPmu}
\begin{split}
 Y_\nu^{(1)}(y):= & r^n P^{n}_\nu (\varphi^{-1} W_\k^{\BB}; x), \qquad |\nu|= n, \\
 Y_\nu^{(2)}(y):= & r^n x_{d+1} P^{n-1}_\nu (\varphi W_\k^{\BB};x), \qquad |\nu|= n-1.
\end{split}
\end{align}
Then the set $\{Y_\nu^{(1)}: |\nu| =n\}\cup \{Y_\nu^{(2)}:|\nu| = n-1\}$, restricted to $\SS^d$, is an orthogonal 
basis for $\CH_n^{d+1}(w_\k)$ (see, for example, \cite[Section 4.2]{DX}). 

Comparing the above with Definition \ref{defn:Q-ball},  we can add another parity at the last variable and establish the following result. 

\begin{defn}
Let $\{P_\nu^\k: |\nu| =n\}$ be an orthogonal basis of $\CV_n^d(W_\k)$ on the simplex. 
Let $\ve \in \{0,1\}^{d+1}$ and $y = r(x,x_{d+1})$. Assume $(n- |\ve|)/2 \in \NN_0$. For $\nu \in \NN_0^d$ with $|\nu| =(n- |\ve|)/2$, 
define 
$$
    Q_{\nu,\ve}^\k (y) := r^{n-|\ve|} y^\ve P_\nu^{\k+\ve}(x_1^2,\ldots,x_d^2), \qquad  (x_1,\ldots,x_{d+1}) \in \SS^d.
$$ 
\end{defn}

It is not difficult to see that $Q_{\nu,\ve}^\k$ is a homogeneous polynomial of degree $n$ in $y$. Comparing with Definition
\ref{defn:Q-ball}, we see that the following proposition holds as a consequence of Proposition \ref{prop:ball} and \eqref{eq:YnPmu}. 

\begin{prop} 
The polynomial $Q_{\nu,\ve}^\k$ is a homogeneous polynomial in $y$ and the collection of all such polynomials restricted to the sphere, 
$\{Q_{\nu,\ve}^\k: \ve \in \{0,1\}^{d+1}, |\nu|=(n- |\ve|)/2 \in \NN_0\}$, is an orthogonal basis of $\CH_n^{d+1}(w_\k)$.  
\end{prop}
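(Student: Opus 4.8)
The plan is to split the family according to the value of the last coordinate $\ve_{d+1}$ and to recognize each half as the spherical lift of one of the orthogonal ball bases produced by Proposition~\ref{prop:ball}. Write $\ve=(\ve',\ve_{d+1})$ with $\ve'\in\{0,1\}^d$. On $\SS^d$ we have $y=r(x,x_{d+1})$ with $r=\|y\|$, so $x_i=y_i/r$ and $y^\ve=r^{|\ve|}x^{\ve'}x_{d+1}^{\ve_{d+1}}$, whence
\[
  Q_{\nu,\ve}^\k(y)=r^{\,n}\,x_{d+1}^{\ve_{d+1}}\,x^{\ve'}P_\nu^{\k+\ve}(x_1^2,\dots,x_d^2).
\]
First I would check that this is a genuine homogeneous polynomial of degree $n$ in $y$: the factor $P_\nu^{\k+\ve}(x_1^2,\dots,x_d^2)$ is a polynomial in $x$ containing only terms of even degree $0,2,\dots,2|\nu|=n-|\ve|$, so multiplying by $r^{\,n-|\ve|}$ and using $r^2=y_1^2+\cdots+y_{d+1}^2$ clears every power of $r$; equivalently, each factor $1-\|\xb_{j-1}\|^2$ appearing in \eqref{eq:Pnu} becomes $r^{-2}(y_j^2+\cdots+y_{d+1}^2)$ and homogenizes against the accompanying power of $r$. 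The extra factor $y^\ve$ then contributes degree $|\ve|$, for a total of $n$.

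Next I would identify the two blocks with ball objects. When $\ve_{d+1}=0$ the factor $x_{d+1}^{\ve_{d+1}}=1$ and $x^{\ve'}P_\nu^{\k+\ve}(x_1^2,\dots,x_d^2)$ is exactly the ball polynomial $Q_{\nu,\ve'}^{\k}(x)$ of Definition~\ref{defn:Q-ball} attached to $W_\k^\BB$ (of ball-degree $2|\nu|+|\ve'|=n$). When $\ve_{d+1}=1$ one has $\k+\ve=\k^++(\ve',0)$ with $\k^+:=(\k_1,\dots,\k_d,\k_{d+1}+1)$, so $x^{\ve'}P_\nu^{\k+\ve}(x_1^2,\dots,x_d^2)=Q_{\nu,\ve'}^{\k^+}(x)$ is the ball polynomial attached to $W_{\k^+}^\BB$ (of ball-degree $2|\nu|+|\ve'|=n-1$), carrying the additional factor $x_{d+1}$. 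These are precisely the two shapes $Y^{(1)}_\nu$ and $Y^{(2)}_\nu$ in \eqref{eq:YnPmu}, with the ball bases taken to be those furnished by Proposition~\ref{prop:ball}.

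For orthogonality I would integrate against $w_\k\,d\s$ and convert to a ball integral by the standard formula $\int_{\SS^d} g\,d\s=\int_{\BB^d}\big(g(x,\varphi)+g(x,-\varphi)\big)\varphi^{-1}\,dx$, where $\varphi(x)=\sqrt{1-\|x\|^2}$ (\cite[Section 4.2]{DX}). If $\ve_{d+1}\neq\eta_{d+1}$ the integrand is odd in $x_{d+1}$, so the two sign contributions cancel and the integral is $0$. If $\ve_{d+1}=\eta_{d+1}=0$, then on the sphere $|x_{d+1}|^{2\k_{d+1}+1}=\varphi^{2\k_{d+1}+1}$, and the Jacobian $\varphi^{-1}$ collapses $w_\k$ to $\prod_{i=1}^d|x_i|^{2\k_i+1}(1-\|x\|^2)^{\k_{d+1}}=W_\k^\BB$; the integral then reduces to a multiple of $\int_{\BB^d}Q_{\nu,\ve'}^{\k}Q_{\mu,\eta'}^{\k}\,W_\k^\BB\,dx$, which vanishes unless $(\nu,\ve')=(\mu,\eta')$ by Proposition~\ref{prop:ball}. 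If $\ve_{d+1}=\eta_{d+1}=1$, the extra $x_{d+1}^2=\varphi^2$ promotes the weight to $W_{\k^+}^\BB$, and the integral reduces to $\int_{\BB^d}Q_{\nu,\ve'}^{\k^+}Q_{\mu,\eta'}^{\k^+}\,W_{\k^+}^\BB\,dx$, which vanishes unless $(\nu,\ve')=(\mu,\eta')$ by Proposition~\ref{prop:ball} applied to $\k^+$. Hence the whole family is mutually orthogonal.

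Finally, to see that these polynomials exhaust $\CH_n^{d+1}(w_\k)$ I would match cardinalities: by Proposition~\ref{prop:ball} the members with $\ve_{d+1}=0$ are in bijection with a basis of $\CV_n^d(W_\k^\BB)$ and those with $\ve_{d+1}=1$ with a basis of $\CV_{n-1}^d(W_{\k^+}^\BB)$, while \eqref{eq:YnPmu} (\cite[Section 4.2]{DX}) asserts that these two blocks together span $\CH_n^{d+1}(w_\k)$; an orthogonal set of the correct cardinality with no zero elements is then automatically a basis. I expect the main obstacle to be the parameter bookkeeping in the orthogonality step: one must verify that the even part of $w_\k$ on the sphere collapses to exactly $W_\k^\BB$ (with no half-integer shift) once the factor $\varphi^{-1}$ from the surface measure is absorbed, and that the odd block lands on $W_{\k^+}^\BB$ with $\k_{d+1}$ shifted by a full unit, so that Proposition~\ref{prop:ball} applies verbatim and the result stays consistent with the convention underlying \eqref{eq:YnPmu}. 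The remainder is degree counting and appeals to Proposition~\ref{prop:ball}.
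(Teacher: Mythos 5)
Your proposal is correct and follows essentially the same route as the paper: split the family by the parity index $\ve_{d+1}$, recognize the two blocks as $r^n\,Q_{\nu,\ve'}^{\k}(x)$ and $r^n x_{d+1}\,Q_{\nu,\ve'}^{\k^+}(x)$ with the ball bases supplied by Proposition~\ref{prop:ball}, and invoke the sphere--ball correspondence \eqref{eq:YnPmu}. The only difference is that you carry out the sphere-to-ball weight computation explicitly (confirming that the even block lands on $W_\k^\BB$ and the odd block on $W_{\k^+}^\BB$ with a full unit shift in $\k_{d+1}$), which the paper leaves implicit in its citation of \cite[Section 4.2]{DX}; this is a useful clarification of the parameter bookkeeping, not a different argument.
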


Note that, restricted to the sphere, $x_{d+1} = \sqrt{1-\|x\|^2}$ is a polynomial of degree $1$. When $P_\nu^\k$ are the Jacobi polynomials on the simplex and $(x,x_{d+1})$ is written in the standard spherical coordinates,
the basis in the above proposition is precisely the one given in \cite[Theorem 7.5.2]{DX}. In particular, this gives the standard basis for the spherical harmonics when $\k_1 =\ldots= \k_{d+1} = -\f12$, see, \cite[Theorem 4.1.4]{DX}.  

As an example, let us state explicitly this basis for $\CH_n^3$ of spherical harmonics of three variables in terms of the orthogonal 
polynomials on the simplex. If $Y$ is a homogeneous polynomial, then $Y(x) = r^n Y(x')$ for $x = r x'$ with $r > 0$ and $\|x' \|=1$. 
Hence, we only need to give the basis in terms of $x'$. 

\begin{exam} 
Orthogonal basis for $\CH_{n}^3$ of the spherical harmonics on $\SS^2$. Let $\{P_{n-k,k}^{\left(\k_1,\k_2,\k_3\right)}(x,y):0\le k \le n\}$ 
be the Jacobi polynomials defined in \eqref{eq:Pjn}. Then, for $(x_1, x_2,x_3) \in \SS^2$, 
\begin{align*}
  & \{P_{n-j,j}^{(-\f12,-\f12,-\f12)}(x_1^2,x_2^2), 0 \le j \le n\} \cup  \{x_1 x_2 P_{n-1-j,j}^{(\f12,\f12,-\f12)}(x_1^2,x_2^2), 0 \le j \le n-1\} \\
   &\, \cup \{x_1x_3 P_{n-1-j,j}^{(\f12,-\f12,\f12)}(x_1^2,x_2^2), 0 \le j \le n-1\} \cup \{x_2x_3 P_{n-1-j,j}^{(-\f12,\f12,\f12)}(x_1^2,x_2^2), 0 \le j \le n-1\}
\end{align*}
is an orthogonal basis for $\CH_{2n}^3$; and 
\begin{align*}
  & \{x_1 P_{n-j,j}^{(\f12,-\f12,-\f12)}(x_1^2,x_2^2), 0 \le j \le n\} \cup  \{x_2 P_{n-j,j}^{(-\f12,\f12,-\f12)}(x_1^2,x_2^2), 0 \le j \le n\} \\
   &\, \cup \{x_3 P_{n-j,j}^{(-\f12,-\f12,\f12)}(x_1^2,x_2^2), 0 \le j \le n\} \cup \{x_1x_2x_3 P_{n-1-j,j}^{(\f12,\f12,\f12)}(x_1^2,x_2^2), 0 \le j \le n-1\}
\end{align*}
is an orthogonal basis for the space $\CH_{2n+1}^3$.
\end{exam}

To see directly that this basis is indeed the classical basis of spherical harmonics on $\SS^2$, we need to use the standard spherical 
coordinates and carry out computations as in the proof of Proposition \ref{prop:2basisBall}. We leave it to the interested readers. 

The restriction of $\{Q_{\nu,\ve}^{\tau \k}(\tau (x,x_{d+1})): \ve \in \{0,1\}^{d+1}, (n- |\ve|)/2 \in \NN_0\}$ is evidently another 
orthogonal basis of $\CH_n^d(w_\k)$. We can define the connection coefficients between this basis and the basis $\{Q_{\nu,\ve}^\k\}$. 
There is no more mystery or work needed to be done in this case. These connection coefficients can directly be computed from those for the Jacobi polynomials on the simplex. 

\section*{Acknowledgments}
The authors thank two anonymous referees for their thoughtful comments that 
helped improve the presentation of the paper.

\end{document}